\documentclass[12pt]{amsart}
\usepackage{amsmath}
\usepackage{amsfonts}
\usepackage{graphics}
\usepackage{epsfig}
\usepackage{amssymb}
\usepackage{amscd}
\usepackage[all]{xy}
\usepackage{latexsym}
\usepackage{graphicx}
\usepackage{multirow}
\usepackage{geometry}

\theoremstyle{plain}
\newtheorem{thm}{Theorem}[section]
\newtheorem{lem}[thm]{Lemma}
\newtheorem{cor}[thm]{Corollary}
\newtheorem{prop}[thm]{Proposition}

\theoremstyle{definition}
\newtheorem{defn}[thm]{Definition}

\theoremstyle{remark}
\newtheorem*{rem}{Remark}

\newtheorem{ass}[thm]{Assumption}

\numberwithin{equation}{section} \numberwithin{figure}{section}

\renewcommand*{\to}{\rightarrow}

\newcommand{\mb}[1]{\mathbb{#1}} 

\newcommand{\mc}[1]{\mathcal{#1}}

\newcommand{\id}{\operatorname{id}}

\usepackage{url}
\usepackage{pgfplots}
\pgfplotsset{compat=1.18}
\usetikzlibrary{calc} 
\usepackage{mathtools}
\usepackage{tikz-cd}

\title[Weyl Curves and Zeta Determinants]
{Weyl Curves and Zeta Determinants of Conic Laplacians
on Riemann Surfaces}

\begin{document}

\author{Jia-Ming (Frank) Liou}
\address{Department of Mathematics\\
National Cheng Kung University\\
No.1, University Road, Tainan City 701, Taiwan\\ fjmliou@mail.ncku.edu.tw}

\begin{abstract}
We study self-adjoint realizations of conic Laplacians on compact
Riemann surfaces with radial conic metrics whose cone angles are
integral multiples of \(2\pi\). Their critical asymptotic coefficients
span a finite-dimensional space with a nondegenerate skew-Hermitian
Green form, whose Lagrangian subspaces parametrize the self-adjoint
realizations. We
construct the associated Weyl functions and derive Kre\u{\i}n's
resolvent formula, a resolvent trace identity, and, under explicit
zeta-regularity hypotheses, a comparison formula for
positive-spectrum zeta determinants. The boundary data of formal
solutions define a holomorphic Weyl curve in the Grassmannian, with
the Weyl functions as its local graph coordinates. Its real
restriction is a positive curve in the Lagrangian Grassmannian, and
its tangent form is identified with the \(L^2\)-inner product through
the Poisson operator. We further identify
the boundary determinants with the transition functions of the
determinant line bundle induced by the Weyl curve.
\end{abstract}

\maketitle

\noindent\textbf{Keywords.}
Conic Laplacians; self-adjoint extensions; symplectic boundary data;
Lagrangian Grassmannians; Weyl functions; Weyl curves; Kre\u{\i}n's
resolvent formula; positive-spectrum zeta determinants; determinant
line bundles.
\section{Introduction}

The study of elliptic operators on spaces with conic singularities
has a long history, tracing back to Weyl's limit-point/limit-circle
analysis of singular Sturm--Liouville operators and to the abstract
theory of self-adjoint extensions of symmetric operators; see
\cite{WH,vnj}. In geometric analysis, conic singularities have been
studied through the spectral theory of singular Riemannian spaces,
beginning with Cheeger's work, and through the analysis of Fuchs-type
differential operators and elliptic cone operators; see, for example,
\cite{cj,cj2,SBW,lm}. In these theories, the leading asymptotic
behavior of functions near the singular set determines a
finite-dimensional symplectic boundary space, and self-adjoint
realizations are described by Lagrangian boundary conditions on this
space; see \cite{gjb}. The geometry and spectra of closed extensions
of elliptic cone operators were studied from a Grassmannian viewpoint
by Gil, Krainer, and Mendoza \cite{GilKrainerMendoza2007}.

In extension theory, Kre\u{\i}n-type formulas express resolvent
differences in terms of Weyl functions; see \cite{dva}. For elliptic
boundary value problems, these functions often appear as
Dirichlet-to-Neumann type operators \cite{bjlm,bjlm2}. Zeta
determinants on conic manifolds were studied in \cite{LP,kk}, while
comparison, gluing, and relative determinant formulas were developed
in \cite{FR,bd,MW}. Determinant lines over Grassmannians of elliptic
boundary conditions and their relation to zeta and boundary Fredholm
determinants were studied by Scott and Wojciechowski
\cite{ScottWojciechowski1997,ScottWojciechowski2000}; see also
\cite{ScottRelativeZeta}.

The work most closely related to the present paper is that of
Hillairet and Kokotov \cite{HL}, who studied Euclidean surfaces with
conical singularities and used Kre\u{\i}n's formula, together with the
associated \(S\)-matrix, to derive comparison formulas for the
zeta-regularized determinants of non-Friedrichs self-adjoint
extensions of the Laplacian. Our approach is
complementary: rather than using the \(S\)-matrix as the main
boundary object, we formulate the resolvent and determinant
comparison problems in terms of a finite-dimensional boundary
symplectic space and its Weyl functions.

We consider radial conic metrics with cone angles \(2\pi n_P\), where
each \(n_P\) is an integer. The critical asymptotic coefficients at
the conic points identify
\(
    D_{\max}(\Delta_X)/
    D_{\min}(\Delta_X)
\)
with a finite-dimensional space \(V_S\), on which the Green form
induces a nondegenerate skew-Hermitian form \(\omega_S\). We give a
direct Fourier-mode construction that fixes the normalization needed
later. Self-adjoint realizations are then parametrized by Lagrangian
subspaces of \((V_S,\omega_S)\). For every Lagrangian pair, we
construct the associated Poisson operator and Weyl function and
derive a finite-rank Kre\u{\i}n resolvent formula. Its trace yields the
logarithmic derivative of a finite-dimensional boundary determinant
\(d_{\mc V,\mc W}^X\). This leads to a comparison formula for
positive-spectrum zeta determinants, including the correction from
the omitted nonpositive spectrum. Since general self-adjoint conic
realizations may have spectral zeta functions with nonstandard
singularities, the determinant comparison is stated for pairs of
realizations satisfying the explicit zeta-regularity hypothesis in
Section~5.

Although the extension-theoretic parametrization, the abstract
Kre\u{\i}n formula, and the general notion of a Weyl function are
standard, their explicit normalized realization for conic critical
asymptotic data allows the same boundary data to encode the resolvent
trace, the finite-dimensional determinant in the zeta comparison, and
the geometry of the Weyl curve developed below.

The Grassmannian geometry of closed cone-operator extensions,
including the holomorphic family obtained from null spaces modulo the
minimal domain, was developed by Gil, Krainer, and Mendoza
\cite{GilKrainerMendoza2007}. The geometric construction used here
arose from organizing the preceding conic analysis in intrinsic
boundary coordinates. After completing this construction, we became
aware of Wang's abstract Weyl curve for a symmetric operator
\cite{WangWeylCurve}. The points of contact include the
Grassmannian-valued boundary data of formal solutions, Weyl functions
as local coordinates, and the associated characteristic bundles.
Whereas Wang emphasizes the abstract classification and complex
geometry of symmetric operators, our focus is the explicitly
normalized critical asymptotic data of conic Laplacians and their
relation to Kre\u{\i}n's resolvent formula, the resolvent trace identity,
zeta-determinant comparison, and the induced determinant line. The
critical boundary data of formal solutions
define a holomorphic Weyl curve
\(
    \mc K_X:
    \Sigma_X\to\operatorname{Gr}_m(V_S).
\)
The Weyl functions are its local graph coordinates. On the real
locus, \(\mc K_X\) is a positive curve in
\(\operatorname{LGr}(V_S,\omega_S)\), whose tangent form is identified
with the \(L^2(X')\)-inner product through the Poisson operator.
Furthermore, we identify \(d_{\mc V,\mc W}^X\) with a scalar
transition function of the determinant line of the pullback
tautological bundle \(\mc E_X\), and show that the corrected zeta
determinants form the local representatives of a smooth section of
\(
    \operatorname{Det}(\mc E_X^{\mb R})
    \otimes
    \mc L_C.
\)

The explicit construction of the critical boundary space is
two-dimensional. Once this space and its Green form are identified,
however, the subsequent Weyl-function, resolvent, and determinant
arguments depend only on finite-dimensional boundary symplectic data
and extend to analogous settings subject to the stated spectral and
zeta-regularity assumptions.

The paper is organized as follows. Section~2 constructs the critical
asymptotic boundary space and the associated Green form for conic
Laplacians on compact Riemann surfaces. Section~3 describes
self-adjoint realizations in terms of Lagrangian boundary conditions
and introduces the corresponding Poisson operators and Weyl
functions. Section~4 proves Kre\u{\i}n's resolvent formula and the
associated trace identity. Section~5 establishes the comparison
formula for positive-spectrum zeta determinants. Section~6 introduces
the Weyl curve, proves the positivity of its real restriction, and
interprets the boundary determinants and corrected zeta determinants
in terms of the associated determinant line bundle.

\section{Conic Laplacians and Boundary Symplectic Spaces}
We construct the boundary symplectic space of a conic Laplacian on a
compact Riemann surface. Although the underlying domain theory is
standard for regular-singular elliptic operators, a direct
Fourier-mode description fixes the notation and normalization needed
below. We use the general cone-domain theory only to identify the
localized remainder and establish uniqueness; the critical modes and
the normalization of their Green pairing are computed explicitly.

Let \(X\) be a compact Riemann surface, let \(S\subset X\) be a finite set,
and set \(X':=X\setminus S\). A conic metric on \(X\) with singular set
\(S\) is a Hermitian metric \(ds_X^2\) on \(X'\) with the following
property: for every \(P\in S\), there exist a neighborhood \(U_P\) of
\(P\), a holomorphic coordinate \(z:U_P\to\mathbb C\) centered at \(P\),
an integer \(n_P\geq 2\), and a smooth positive function
\(\rho_P\in C^\infty(U_P)\) such that
\[
ds_X^2=\rho_P(z)|z|^{2n_P-2}|dz|^2
\]
on \(U_P':=U_P\setminus\{P\}\). The integer \(n_P\) is called the index
at \(P\), and the corresponding cone angle is \(2\pi n_P\). The points
of \(S\) are called the conic points of \(X\).

We further assume that \(ds_X^2\) is radial near each conic point. More
precisely, for every \(P\in S\), there exists a coordinate disk
\(D_P\subset U_P\) centered at \(P\) such that the conformal factor
\(\rho_P(z)\) depends only on \(r=|z|\) on
\(D_P':=D_P\setminus\{P\}\). Thus, we may write
\(
\rho_P(z)=\rho_P(r)
\)
on \(D_P'\). Throughout this section, all conic metrics are assumed to
be radial in this sense.

Let \(dA_X\) denote the area form associated with \(ds_X^2\). The
nonnegative Laplace--Beltrami operator, initially defined on compactly
supported smooth functions, is denoted by
\[
\Delta_{X,c}:C_c^\infty(X')\subset L^2(X',dA_X)
\longrightarrow L^2(X',dA_X).
\]
With respect to a local coordinate \(z\) centered at a conic point
\(P\), it is given by
\[
\Delta_{X,c}
=
-\frac{4}{\rho_P(z)|z|^{2n_P-2}}
\frac{\partial^2}{\partial z\,\partial\overline z}.
\]
Writing \(z=re^{i\theta}\) on \(D_P'\), the radiality assumption gives
\[
\Delta_{X,c}
=
-\frac{1}{\rho_P(r)r^{2n_P}}
\left(
r^2\partial_r^2+r\partial_r+\partial_\theta^2
\right).
\]
Here \(C_c^\infty(X')\) denotes the space of compactly supported smooth
complex-valued functions on \(X'\).

Let \(L^2(X',dA_X)\) denote the Hilbert space of equivalence classes of
measurable complex-valued functions \(u\) on \(X'\) satisfying
\(\|u\|_{L^2(X',dA_X)}^2:=\int_{X'}|u|^2\,dA_X<\infty\), where functions
that agree almost everywhere are identified. Here \(dA_X\) denotes the
area form associated with the metric \(ds_X^2\). We equip
\(L^2(X',dA_X)\) with the Hermitian inner product
\(\langle u,v\rangle_{L^2(X',dA_X)}
:=\int_{X'}u\overline{v}\,dA_X\). For simplicity, we write
\(L^2(X'):=L^2(X',dA_X)\).

Let \(u\in L^2(X')\). We say that the distributional Laplacian of \(u\)
belongs to \(L^2(X')\) if there exists \(F\in L^2(X')\) such that
\(\langle u,\Delta_{X,c}\phi\rangle_{L^2(X')}
=\langle F,\phi\rangle_{L^2(X')}\) for every
\(\phi\in C_c^\infty(X')\). In this case, \(F\) is unique and is denoted
by \(\Delta_Xu\).

We now define the maximal and minimal domains of the Laplacian. The maximal
domain is
\(D_{\max}(\Delta_X):=\{u\in L^2(X'):\Delta_Xu\in L^2(X')\}\), where
\(\Delta_Xu\) is understood in the distributional sense. Equipped with the
graph norm
\(\|u\|_{\Delta_X}:=
\bigl(\|u\|_{L^2(X')}^2+\|\Delta_Xu\|_{L^2(X')}^2\bigr)^{1/2}\),
the space \(D_{\max}(\Delta_X)\) is a Hilbert space.
The minimal domain \(D_{\min}(\Delta_X)\) is the closure of
\(C_c^\infty(X')\) in \(D_{\max}(\Delta_X)\) with respect to the graph
norm \(\|\cdot\|_{\Delta_X}\). Consequently, it is a closed Hilbert
subspace of \(D_{\max}(\Delta_X)\).

We shall identify the finite-dimensional quotient
\(D_{\max}(\Delta_X)/D_{\min}(\Delta_X)\) with the critical
asymptotic boundary data at the conic points. This will also show that
\(D_{\min}(\Delta_X)\subsetneq D_{\max}(\Delta_X)\) whenever
\(S\neq\varnothing\).

\begin{lem}\label{lem:away-from-conic}
Let \(w\in D_{\max}(\Delta_X)\). If
\(\operatorname{supp}w\Subset X'\), then
\(w\in D_{\min}(\Delta_X)\).
\end{lem}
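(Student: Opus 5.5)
The plan is to use interior elliptic regularity followed by mollification on a compact piece of \(X'\). Set \(K:=\operatorname{supp}w\), a compact subset of \(X'\). Choose an open set \(\Omega\) with smooth boundary and \(K\subset\Omega\Subset X'\). On \(\overline\Omega\) the metric \(ds_X^2\) is smooth and nondegenerate, so \(\Delta_X\) is a uniformly elliptic second-order operator with smooth coefficients there, and \(dA_X\) is comparable to Lebesgue measure in local coordinates.

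\textbf{Step 1: regularity.} Since \(w\in L^2\) and \(\Delta_Xw\in L^2\) in the distributional sense on \(X'\), interior elliptic regularity on a slightly larger precompact open set \(\Omega'\supset\overline\Omega\) gives \(w\in H^2_{\mathrm{loc}}(\Omega')\). Because \(w\) vanishes outside \(K\), we get \(w\in H^2(X')\) with compact support in \(\Omega\). In particular, \(\Delta_Xw\) is computed pointwise a.e. by the local formula.

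\textbf{Step 2: approximation.} Cover \(\overline\Omega\) by finitely many coordinate charts inside \(X'\) and take a subordinate partition of unity \(\{\chi_j\}\). Each \(\chi_jw\) is an \(H^2\) function with compact support in a chart. Mollify in that chart, \(w_{j,\varepsilon}:=(\chi_jw)*\eta_\varepsilon\). For small \(\varepsilon\) these lie in \(C_c^\infty(X')\), all supported in a fixed compact set \(K'\subset X'\), and \(w_{j,\varepsilon}\to\chi_jw\) in \(H^2\). Because the coefficients of \(\Delta_X\) and the density of \(dA_X\) are bounded on \(K'\), we have
\[
\|\Delta_X v\|_{L^2(X')}+\|v\|_{L^2(X')}\le C\|v\|_{H^2}
\]
for \(v\) supported in \(K'\). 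Hence \(\sum_j w_{j,\varepsilon}\to w\) in the graph norm \(\|\cdot\|_{\Delta_X}\). This shows \(w\in D_{\min}(\Delta_X)\).

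The only substantive point is Step 1, the local \(H^2\) regularity. I will simply quote the standard interior estimate for elliptic operators with smooth coefficients, since no conic point is involved. Alternatively, one can avoid regularity entirely by using Friedrichs' lemma: the commutator \([\Delta_X,\eta_\varepsilon *]\) applied to \(w\in L^2\) has the form (first-order terms)\(\,\cdot\,\)(mollifier). This tends to \(0\) in \(L^2\) only when \(w\) has one derivative, so I prefer the regularity route.
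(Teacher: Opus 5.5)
Your proposal is correct and follows essentially the same route as the paper. Interior elliptic regularity on a precompact neighborhood of \(\operatorname{supp}w\) gives \(w\in H^2\), and smoothing by compactly supported functions converging in \(H^2\) then gives convergence in the graph norm, since \(\Delta_X\) has smooth coefficients away from \(S\). Your chart-wise partition-of-unity mollification simply spells out the ``standard smoothing'' step that the paper leaves implicit.
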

\begin{proof}
Set \(K:=\operatorname{supp}w\), and choose relatively compact open sets
\(\Omega_1,\Omega_2\subset X'\) such that
\(K\Subset\Omega_1\Subset\Omega_2\Subset X'\). Since
\(w\in D_{\max}(\Delta_X)\), both \(w\) and \(\Delta_Xw\) belong to
\(L^2(\Omega_2)\). The interior elliptic estimate gives
\(\|w\|_{H^2(\Omega_1)}
\leq C\bigl(\|\Delta_Xw\|_{L^2(\Omega_2)}
+\|w\|_{L^2(\Omega_2)}\bigr)\), and hence
\(w\in H^2(\Omega_1)\).

Since \(\operatorname{supp}w\Subset\Omega_1\), standard smoothing yields
a sequence \(w_j\in C_c^\infty(\Omega_1)\) such that
\(w_j\to w\) in \(H^2(\Omega_1)\). Since \(\Delta_X\) has smooth
coefficients on \(\Omega_1\), it follows that
\(\Delta_{X,c}w_j\to\Delta_Xw\) in \(L^2(\Omega_1)\). Moreover,
differential operators do not enlarge supports, so \(w_j\), \(w\),
\(\Delta_{X,c}w_j\), and \(\Delta_Xw\) all vanish on
\(X'\setminus\Omega_1\). Consequently, \(w_j\to w\) and
\(\Delta_{X,c}w_j\to\Delta_Xw\) in \(L^2(X')\). Thus \(w_j\to w\)
with respect to the graph norm \(\|\cdot\|_{\Delta_X}\), and therefore
\(w\in D_{\min}(\Delta_X)\).
\end{proof}

\begin{lem}\label{lem:fourier-decomposition}
Fix \(P\in S\), and let \(D_P'\) be a punctured coordinate disk centered
at \(P\) on which
\(ds_X^2=\rho_P(r)r^{2n_P-2}|dz|^2\), where \(z=re^{i\theta}\).
Thus \(dA_X=\rho_P(r)r^{2n_P-1}\,dr\,d\theta\).

For \(u\in L^2(X')\), write its restriction to \(D_P'\) as
\[
    u(r,\theta)=\sum_{j\in\mathbb Z}u_j(r)e^{ij\theta},
\]
where
\[
    u_j(r):=\frac{1}{2\pi}
    \int_0^{2\pi}u(r,\theta)e^{-ij\theta}\,d\theta.
\]
Then, on \(D_P'\),
\[
\Delta_Xu
=
-\sum_{j\in\mathbb Z}
\frac{1}{\rho_P(r)r^{2n_P}}
\left(
r^2u_j''(r)+ru_j'(r)-j^2u_j(r)
\right)e^{ij\theta}
\]
in the sense of distributions, where the derivatives of \(u_j\) are
understood distributionally.
\end{lem}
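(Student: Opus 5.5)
We will prove the identity by testing against functions of separated form and then passing to general test functions by density. Since \(u\in L^2(X')\) and \(dA_X=\rho_P(r)r^{2n_P-1}dr\,d\theta\), Fubini's theorem shows that \(u(r,\cdot)\in L^2(0,2\pi)\) for almost every \(r\). Hence each coefficient \(u_j\) is a measurable function on the radial interval with
\[
\int|u_j(r)|^2\rho_P(r)r^{2n_P-1}\,dr<\infty .
\]
In particular \(u_j\in L^1_{\mathrm{loc}}\) on the open radial interval, so its distributional derivatives make sense there. Parseval's identity in \(\theta\) gives
\[
\|u\|_{L^2(D_P')}^2=2\pi\sum_j\int|u_j|^2\rho_P r^{2n_P-1}\,dr,
\]
so the Fourier series converges to \(u\) in \(L^2(D_P')\). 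The right-hand side of the claimed formula is to be read as a distribution on \(D_P'\). Its \(j\)-th term pairs with a test function \(\phi\) through that test function's \(j\)-th Fourier coefficient.

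The first step is the single-mode computation. Take \(\phi(r,\theta)=\psi(r)e^{ik\theta}\) with \(\psi\in C_c^\infty\) of the radial interval. The explicit formula for \(\Delta_{X,c}\) on \(D_P'\) gives
\[
\Delta_{X,c}\phi=-\frac{1}{\rho_P r^{2n_P}}\bigl(r^2\psi''+r\psi'-k^2\psi\bigr)e^{ik\theta}.
\]
We then compute \(\langle u,\Delta_{X,c}\phi\rangle\). In the \(\theta\)-integral only the mode \(j=k\) survives. The weight \(\rho_P r^{2n_P-1}\) cancels against \(1/(\rho_P r^{2n_P})\), leaving \(r^{-1}\), so
\[
\langle u,\Delta_{X,c}\phi\rangle=-2\pi\int u_k(r)\,\overline{\bigl(r\psi''+\psi'-k^2r^{-1}\psi\bigr)}\,dr .
\]
Since \(r\psi''+\psi'=(r\psi')'\) and \(r\) is real and smooth on the open interval, integrating by parts in the distributional sense gives
\[
\langle u,\Delta_{X,c}\phi\rangle=-2\pi\bigl\langle\, (ru_k')'-k^2r^{-1}u_k,\ \psi\,\bigr\rangle .
\]
Here the pairing is the distributional pairing on the interval with the conjugation convention. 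This is exactly the pairing of the \(k\)-th term \(-(\rho_Pr^{2n_P})^{-1}(r^2u_k''+ru_k'-k^2u_k)e^{ik\theta}\) with \(\phi\), taken against \(dA_X\): the measure \(\rho_P r^{2n_P-1}dr\,d\theta\) cancels the prefactor up to \(r^{-1}\), and \(r(r u_k')'=r^2u_k''+ru_k'\) holds distributionally. All the other terms pair with \(\phi\) to zero.

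The second step passes to a general \(\phi\in C_c^\infty(D_P')\). Write
\[
\phi=\sum_k\phi_k(r)e^{ik\theta},
\]
where the \(\phi_k\) are smooth, supported in a fixed compact radial interval, and decay rapidly in \(k\) in every \(C^m\) norm (integrate by parts in \(\theta\)). Then \(\Delta_{X,c}\phi=\sum_k\Delta_{X,c}(\phi_ke^{ik\theta})\), and this series converges in \(L^2\), indeed uniformly, thanks to the rapid decay. Pairing with \(u\) is continuous on \(L^2\), so
\[
\langle u,\Delta_{X,c}\phi\rangle=\sum_k\langle u,\Delta_{X,c}(\phi_ke^{ik\theta})\rangle .
\]
By the first step, this equals the sum over \(k\) of the pairings of the \(k\)-th terms of the series with \(\phi\). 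That is precisely the definition of the series of distributions applied to \(\phi\).

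The main obstacle is making sense of the right-hand side as a convergent distribution, since each term is only a distribution. We will handle it by showing that the series converges in \(\mathcal D'(D_P')\). Its pairing with \(\phi\) is
\[
-2\pi\sum_k\int u_k\,\overline{\bigl((r\phi_k')'-k^2r^{-1}\phi_k\bigr)}\,dr .
\]
By Cauchy--Schwarz this is bounded by
\[
\Bigl(\sum_k\|u_k\|_{L^2(K)}^2\Bigr)^{1/2}\Bigl(\sum_k(1+k^2)^2\|\phi_k\|_{C^2}^2\Bigr)^{1/2}
\]
on a compact radial set \(K\) away from \(0\), where the weight is bounded above and below. The second factor is finite by the rapid decay of the \(\phi_k\). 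Hence the series converges absolutely for each \(\phi\) and defines a distribution, and by the second step it equals \(\Delta_Xu\) in the distributional sense. If \(\Delta_Xu\in L^2\), it coincides with the \(L^2\) function \(F\) from the definition.
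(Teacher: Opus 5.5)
Your argument is correct and takes the same route as the paper. The paper only asserts that the identity follows from the standard Fourier-mode decomposition of the polar Laplacian. Your mode-by-mode testing against \(\psi(r)e^{ik\theta}\), the rapid-decay density argument, and the absolute-convergence estimate supply the details it omits.
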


The assertion follows directly from the standard Fourier decomposition
of the Laplace--Beltrami operator in polar coordinates.

Fix \(P\in S\), and abbreviate \(D':=D_P'\), \(n:=n_P\), and
\(\rho:=\rho_P\). Let \(F\in L^2(D',dA_X)\), and suppose that
\(\Delta_Xu=F\) in the distributional sense for some
\(u\in L^2(D',dA_X)\). Write
\(F(r,\theta)=\sum_{j\in\mathbb Z}F_j(r)e^{ij\theta}\) and
\(u(r,\theta)=\sum_{j\in\mathbb Z}u_j(r)e^{ij\theta}\). Then, for every
\(j\in\mathbb Z\),
\begin{equation}\label{eq:fourier-ode}
r^2u_j''(r)+ru_j'(r)-j^2u_j(r)
=
-\rho(r)r^{2n}F_j(r)
\end{equation}
in the sense of distributions on \((0,1)\).

Set \(f_j(r):=\rho(r)r^{2n-1}F_j(r)\). Since
\(F_j\in L^2((0,1),\rho(r)r^{2n-1}\,dr)\), the Cauchy--Schwarz
inequality gives \(f_j\in L^1(0,1)\). Moreover,
\(f_j\in L^2_{\mathrm{loc}}(0,1)\). It follows from the standard
one-dimensional interior regularity that
\(u_j\in H^2_{\mathrm{loc}}(0,1)\), and
\eqref{eq:fourier-ode} holds almost everywhere.

Fix \(r_0\in(0,1)\). For \(0<r<r_0\), the zero mode has the form
\[
u_0(r)
=
a_0+b_0\log r
-
\int_0^r
\log\frac{r}{\tau}\,f_0(\tau)\,d\tau.
\]
For \(j\neq0\), one initially obtains
\[
\begin{aligned}
u_j(r)
&=
a_jr^{|j|}+b_jr^{-|j|} \\
&\quad
+
\frac{1}{2|j|}
\left(
r^{-|j|}\int_0^r\tau^{|j|}f_j(\tau)\,d\tau
+
r^{|j|}\int_r^{r_0}\tau^{-|j|}f_j(\tau)\,d\tau
\right),
\end{aligned}
\]
where \(a_j,b_j\in\mathbb C\).

Set \(d\mu(r):=\rho(r)r^{2n-1}\,dr\). Weighted Cauchy--Schwarz
estimates show that the integral terms belong to
\(L^2((0,r_0),d\mu)\). Moreover, \(r^{-|j|}\) belongs to this space
if and only if \(|j|<n\). Since
\(u\in L^2(D',dA_X)\), it follows that \(b_j=0\) whenever
\(|j|\geq n\).

If \(1\leq |j|\leq n-1\), another weighted Cauchy--Schwarz estimate
shows that the function
\(\tau\mapsto\tau^{-|j|}f_j(\tau)\) belongs to \(L^1(0,r_0)\).
Absorbing the resulting multiple of \(r^{|j|}\) into
\(a_jr^{|j|}\), we may therefore write
\[
\begin{aligned}
u_j(r)
&=
a_jr^{|j|}+b_jr^{-|j|} \\
&\quad+
\frac{1}{2|j|}
\left(
r^{-|j|}\int_0^r\tau^{|j|}f_j(\tau)\,d\tau
-
r^{|j|}\int_0^r\tau^{-|j|}f_j(\tau)\,d\tau
\right).
\end{aligned}
\]
This normalization separates the two homogeneous critical terms from
the particular solution.

We now return to the study of \(D_{\max}(\Delta_X)\). We first consider
the case in which \(S=\{P\}\) consists of a single conic point. Fix a
coordinate disk \(D_P\Subset U_P\) centered at \(P\) on which the metric
is radial, and write \(z=re^{i\theta}\).

\begin{thm}\label{thm:asymptotic-expansion}
Assume that \(S=\{P\}\). For every \(u\in D_{\max}(\Delta_X)\), there
exist unique coefficients \(a_j,b_j\in\mathbb C\),
\(|j|\leq n_P-1\), such that, on \(D_P':=D_P\setminus\{P\}\),
\[
u(r,\theta)
=
a_0+b_0\log r
+
\sum_{1\leq |j|\leq n_P-1}
\left(
a_jr^{|j|}+b_jr^{-|j|}
\right)e^{ij\theta}
+
R(r,\theta).
\]
Moreover, for every cutoff function \(\chi\in C_c^\infty(D_P)\) equal
to \(1\) in a neighborhood of \(P\), the function \(\chi R\), extended
by zero to \(X'\), belongs to \(D_{\min}(\Delta_X)\).
\end{thm}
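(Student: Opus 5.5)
Existence of the expansion comes from the Fourier-mode formulas already derived. Given \(u\in D_{\max}(\Delta_X)\), set \(F:=\Delta_Xu\) on \(D_P'\) and apply the mode-by-mode representation preceding the theorem. For \(|j|\le n_P-1\) this produces coefficients \(a_j,b_j\). For \(j=0\), the representation gives \(a_0,b_0\) directly. For \(|j|\geq n_P\), the argument already given forces \(b_j=0\). We then define \(R:=u-a_0-b_0\log r-\sum_{1\le|j|\le n_P-1}(a_jr^{|j|}+b_jr^{-|j|})e^{ij\theta}\). Its Fourier modes are as follows:
\begin{itemize}
\item for \(|j|\le n_P-1\), the particular-solution integrals;
\item for \(|j|\ge n_P\), the full functions \(u_j\), which are \(a_jr^{|j|}\) plus integral terms.
\end{itemize}
Uniqueness follows because every function \(r^{\pm|j|}\) and \(\log r\) with \(|j|\le n_P-1\) lies in \(L^2(D_P',dA_X)\) and is harmonic. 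Hence a nontrivial critical combination \(h\) with \(\chi h\in D_{\min}\) would give a contradiction. So once we show \(\chi R\in D_{\min}\), uniqueness reduces to showing \(\chi h\notin D_{\min}\) for every nonzero critical combination \(h\).

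For this, I would use the Green form. For \(v\in D_{\min}\) and \(w\in D_{\max}\), we have \(\langle\Delta_Xv,w\rangle-\langle v,\Delta_Xw\rangle=0\); this follows by approximation from \(C_c^\infty(X')\). Test \(\chi h\) against \(\chi r^{\mp|j|}e^{ij\theta}\) (and against \(\chi\), \(\chi\log r\) for \(j=0\)). Integrating by parts on \(\{r>\varepsilon\}\) and letting \(\varepsilon\to0\), the boundary term is \(2\pi r(\partial_r f\,\bar g-f\,\partial_r\bar g)\) on \(r=\varepsilon\). For the pair \(r^{|j|},r^{-|j|}\) in mode \(j\) it equals \(\pm4\pi|j|\), and for the pair \(1,\log r\) it equals \(2\pi\). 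These are nonzero, so they detect each coefficient and force all of them to vanish.

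The main obstacle is proving \(\chi R\in D_{\min}\). First, \(\chi R\in D_{\max}\), since \(\Delta_X(\chi R)=\chi F+[\Delta_X,\chi]R\) and the commutator is supported away from \(P\). I would then prove the following claim: \(v\in D_{\max}\) lies in \(D_{\min}\) if and only if \(\langle\Delta_Xv,w\rangle=\langle v,\Delta_Xw\rangle\) for all \(w\in D_{\max}\). This holds because \(D_{\min}=D_{\max}^*\)-adjoint, i.e. \(\Delta_{\min}=\Delta_{\max}^*\).

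Given the claim, it suffices to show that the boundary pairing \(\lim_{\varepsilon\to0}\int_{r=\varepsilon}(\partial_r(\chi R)\bar w-\chi R\,\partial_r\bar w)\,\varepsilon\,d\theta\) vanishes for every \(w\in D_{\max}\). Write \(w\) in the same way, as a critical part plus a remainder. There are three cases:
\begin{itemize}
\item \textbf{Remainder–remainder terms.} Use the explicit bounds from weighted Cauchy–Schwarz. For example, for \(|j|\le n_P-1\), \(r^{-|j|}\int_0^r\tau^{|j|}f_j=o(r^{-|j|})\cdot r^{|j|}\)-type estimates give \(R_j=o(r^{|j|})\)-improved decay relative to the critical pair, and similarly for the derivatives. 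The Wronskian with the critical modes then tends to \(0\).
\item \textbf{Modes with \(|j|\ge n_P\).} Here both functions are small, and the sum over \(j\) is controlled by the \(L^2\) and \(H^2_{\mathrm{loc}}\) bounds.
\item \textbf{Pairings between the two remainders.} These vanish because the Wronskian of two particular solutions, each \(o\) of the critical growth, tends to \(0\).
\end{itemize}

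If summing the estimates over infinitely many modes proves delicate, I would instead cite the general cone-operator result that \(D_{\min}\) consists of functions in the weighted space \(r^{n_P-1}\mathcal H^2\)-type of cone Sobolev space, and check that \(\chi R\) lies in it mode by mode. The paper says it will do exactly this ("general cone-domain theory only to identify the localized remainder"). The hardest step is the uniform control of the high modes \(|j|\ge n_P\) and the \(o(\cdot)\) estimates for the near-critical integrals, especially the \(j=0\) logarithmic term \(\int_0^r\log(r/\tau)f_0\).
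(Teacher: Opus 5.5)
Your proof is correct in outline, but it reaches the central step, \(\chi R\in D_{\min}(\Delta_X)\), by a genuinely different route from the paper.

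\textbf{The paper's route.} The paper settles this step by citation. Proposition~3.6 of Gil--Mendoza identifies the localized minimal domain with \(D_{\max}(\Delta_X)\cap\bigcap_{\varepsilon>0}r^{n_P-\varepsilon}H_b^2\), and their Lemma~7.6 puts the remainder in that space. Uniqueness is read off from the same identification. So the correct weight in your fallback is \(r^{n_P-\varepsilon}\) for every \(\varepsilon>0\), with \(H_b^2\) built on \(dr/r\,d\theta\), not an \(r^{n_P-1}\)-type weight.

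\textbf{Your route.} You use the abstract identity \(\Delta_{X,\min}=\bigl(\Delta_X|_{D_{\max}(\Delta_X)}\bigr)^*\). This makes \(D_{\min}(\Delta_X)\) exactly the radical of \(q_X\) in \(D_{\max}(\Delta_X)\); the paper only derives that fact afterwards, in Corollary~\ref{cor:rad-q-dmin}, as a consequence of the theorem. You then verify \(q_X(\chi R,w)=0\) through Wronskians on \(r=\varepsilon\). Your uniqueness argument is the Green-form computation the paper carries out later, in Lemma~\ref{lem:green-form-local} and Corollary~\ref{cor:critical-terms-not-minimal}.

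\textbf{What each buys.} Your route is elementary and self-contained: one-dimensional ODE estimates plus von Neumann's adjoint identity. The citation is shorter, belongs to the general elliptic cone calculus, and describes the remainder directly in weighted \(b\)-Sobolev terms.

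\textbf{The step you leave open does close with the tools you name.}
For \(|j|\le n_P-1\), including the logarithmic \(j=0\) mode, the weighted Cauchy--Schwarz bounds give \(R_j(r)=o(r^{n_P})\) and \(rR_j'(r)=o(r^{n_P})\). This kills every low-mode pairing. Note that your first bullet is really about remainder--critical pairings, not remainder--remainder ones.

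For \(|j|\ge n_P\), the same bounds, with constants uniform in \(j\), control the particular solutions; a logarithm appears only at \(|j|=n_P\). The \(L^2\) trace of \(u\) on \(r=r_0\) controls \(\sum_j|a_j|^2r_0^{2|j|}\). Together these give \(\sum_{|j|\ge n_P}\bigl(|u_j(r)|^2+|ru_j'(r)|^2\bigr)=O\bigl(r^{2n_P}\log(1/r)\bigr)\), and likewise for \(w\). Parseval and Cauchy--Schwarz then bound the high-mode pairing at \(r=\varepsilon\) by \(O\bigl(\varepsilon^{2n_P}\log(1/\varepsilon)\bigr)\), which tends to \(0\).
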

\begin{proof}
Set \(F:=\Delta_Xu\in L^2(X')\). After shrinking \(D_P\), if necessary,
we identify it with the unit disk by the coordinate \(z=re^{i\theta}\).
On \(D_P'\), write
\(u(r,\theta)=\sum_{j\in\mathbb Z}u_j(r)e^{ij\theta}\) and
\(F(r,\theta)=\sum_{j\in\mathbb Z}F_j(r)e^{ij\theta}\).

The preceding modewise analysis shows that there exist coefficients
\(a_j,b_j\in\mathbb C\), \(|j|\leq n_P-1\), such that
\[
u(r,\theta)
=
a_0+b_0\log r
+
\sum_{1\leq |j|\leq n_P-1}
\left(
a_jr^{|j|}+b_jr^{-|j|}
\right)e^{ij\theta}
+
R(r,\theta).
\]
Indeed, the \(L^2\)-condition implies that the coefficient of
\(r^{-|j|}e^{ij\theta}\) vanishes whenever \(|j|\geq n_P\).

It remains to identify the localized remainder with an element of the
minimal domain. Near \(P\), the Laplacian is the \(b\)-elliptic cone
operator
\[
    -r^{-2n_P}\rho_P(r)^{-1}
    \bigl((r\partial_r)^2+\partial_\theta^2\bigr)
\]
acting in
\(L^2(\rho_P(r)r^{2n_P}\,dr/r\,d\theta)\). On the angular mode
\(e^{ij\theta}\), the indicial polynomial of the factor in
parentheses is \(\zeta^2-j^2\). In the exponent convention
\(r^\zeta\), the critical strip is
\(-n_P<\operatorname{Re}\zeta<n_P\). Thus its indicial terms are
exactly
\[
    1,\quad \log r,\quad
    r^{|j|}e^{ij\theta},\quad r^{-|j|}e^{ij\theta},
    \qquad 1\leq |j|\leq n_P-1.
\]
The modewise formulas above show that these are precisely the critical
Mellin singular parts of \(u\). To state the resulting estimate for the
remainder precisely, let \(H_b^2\) denote the local \(b\)-Sobolev space
defined using the vector fields \(r\partial_r\) and
\(\partial_\theta\). In the notation of \cite{gjb}, the order is
\(m=2\) and the weight of the present cone operator is
\(\nu=2n_P\). After localization near \(P\),
Proposition~3.6 of \cite{gjb} says that an element
\(w\in D_{\max}(\Delta_X)\) supported in \(D_P\), and extended by zero
to \(X'\), belongs to the minimal domain precisely when
\[
    w\in
    \bigcap_{\varepsilon>0}
    r^{n_P-\varepsilon}H_b^2.
\]
More precisely, Lemma~7.6 and the discussion preceding it in
\cite{gjb} show that removing all the critical Mellin singular parts
leaves a remainder satisfying
\[
    \chi R\in r^{n_P-\varepsilon}H_b^2
    \qquad\text{for every }\varepsilon>0.
\]
Here this is a weighted \(b\)-Sobolev statement, rather than merely a
pointwise estimate. Moreover, \(\chi R\in D_{\max}(\Delta_X)\), since
it is the difference between \(\chi u\) and the finitely many cutoff
critical terms displayed above. Proposition~3.6 therefore gives
\(\chi R\in D_{\min}(\Delta_X)\). The same identification shows that
the critical coefficients are unique.
\end{proof}

Motivated by the preceding asymptotic expansion, we introduce the
following definition.

\begin{defn}
Let \(P\in S\) be a conic point with index \(n_P\), and let \(z_P\) be
the chosen local coordinate centered at \(P\). Write
\(z_P=r_Pe^{i\theta_P}\), where
\(\theta_P\in\mathbb R/2\pi\mathbb Z\). Define
\(f_{P,0}:=1/\sqrt{2\pi}\) and
\(f_{P,0}^{\#}:=-\log r_P/\sqrt{2\pi}\). For
\(1\leq |j|\leq n_P-1\), define
\(f_{P,j}:=r_P^{|j|}e^{ij\theta_P}/\sqrt{4\pi|j|}\) and
\(f_{P,j}^{\#}:=r_P^{-|j|}e^{ij\theta_P}/\sqrt{4\pi|j|}\).

The critical asymptotic space at \(P\), with respect to \(z_P\), is
\[
V_P
:=
\operatorname{span}_{\mathbb C}
\bigl\{
f_{P,j},f_{P,j}^{\#}:|j|\leq n_P-1
\bigr\}.
\]
The elements \(f_{P,j}\) and \(f_{P,j}^{\#}\),
\(|j|\leq n_P-1\), are called the critical asymptotic terms at \(P\).
\end{defn}

\begin{cor}\label{thm:global-decomposition-single-point}
Assume that \(S=\{P\}\), and let \(D_P\) be the coordinate disk fixed
above. For every \(u\in D_{\max}(\Delta_X)\), there exists a unique
element \(h\in V_P\) such that, for every cutoff function
\(\chi\in C_c^\infty(D_P)\) equal to \(1\) near \(P\), the function
\(R_\chi:=u-\chi h\) belongs to \(D_{\min}(\Delta_X)\). Here \(\chi h\)
is extended by zero to \(X'\). Consequently,
\(u=\chi h+R_\chi\), where \(h\in V_P\) and
\(R_\chi\in D_{\min}(\Delta_X)\).
\end{cor}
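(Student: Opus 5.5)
The plan is to deduce the corollary directly from Theorem~\ref{thm:asymptotic-expansion}, rewriting its expansion in terms of the normalized critical terms and then splitting the global function with a cutoff. Uniqueness will come from a separate argument showing that no nonzero cutoff critical combination lies in the minimal domain.

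\emph{Existence.} Given \(u\in D_{\max}(\Delta_X)\), Theorem~\ref{thm:asymptotic-expansion} supplies coefficients \(a_j,b_j\) and a remainder \(R\) on \(D_P'\). Rescaling by the fixed normalizing constants, I will define
\[
h:=\sqrt{2\pi}\,a_0f_{P,0}-\sqrt{2\pi}\,b_0f_{P,0}^{\#}+\sum_{1\le|j|\le n_P-1}\sqrt{4\pi|j|}\bigl(a_jf_{P,j}+b_jf_{P,j}^{\#}\bigr)\in V_P,
\]
so that \(u=h+R\) on \(D_P'\). For a cutoff \(\chi\) as in the statement, I will write
\[
u-\chi h=\chi R+(1-\chi)u .
\]
The theorem gives \(\chi R\in D_{\min}(\Delta_X)\). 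The term \((1-\chi)u\) is supported away from \(P\), and since \(S=\{P\}\) its support is compact in \(X'\). It also lies in \(D_{\max}(\Delta_X)\), because
\[
\Delta_X\bigl((1-\chi)u\bigr)=(1-\chi)\Delta_Xu+2\langle\nabla\chi,\nabla u\rangle-(\Delta\chi)u ,
\]
and the derivative terms are supported on the compact annulus where \(\chi\) is nonconstant, where \(u\in H^2_{\mathrm{loc}}\) by interior elliptic regularity. Lemma~\ref{lem:away-from-conic} then puts \((1-\chi)u\) in \(D_{\min}(\Delta_X)\). Since \(h\) was constructed independently of \(\chi\), this one \(h\) works for every cutoff.

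\emph{Uniqueness.} If \(h,h'\) both work for some cutoff \(\chi\), their difference \(k:=h-h'\in V_P\) satisfies \(\chi k\in D_{\min}(\Delta_X)\). Theorem~\ref{thm:asymptotic-expansion} asserts that the critical coefficients of any element of \(D_{\max}(\Delta_X)\) are unique. Applied to \(\chi k\), this gives two expansions: one with coefficients those of \(k\) and remainder \(0\) near \(P\), and one with all coefficients zero and remainder \(\chi k\), which is admissible because \(\chi k\in D_{\min}(\Delta_X)\). Uniqueness forces the coefficients of \(k\) to vanish, so \(k=0\).

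This second expansion is exactly where care is needed. It relies on reading the theorem's uniqueness as: a decomposition is admissible only if its remainder, after cutting off, lies in \(D_{\min}(\Delta_X)\), and such a decomposition determines the coefficients. If that reading seems too strong, I will instead argue directly that each nonzero critical combination fails the characterization \(w\in\bigcap_{\varepsilon>0}r^{n_P-\varepsilon}H_b^2\) from Proposition~3.6 of \cite{gjb}. Modes with distinct \(j\) are orthogonal in \(\theta\), so it suffices to treat one mode at a time. Within a single mode, every exponent in \(\{0,\log,\pm|j|\}\) has real part strictly less than \(n_P\), and a short Mellin or \(L^2\)-weight computation shows that none of these terms lies in \(r^{n_P-\varepsilon}L^2_b\) for small \(\varepsilon\). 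This verification is the main obstacle. The existence part is routine bookkeeping together with Lemma~\ref{lem:away-from-conic}.
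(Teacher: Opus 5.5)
Your proof is correct and is essentially the paper's. For existence you use the same splitting \(u-\chi h=\chi R+(1-\chi)u\), with Theorem~\ref{thm:asymptotic-expansion} handling \(\chi R\) and Lemma~\ref{lem:away-from-conic} handling \((1-\chi)u\). For uniqueness you inherit the uniqueness clause of Theorem~\ref{thm:asymptotic-expansion}, which the paper reads exactly as you do: uniqueness among expansions whose cut-off remainders lie in \(D_{\min}(\Delta_X)\).

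Applying that clause to \(\chi k\) works once you note that \(\chi'\chi k\in D_{\min}(\Delta_X)\) for every cutoff \(\chi'\), since \((1-\chi')\chi k\in C_c^\infty(X')\). So the Gil--Mendoza fallback is not needed, though it is also valid.
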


\begin{proof}
By Theorem~\ref{thm:asymptotic-expansion}, there exists a unique
\(h\in V_P\) such that \(u=h+R_0\) on \(D_P'\), where
\(\chi R_0\in D_{\min}(\Delta_X)\) for every cutoff function \(\chi\)
as above. For any such \(\chi\), one has
\(R_\chi=u-\chi h=\chi R_0+(1-\chi)u\).

The first term belongs to \(D_{\min}(\Delta_X)\) by
Theorem~\ref{thm:asymptotic-expansion}. By interior elliptic regularity
away from \(P\), the second term belongs to \(D_{\max}(\Delta_X)\).
Moreover, it vanishes near \(P\), and hence its support is compactly
contained in \(X'\). Lemma~\ref{lem:away-from-conic} therefore gives
\((1-\chi)u\in D_{\min}(\Delta_X)\). It follows that
\(R_\chi\in D_{\min}(\Delta_X)\).
\end{proof}

The unique element \(h\in V_P\) appearing in the preceding corollary is
called the critical asymptotic part of \(u\) at \(P\).

\begin{cor}\label{cor:smooth-density}
Assume that \(S=\{P\}\). The subspace
\(C^\infty(X')\cap D_{\max}(\Delta_X)\) is dense in
\(D_{\max}(\Delta_X)\) with respect to the graph norm.
\end{cor}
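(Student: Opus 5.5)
The plan is to use the decomposition of Corollary~\ref{thm:global-decomposition-single-point}, which reduces density in \(D_{\max}(\Delta_X)\) to density in \(D_{\min}(\Delta_X)\) plus a finite-dimensional smooth piece. Fix once and for all a cutoff \(\chi\in C_c^\infty(D_P)\) equal to \(1\) near \(P\). Given \(u\in D_{\max}(\Delta_X)\), that corollary gives a unique \(h\in V_P\) with
\[
u=\chi h+R_\chi,\qquad R_\chi\in D_{\min}(\Delta_X).
\]
The argument then approximates each summand separately.

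\textbf{The critical part.} We claim \(\chi h\in C^\infty(X')\cap D_{\max}(\Delta_X)\). Each critical asymptotic term \(f_{P,j}\), \(f_{P,j}^{\#}\) is smooth on \(D_P'\). Each one is harmonic there, since \(1\), \(\log r\), and \(r^{\pm|j|}e^{ij\theta}\) are annihilated by \(r^2\partial_r^2+r\partial_r+\partial_\theta^2\). Hence
\[
\Delta_X(\chi h)=-\frac{1}{\rho_P r^{2n_P}}\Bigl(2r^2\chi_r h_r+(r^2\chi_{rr}+r\chi_r)h\Bigr),
\]
which is supported in the annulus where \(\nabla\chi\neq0\). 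This annulus is compactly contained in \(X'\), so \(\Delta_X(\chi h)\) is smooth and compactly supported, hence in \(L^2(X')\). We must also check \(\chi h\in L^2(X')\). With \(dA_X=\rho_P r^{2n_P-1}dr\,d\theta\), the most singular terms \(r^{-|j|}\) with \(|j|\le n_P-1\) give integrands \(r^{2n_P-1-2|j|}\ge r\), and \(\log r\) is likewise square-integrable. So \(\chi h\) belongs to the smooth subspace and needs no approximation.

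\textbf{The minimal part.} By definition \(D_{\min}(\Delta_X)\) is the graph-norm closure of \(C_c^\infty(X')\). So there are \(\phi_k\in C_c^\infty(X')\) with \(\phi_k\to R_\chi\) in \(\|\cdot\|_{\Delta_X}\). Each \(\phi_k\) lies in \(C^\infty(X')\cap D_{\max}(\Delta_X)\). The sequence \(u_k:=\chi h+\phi_k\) then lies in the same subspace, and
\[
\|u-u_k\|_{\Delta_X}=\|R_\chi-\phi_k\|_{\Delta_X}\to0.
\]
This proves the corollary.

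I do not expect a serious obstacle. The only points needing care are the \(L^2\) integrability of the critical terms (where the restriction \(|j|\le n_P-1\) is essential) and the fact that \(\Delta_X(\chi h)\) is computed classically. The latter holds because \(\chi h\) is smooth on \(X'\), so its distributional and classical Laplacians agree.
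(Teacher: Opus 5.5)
Your proposal is correct and follows the paper's proof essentially verbatim. Both arguments decompose \(u=\chi h+R\) with \(R\in D_{\min}(\Delta_X)\) via Corollary~\ref{thm:global-decomposition-single-point}, approximate \(R\) in graph norm by elements of \(C_c^\infty(X')\), and keep \(\chi h\) fixed. The only difference is that you verify \(\chi h\in D_{\max}(\Delta_X)\) directly, using harmonicity of the critical terms and the \(L^2\) check with \(|j|\le n_P-1\); your explicit formula for \(\Delta_X(\chi h)\) tacitly takes \(\chi\) radial, which you may arrange. The paper instead simply observes that \(\chi h=u-R\in D_{\max}(\Delta_X)\).
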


\begin{proof}
Let \(u\in D_{\max}(\Delta_X)\), and let \(h\in V_P\) be its critical
asymptotic part at \(P\). Choose a cutoff function
\(\chi\in C_c^\infty(D_P)\) equal to \(1\) near \(P\), and set
\(R:=u-\chi h\), where \(\chi h\) is extended by zero outside \(D_P\).
By Corollary~\ref{thm:global-decomposition-single-point}, one has
\(R\in D_{\min}(\Delta_X)\).

Since \(D_{\min}(\Delta_X)\) is the graph-norm closure of
\(C_c^\infty(X')\), there exists a sequence
\(R_j\in C_c^\infty(X')\) such that
\(\|R_j-R\|_{\Delta_X}\to0\). Set \(u_j:=\chi h+R_j\). Since
\(\chi h=u-R\in D_{\max}(\Delta_X)\) and \(\chi h\) is smooth on
\(X'\), we have
\(u_j\in C^\infty(X')\cap D_{\max}(\Delta_X)\). Moreover,
\(\|u_j-u\|_{\Delta_X}=\|R_j-R\|_{\Delta_X}\to0\). This proves the
result.
\end{proof}

We now introduce the Green form on \(D_{\max}(\Delta_X)\). For
\(u,v\in D_{\max}(\Delta_X)\), define
\[
q_X(u,v)
:=
\langle\Delta_Xu,v\rangle_{L^2(X')}
-
\langle u,\Delta_Xv\rangle_{L^2(X')}.
\]
Then \(q_X\) is
skew-Hermitian; that is,
\(q_X(v,u)=-\overline{q_X(u,v)}\).

\begin{lem}\label{lem:dmin-kernel}
For every \(u\in D_{\max}(\Delta_X)\) and
\(v\in D_{\min}(\Delta_X)\), one has
\(q_X(u,v)=q_X(v,u)=0\). Consequently, \(q_X\) induces a well-defined
skew-Hermitian sesquilinear form on
\(D_{\max}(\Delta_X)/D_{\min}(\Delta_X)\).
\end{lem}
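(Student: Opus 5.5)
The plan is the standard density-plus-continuity argument. First I treat \(v\in C_c^\infty(X')\). By the definition of the distributional Laplacian, for every \(u\in D_{\max}(\Delta_X)\) and \(\phi\in C_c^\infty(X')\) we have
\[
\langle u,\Delta_{X,c}\phi\rangle_{L^2(X')}=\langle \Delta_Xu,\phi\rangle_{L^2(X')}.
\]
For \(\phi\in C_c^\infty(X')\), the distributional Laplacian coincides with \(\Delta_{X,c}\phi\). Hence \(q_X(u,\phi)=\langle\Delta_Xu,\phi\rangle-\langle u,\Delta_X\phi\rangle=0\). I will take care with complex conjugation here. The operator \(\Delta_{X,c}\) has real coefficients and is formally symmetric with respect to \(dA_X\), so the pairing convention \(\langle F,\phi\rangle\) in the definition matches the first slot of \(q_X\) directly. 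No conjugation issue arises, because the identity is stated with \(\phi\) in the second slot in both terms.

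Next I pass to general \(v\in D_{\min}(\Delta_X)\). Choose \(\phi_k\in C_c^\infty(X')\) with \(\phi_k\to v\) in the graph norm, which is possible by the definition of \(D_{\min}(\Delta_X)\). Then \(\phi_k\to v\) and \(\Delta_X\phi_k\to\Delta_Xv\) in \(L^2(X')\). For fixed \(u\), the map \(w\mapsto q_X(u,w)\) is continuous with respect to \(\|\cdot\|_{\Delta_X}\), since by Cauchy--Schwarz
\[
|q_X(u,w)|\le \|\Delta_Xu\|\,\|w\|+\|u\|\,\|\Delta_Xw\|\le 2\|u\|_{\Delta_X}\|w\|_{\Delta_X}.
\]
Therefore \(q_X(u,v)=\lim_k q_X(u,\phi_k)=0\). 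The skew-Hermitian property \(q_X(v,u)=-\overline{q_X(u,v)}\), noted just before the statement, then gives \(q_X(v,u)=0\) as well.

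Finally, I show the form descends to the quotient. Suppose \(u'=u+m\) and \(v'=v+m'\) with \(m,m'\in D_{\min}(\Delta_X)\). Sesquilinearity expands \(q_X(u',v')\) into \(q_X(u,v)\) plus the terms \(q_X(u,m')\), \(q_X(m,v)\), and \(q_X(m,m')\). Each of these vanishes by the first part, using \(D_{\min}\subset D_{\max}\). So the induced form on \(D_{\max}(\Delta_X)/D_{\min}(\Delta_X)\) is well defined, and it inherits skew-Hermitian sesquilinearity.

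There is no real obstacle. The only point needing care is checking that the distributional definition of \(\Delta_X\) pairs with test functions in exactly the slot used in \(q_X\). This is immediate from the definition given earlier.
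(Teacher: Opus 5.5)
Your proposal is correct and follows the paper's proof. Both approximate \(v\in D_{\min}(\Delta_X)\) in graph norm by functions in \(C_c^\infty(X')\), use the defining identity of the distributional Laplacian to get \(q_X(u,v_j)=0\), pass to the limit, and then invoke the skew-Hermitian symmetry for the other slot; your graph-norm continuity bound for \(q_X\) and the sesquilinear expansion showing well-definedness on the quotient just make explicit steps the paper leaves implicit.
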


\begin{proof}
Let \(v\in D_{\min}(\Delta_X)\). By the definition of the minimal
domain, there exists a sequence \(v_j\in C_c^\infty(X')\) such that
\(v_j\to v\) and
\(\Delta_{X,c}v_j\to\Delta_Xv\) in \(L^2(X')\). For every
\(u\in D_{\max}(\Delta_X)\), the definition of the distributional
Laplacian gives
\(\langle\Delta_Xu,v_j\rangle_{L^2(X')}
=\langle u,\Delta_{X,c}v_j\rangle_{L^2(X')}\). Thus
\(q_X(u,v_j)=0\). Passing to the limit yields \(q_X(u,v)=0\).
The skew-Hermitian property then gives \(q_X(v,u)=0\).

Therefore, \(q_X\) is unchanged when either argument is modified by an
element of \(D_{\min}(\Delta_X)\), and hence it induces a
well-defined form on
\(D_{\max}(\Delta_X)/D_{\min}(\Delta_X)\).
\end{proof}

We define the local boundary form
\(\omega_P:V_P\times V_P\to\mathbb C\) by requiring
\(\omega_P(f_{P,j},f_{P,k}^{\#})=\delta_{jk}\),
\(\omega_P(f_{P,j}^{\#},f_{P,k})=-\delta_{jk}\), and
\(\omega_P(f_{P,j},f_{P,k})
=\omega_P(f_{P,j}^{\#},f_{P,k}^{\#})=0\) for
\(|j|,|k|\leq n_P-1\).

Equivalently, if
\(h=\sum_{|j|\leq n_P-1}
(a_jf_{P,j}+b_jf_{P,j}^{\#})\) and
\(\ell=\sum_{|j|\leq n_P-1}
(c_jf_{P,j}+d_jf_{P,j}^{\#})\), then
\[
\omega_P(h,\ell)
=
\sum_{|j|\leq n_P-1}
\left(
a_j\overline{d_j}-b_j\overline{c_j}
\right).
\]

\begin{lem}\label{lem:omega-p-nondegenerate}
The sesquilinear form \(\omega_P\) on \(V_P\) is nondegenerate and
skew-Hermitian.
\end{lem}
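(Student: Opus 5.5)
The statement is purely finite-dimensional linear algebra: $\omega_P$ is given explicitly in the basis $\{f_{P,j},f_{P,j}^{\#}\}_{|j|\le n_P-1}$ of $V_P$. I will verify both properties directly from the coordinate formula
\[
\omega_P(h,\ell)=\sum_{|j|\le n_P-1}\bigl(a_j\overline{d_j}-b_j\overline{c_j}\bigr).
\]
The one preliminary point is that these $2(2n_P-1)$ functions really form a basis of $V_P$. Otherwise the coordinates $(a_j,b_j)$ would not be well defined, and $\omega_P$ would not be a well-defined form. I will settle this first by linear independence. Suppose $\sum_j(a_jf_{P,j}+b_jf_{P,j}^{\#})=0$ on $D_P'$. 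Taking the $e^{ij\theta}$ Fourier coefficient for each $j$ gives:
\begin{itemize}
\item for $j\ne0$, a relation $\alpha r^{|j|}+\beta r^{-|j|}\equiv0$ on $(0,1)$;
\item for $j=0$, a relation $\alpha-\beta\log r\equiv0$ on $(0,1)$.
\end{itemize}
Since these pairs of functions are linearly independent on $(0,1)$, every coefficient vanishes. This is also consistent with the uniqueness in Theorem~\ref{thm:asymptotic-expansion}.

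\emph{Sesquilinearity and skew-Hermitian property.} Each term $a_j\overline{d_j}$ and $b_j\overline{c_j}$ is linear in the coordinates of $h$ and conjugate-linear in those of $\ell$, so $\omega_P$ is sesquilinear. Swapping $h$ and $\ell$ exchanges $(a,b)$ with $(c,d)$, so
\[
\omega_P(\ell,h)=\sum_j\bigl(c_j\overline{b_j}-d_j\overline{a_j}\bigr)=-\overline{\sum_j\bigl(a_j\overline{d_j}-b_j\overline{c_j}\bigr)}=-\overline{\omega_P(h,\ell)}.
\]
Equivalently, on basis vectors the defining relations satisfy $\omega_P(f^{\#}_{P,j},f_{P,k})=-\overline{\omega_P(f_{P,k},f^{\#}_{P,j})}$, and the remaining basis pairings are zero.

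\emph{Nondegeneracy.} Suppose $\omega_P(h,\ell)=0$ for all $\ell$. Choosing $\ell=f^{\#}_{P,k}$ gives $\omega_P(h,\ell)=a_k$, so $a_k=0$. Choosing $\ell=f_{P,k}$ gives $\omega_P(h,\ell)=-b_k$, so $b_k=0$. Hence $h=0$.

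Equivalently, the Gram matrix in the ordered basis $(f_{P,j})_j,(f^{\#}_{P,j})_j$ is
\[
\begin{pmatrix}0&I\\-I&0\end{pmatrix},
\]
which is invertible. By skew-Hermitian symmetry, the right radical also vanishes.

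There is no real obstacle here. The only step needing care is the well-definedness of $\omega_P$, i.e.\ the basis property above.
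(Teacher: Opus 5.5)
Your proof is correct and follows the paper's argument. The skew-Hermitian symmetry is read off the coordinate formula, and nondegeneracy follows by pairing against \(f_{P,j}^{\#}\) and \(f_{P,j}\) to force \(a_j=b_j=0\). Your preliminary check that the critical terms are linearly independent, which is what makes the coordinates and hence \(\omega_P\) well defined, is a point the paper leaves implicit.
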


\begin{proof}
Let
\(h=\sum_{|j|\leq n_P-1}
(a_jf_{P,j}+b_jf_{P,j}^{\#})\) and
\(\ell=\sum_{|j|\leq n_P-1}
(c_jf_{P,j}+d_jf_{P,j}^{\#})\). By definition,
\(\omega_P(h,\ell)
=\sum_{|j|\leq n_P-1}
(a_j\overline{d_j}-b_j\overline{c_j})\). It follows immediately that
\(\omega_P(\ell,h)=-\overline{\omega_P(h,\ell)}\), so \(\omega_P\) is
skew-Hermitian.

Suppose that \(\omega_P(h,\ell)=0\) for every \(\ell\in V_P\). Taking
\(\ell=f_{P,j}^{\#}\) gives \(a_j=0\), while taking
\(\ell=f_{P,j}\) gives \(b_j=0\), for every
\(|j|\leq n_P-1\). Thus \(h=0\), and hence \(\omega_P\) is
nondegenerate.
\end{proof}

\begin{lem}\label{lem:green-form-local}
Assume that \(S=\{P\}\). Let
\(u,v\in D_{\max}(\Delta_X)\), and let \(h_u,h_v\in V_P\) be their
critical asymptotic parts. Then
\(q_X(u,v)=\omega_P(h_u,h_v)\).
\end{lem}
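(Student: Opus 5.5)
The plan is to reduce everything to the cut-off critical parts and then compute an explicit boundary integral on a small circle. Fix a cutoff \(\chi\in C_c^\infty(D_P)\) equal to \(1\) near \(P\). By Corollary~\ref{thm:global-decomposition-single-point} we write \(u=\chi h_u+R_u\) and \(v=\chi h_v+R_v\) with \(R_u,R_v\in D_{\min}(\Delta_X)\). Lemma~\ref{lem:dmin-kernel} says \(q_X\) vanishes whenever either argument lies in \(D_{\min}(\Delta_X)\). Sesquilinearity then gives \(q_X(u,v)=q_X(\chi h_u,\chi h_v)\). Both sides of the claimed identity are sesquilinear in \((h_u,h_v)\), so it suffices to check the identity on pairs of basis elements \(f_{P,j},f_{P,j}^{\#}\).

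Since \(\chi h\) is smooth on \(X'\) and supported in \(D_P\), we have \(\Delta_X(\chi h)\in L^2\). Moreover \(h\) is harmonic on \(D_P'\), so \(\Delta_X(\chi h)\) is supported in the annulus where \(\nabla\chi\neq0\). Hence the integrals defining \(q_X(\chi h,\chi\ell)\) converge absolutely and can be computed as
\[
q_X(\chi h,\chi\ell)=\lim_{\varepsilon\to0}\int_{r\geq\varepsilon}\bigl(\Delta_X(\chi h)\,\overline{\chi\ell}-\chi h\,\overline{\Delta_X(\chi\ell)}\bigr)\,dA_X .
\]
On the region \(r\geq\varepsilon\) we apply Green's formula. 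The operator \(\Delta_X\,dA_X\) is conformally invariant: in polar coordinates \(\Delta_X w\,dA_X=-(r^{-1}\partial_r(r\partial_r w)+r^{-2}\partial_\theta^2w)\,r\,dr\,d\theta\), so the weight \(\rho r^{2n-2}\) cancels. The outer boundary contributes nothing because \(\chi\) vanishes there. The Green's-formula computation then yields
\[
q_X(\chi h,\chi\ell)=\lim_{\varepsilon\to0}\int_0^{2\pi}\bigl(h\,\overline{\partial_r\ell}-\partial_r h\,\overline{\ell}\bigr)\big|_{r=\varepsilon}\,\varepsilon\,d\theta ,
\]
where we used \(\chi=1\) near \(r=\varepsilon\). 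The sign should be double-checked: the operator is \(-\) the flat Laplacian, and the outward normal of \(\{r\geq\varepsilon\}\) is \(-\partial_r\).

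Next we evaluate the boundary integral mode by mode, writing \(W(h,\ell):=r\bigl(h\,\overline{\partial_r\ell}-\partial_r h\,\overline{\ell}\bigr)\).
\begin{itemize}
\item Orthogonality of \(e^{ij\theta}\) kills all cross terms between different \(j\).
\item For \(j=0\): with \(f_{P,0}=1/\sqrt{2\pi}\) and \(f^{\#}_{P,0}=-\log r/\sqrt{2\pi}\), we get \(W(f_{P,0},f^{\#}_{P,0})=-1/(2\pi)\). Integrating over \(\theta\) gives \(-1\), and with the sign from Green's formula this should become \(+1=\omega_P(f_{P,0},f_{P,0}^{\#})\).
\item For \(j\neq0\): with \(f_{P,j}=r^{|j|}e^{ij\theta}/\sqrt{4\pi|j|}\) and \(f^{\#}_{P,j}=r^{-|j|}e^{ij\theta}/\sqrt{4\pi|j|}\), we get \(W(f_{P,j},f^{\#}_{P,j})=-2|j|/(4\pi|j|)\). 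Integrating over \(\theta\) gives \(-1\), again matching \(\omega_P\) up to the global sign.
\item The pairs \((f,f)\) and \((f^{\#},f^{\#})\) give zero Wronskian, since they share the same radial exponent (for \(j=0\), constants against constants, and \(\log r\) against \(\log r\)).
\end{itemize}
Each of these quantities is independent of \(\varepsilon\), so the limit is trivial. The normalizing constants in the definition of \(f_{P,j}\) were chosen precisely so that these values come out to \(\pm1\).

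The main obstacle is bookkeeping rather than analysis: fixing the orientation and sign conventions in Green's formula so that the result is \(+\delta_{jk}\) and not \(-\delta_{jk}\). A secondary point is justifying the passage from the \(L^2\) integral to the limit over \(r\geq\varepsilon\). This follows by dominated convergence, since \(\Delta_X(\chi h)\) is compactly supported away from \(P\) and \(\chi h\in L^2\).
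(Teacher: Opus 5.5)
Your argument is the paper's proof. You reduce to \(q_X(\chi h_u,\chi h_v)\) via Corollary~\ref{thm:global-decomposition-single-point} and Lemma~\ref{lem:dmin-kernel}, then evaluate Green's formula on the normalized basis, and you carry out explicitly the computation the paper only asserts.

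One correction is needed in your boundary formula. Since \(\Delta_X\,dA_X=-\Delta_{\mathrm{flat}}\,dx\,dy\) and the outward normal on \(r=\varepsilon\) is \(-\partial_r\), the two minus signs cancel. The correct formula is \(q_X(\chi h,\chi\ell)=\varepsilon\int_0^{2\pi}\bigl(\partial_r h\,\overline{\ell}-h\,\overline{\partial_r\ell}\bigr)\big|_{r=\varepsilon}\,d\theta\), which is the negative of your display. With it, your Wronskian values \(-1\) become exactly \(+1=\omega_P(f_{P,j},f_{P,j}^{\#})\), and no further global sign adjustment is needed.
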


\begin{proof}
Choose a cutoff function \(\chi\in C_c^\infty(D_P)\) equal to \(1\)
near \(P\). By
Corollary~\ref{thm:global-decomposition-single-point}, write
\(u=\chi h_u+R_u\) and \(v=\chi h_v+R_v\), where
\(R_u,R_v\in D_{\min}(\Delta_X)\). Lemma~\ref{lem:dmin-kernel} gives
\(q_X(u,v)=q_X(\chi h_u,\chi h_v)\).
A direct application of Green's formula to the normalized critical
asymptotic terms gives
\(q_X(\chi f_{P,j},\chi f_{P,k}^{\#})=\delta_{jk}\),
\(q_X(\chi f_{P,j}^{\#},\chi f_{P,k})=-\delta_{jk}\), and
\(q_X(\chi f_{P,j},\chi f_{P,k})
=q_X(\chi f_{P,j}^{\#},\chi f_{P,k}^{\#})=0\) for
\(|j|,|k|\leq n_P-1\). Comparing these identities with the definition
of \(\omega_P\) and using sesquilinearity yields
\(q_X(u,v)=\omega_P(h_u,h_v)\).
\end{proof}

\begin{cor}\label{cor:critical-terms-not-minimal}
Assume that \(S=\{P\}\). Let
\(\chi\in C_c^\infty(D_P)\) be equal to \(1\) near \(P\). Then
\(\chi V_P\cap D_{\min}(\Delta_X)=\{0\}\). Equivalently, if
\(h\in V_P\) is nonzero, then
\(\chi h\notin D_{\min}(\Delta_X)\). In particular,
\(\chi f_{P,j}\) and \(\chi f_{P,j}^{\#}\) do not belong to
\(D_{\min}(\Delta_X)\) for any \(|j|\leq n_P-1\).
\end{cor}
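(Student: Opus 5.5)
The plan is to combine Lemma~\ref{lem:dmin-kernel}, Lemma~\ref{lem:green-form-local}, and the nondegeneracy of \(\omega_P\) from Lemma~\ref{lem:omega-p-nondegenerate}. Let \(h\in V_P\) with \(\chi h\in D_{\min}(\Delta_X)\); we show \(h=0\).

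First we check that \(\chi h\in D_{\max}(\Delta_X)\) and that its critical asymptotic part is exactly \(h\). Each critical term is harmonic on \(D_P'\), since it solves \(r^2u''+ru'-j^2u=0\) modewise. Hence \(\Delta_X(\chi h)\) is supported on the annulus where \(\nabla\chi\neq0\) and is smooth there. The function \(\chi h\) itself lies in \(L^2(X')\), because \(r^{-|j|}\) and \(\log r\) are square integrable against \(\rho(r)r^{2n_P-1}\,dr\) for \(|j|\le n_P-1\), as noted in the modewise analysis. Thus \(\chi h\in D_{\max}(\Delta_X)\). Writing \(\chi h=\chi h+0\) and using \(0\in D_{\min}(\Delta_X)\), the uniqueness statement of Corollary~\ref{thm:global-decomposition-single-point} shows that the critical asymptotic part of \(\chi h\) is \(h\).

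Next, take any \(\ell\in V_P\) and put \(v:=\chi\ell\in D_{\max}(\Delta_X)\). Its critical asymptotic part is \(\ell\) by the same argument. Lemma~\ref{lem:green-form-local} then gives
\[
q_X(\chi h,\chi\ell)=\omega_P(h,\ell).
\]
On the other hand, \(\chi h\in D_{\min}(\Delta_X)\), so Lemma~\ref{lem:dmin-kernel} gives \(q_X(\chi h,\chi\ell)=0\). Hence \(\omega_P(h,\ell)=0\) for every \(\ell\in V_P\), and Lemma~\ref{lem:omega-p-nondegenerate} yields \(h=0\). The equivalent formulations follow at once, and the statement about \(\chi f_{P,j}\) and \(\chi f_{P,j}^{\#}\) is the special case of nonzero basis vectors.

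The only point needing care is whether uniqueness of the critical part is already available. It is: both Theorem~\ref{thm:asymptotic-expansion} and Corollary~\ref{thm:global-decomposition-single-point} assert it. If one preferred not to rely on that uniqueness, one could argue directly. Take \(u=\chi h\) and compute \(q_X(\chi h,\chi\ell)\) via the explicit Green-formula values from the proof of Lemma~\ref{lem:green-form-local}; this gives \(\omega_P(h,\ell)\) by sesquilinearity alone. This route avoids uniqueness entirely and is the one I would actually write.
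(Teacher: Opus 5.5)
Your argument is correct and is the paper's proof: Lemma~\ref{lem:dmin-kernel} gives \(q_X(\chi h,\chi\ell)=0\), Lemma~\ref{lem:green-form-local} gives \(q_X(\chi h,\chi\ell)=\omega_P(h,\ell)\), and the nondegeneracy in Lemma~\ref{lem:omega-p-nondegenerate} forces \(h=0\). Your preferred variant, evaluating \(q_X(\chi h,\chi\ell)\) directly from the normalized Green-formula values, is exactly the computation inside the proof of Lemma~\ref{lem:green-form-local}; its advantage is that it does not presuppose that the critical part of \(\chi h\) is \(h\), which is essentially the uniqueness statement this corollary is equivalent to.
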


\begin{proof}
Suppose that \(h\in V_P\) and
\(\chi h\in D_{\min}(\Delta_X)\). For every \(\ell\in V_P\), the
function \(\chi\ell\) belongs to \(D_{\max}(\Delta_X)\). Hence
Lemma~\ref{lem:dmin-kernel} gives
\(q_X(\chi h,\chi\ell)=0\). On the other hand,
Lemma~\ref{lem:green-form-local} gives
\(q_X(\chi h,\chi\ell)=\omega_P(h,\ell)\). Thus
\(\omega_P(h,\ell)=0\) for every \(\ell\in V_P\). Since \(\omega_P\)
is nondegenerate, \(h=0\).
\end{proof}

\begin{cor}\label{cor:dmin-proper}
Assume that \(S=\{P\}\). Then
\(D_{\min}(\Delta_X)\subsetneq D_{\max}(\Delta_X)\).
\end{cor}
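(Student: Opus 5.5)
To prove Corollary~\ref{cor:dmin-proper}, I will exhibit an explicit element of \(D_{\max}(\Delta_X)\setminus D_{\min}(\Delta_X)\), using Corollary~\ref{cor:critical-terms-not-minimal}. Fix a cutoff function \(\chi\in C_c^\infty(D_P)\) equal to \(1\) near \(P\), and take the simplest nonzero critical term, \(h=f_{P,0}=1/\sqrt{2\pi}\). Any \(f_{P,j}\) or \(f_{P,j}^{\#}\) would work equally well. The candidate element is \(\chi f_{P,0}\), extended by zero to \(X'\).

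The first step is to check that \(\chi h\in D_{\max}(\Delta_X)\) for every \(h\in V_P\). Square integrability holds because the area form is \(dA_X=\rho_P(r)r^{2n_P-1}\,dr\,d\theta\) on \(D_P'\). Each critical term, including \(r^{-|j|}\) with \(|j|\leq n_P-1\) and \(\log r\), is therefore square integrable near \(r=0\), as already noted in the modewise analysis. For the Laplacian, observe that each critical term is harmonic on \(D_P'\), since it solves the homogeneous mode equation \eqref{eq:fourier-ode} with \(F_j=0\). Hence \(\Delta_X(\chi h)\) equals a commutator term involving \(\nabla\chi\) and \(\Delta\chi\). This term is smooth and supported in the annulus where \(\chi\) is nonconstant, so it lies in \(L^2(X')\). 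One must also verify that no distributional contribution arises at \(P\). This is automatic, because the test functions in the definition belong to \(C_c^\infty(X')\) and so avoid \(P\). This membership was in fact already used, without comment, in the proof of Corollary~\ref{cor:critical-terms-not-minimal}.

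The second step applies Corollary~\ref{cor:critical-terms-not-minimal} to the nonzero element \(h=f_{P,0}\), which gives \(\chi f_{P,0}\notin D_{\min}(\Delta_X)\). Combining the two steps, \(\chi f_{P,0}\in D_{\max}(\Delta_X)\setminus D_{\min}(\Delta_X)\). Since \(D_{\min}(\Delta_X)\subset D_{\max}(\Delta_X)\) by definition, the inclusion is proper.

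There is no real obstacle here. The only point needing care is the membership \(\chi h\in D_{\max}(\Delta_X)\), specifically confirming that the distributional Laplacian contains no delta-type term at \(P\). The test-function class \(C_c^\infty(X')\) rules this out. If desired, the dimension count \(\dim V_P=2(2n_P-1)\geq 6\) also shows that the quotient \(D_{\max}(\Delta_X)/D_{\min}(\Delta_X)\) is nontrivial.
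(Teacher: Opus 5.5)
Your proposal is correct and matches the paper's proof, which likewise takes \(\chi f_{P,0}\), notes that it lies in \(D_{\max}(\Delta_X)\), and uses Corollary~\ref{cor:critical-terms-not-minimal} to conclude that it does not lie in \(D_{\min}(\Delta_X)\). Your extra check that \(\chi h\in D_{\max}(\Delta_X)\) only fills in a step the paper states without comment: the critical terms are harmonic on \(D_P'\), so \(\Delta_X(\chi h)\) is supported where \(\chi\) is nonconstant, and the test functions avoid \(P\).
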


\begin{proof}
Choose a cutoff function \(\chi\in C_c^\infty(D_P)\) equal to \(1\)
near \(P\). Then \(\chi f_{P,0}\in D_{\max}(\Delta_X)\), whereas
Corollary~\ref{cor:critical-terms-not-minimal} gives
\(\chi f_{P,0}\notin D_{\min}(\Delta_X)\). Hence the inclusion is
strict.
\end{proof}

The left radical of \(q_X\) is defined by
\(\operatorname{rad}_{L}q_X
:=\{u\in D_{\max}(\Delta_X):
q_X(u,v)=0\text{ for every }v\in D_{\max}(\Delta_X)\}\).
Similarly, its right radical is
\(\operatorname{rad}_{R}q_X
:=\{v\in D_{\max}(\Delta_X):
q_X(u,v)=0\text{ for every }u\in D_{\max}(\Delta_X)\}\).
Since \(q_X\) is skew-Hermitian, these two subspaces coincide. We denote
their common value by \(\operatorname{rad}q_X\). By
Lemma~\ref{lem:dmin-kernel}, one has
\(D_{\min}(\Delta_X)\subset\operatorname{rad}q_X\).

\begin{cor}\label{cor:rad-q-dmin}
The radical of \(q_X\) is precisely the minimal domain:
\(\operatorname{rad}q_X=D_{\min}(\Delta_X)\).
\end{cor}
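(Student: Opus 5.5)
The inclusion \(D_{\min}(\Delta_X)\subset\operatorname{rad}q_X\) is already given by Lemma~\ref{lem:dmin-kernel}, so only the reverse inclusion needs proof. We work in the setting \(S=\{P\}\) treated so far; the case of several conic points follows by the same argument, using the pointwise construction with cutoffs supported in disjoint disks.

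Let \(u\in\operatorname{rad}q_X\), and let \(h_u\in V_P\) be its critical asymptotic part from Corollary~\ref{thm:global-decomposition-single-point}. Fix a cutoff \(\chi\in C_c^\infty(D_P)\) equal to \(1\) near \(P\). For every \(\ell\in V_P\), the function \(v:=\chi\ell\) lies in \(D_{\max}(\Delta_X)\). Indeed, \(\Delta_X\chi\ell=\chi\Delta_X\ell+[\Delta_X,\chi]\ell\). Each critical term is harmonic on \(D_P'\), and the commutator term is smooth and compactly supported in \(X'\). The critical asymptotic part of \(v\) is \(\ell\) itself, since \(v-\chi\ell=0\in D_{\min}(\Delta_X)\) and the part is unique. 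Lemma~\ref{lem:green-form-local} therefore gives
\[
0=q_X(u,\chi\ell)=\omega_P(h_u,\ell)\qquad\text{for every }\ell\in V_P .
\]
By Lemma~\ref{lem:omega-p-nondegenerate}, \(\omega_P\) is nondegenerate, so \(h_u=0\). Corollary~\ref{thm:global-decomposition-single-point} then gives \(u=\chi\cdot 0+R_\chi=R_\chi\in D_{\min}(\Delta_X)\), which proves \(\operatorname{rad}q_X\subset D_{\min}(\Delta_X)\).

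For several conic points, we would first extend Corollary~\ref{thm:global-decomposition-single-point} to \(S\) by localization. Choose disjoint disks \(D_P\) and cutoffs \(\chi_P\). Applying the single-point analysis to each \(\chi_P u\), whose Laplacian lies in \(L^2\), yields a decomposition \(u=\sum_P\chi_Ph_P+R\) with \(R\in D_{\min}(\Delta_X)\), by Lemma~\ref{lem:away-from-conic} together with the local statement of Theorem~\ref{thm:asymptotic-expansion}. Green's formula is local, so \(q_X\) splits as \(\sum_P\omega_P\). Testing against \(\chi_P\ell\) for each \(P\) separately then forces every \(h_P\) to vanish.

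The main point requiring care is this localization. The single-point results were stated under the hypothesis \(S=\{P\}\), so we must confirm that their proofs only use the structure near \(P\) and the interior lemma, and therefore apply verbatim. Once that is checked, the argument above is purely the finite-dimensional nondegeneracy of \(\omega_P\).
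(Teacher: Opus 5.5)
Your proposal is correct and follows essentially the same route as the paper. For \(u\in\operatorname{rad}q_X\) you test against \(\chi\ell\) with \(\ell\in V_P\), use the local Green formula to get \(\omega_P(h_u,\ell)=0\), conclude \(h_u=0\) by nondegeneracy, and obtain \(u=R_\chi\in D_{\min}(\Delta_X)\). The only cosmetic difference is that the paper first reduces \(q_X(u,\chi\ell)\) to \(q_X(\chi h_u,\chi\ell)\) via Lemma~\ref{lem:dmin-kernel}, whereas you apply Lemma~\ref{lem:green-form-local} directly. Your sketch of the several-point case matches what the paper later does in Lemma~\ref{lem:global-remainder}, Lemma~\ref{lem:green-form-global}, and Theorem~\ref{thm:quotient-general}.
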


\begin{proof}
Lemma~\ref{lem:dmin-kernel} gives
\(D_{\min}(\Delta_X)\subset\operatorname{rad}q_X\).
Conversely, let \(u\in\operatorname{rad}q_X\), and let \(h_u\in V_P\)
be its critical asymptotic part at \(P\). Choose
\(\chi\in C_c^\infty(D_P)\) equal to \(1\) near \(P\), and set
\(R:=u-\chi h_u\). By
Corollary~\ref{thm:global-decomposition-single-point}, one has
\(R\in D_{\min}(\Delta_X)\).
For every \(\ell\in V_P\), the function \(\chi\ell\) belongs to
\(D_{\max}(\Delta_X)\). Since \(u\in\operatorname{rad}q_X\), one has
\(q_X(u,\chi\ell)=0\). Lemma~\ref{lem:dmin-kernel} and
Lemma~\ref{lem:green-form-local} give
\[
0
=
q_X(u,\chi\ell)
=
q_X(\chi h_u,\chi\ell)
=
\omega_P(h_u,\ell).
\]
Since \(\omega_P\) is nondegenerate, \(h_u=0\). Therefore
\(u=R\in D_{\min}(\Delta_X)\), proving the reverse inclusion.
\end{proof}

\begin{thm}\label{thm:quotient-is-Vp}
Assume that \(S=\{P\}\). Let
\(\pi_P:D_{\max}(\Delta_X)\to V_P\) be the map that assigns to each
\(u\in D_{\max}(\Delta_X)\) its critical asymptotic part at \(P\).
Then \(\pi_P\) is a surjective linear map and
\(\ker\pi_P=D_{\min}(\Delta_X)\). Consequently, \(\pi_P\) induces a
linear isomorphism
\[
\overline{\pi}_P:
D_{\max}(\Delta_X)/D_{\min}(\Delta_X)
\longrightarrow V_P.
\]
In particular,
\(\dim_{\mathbb C}\bigl(D_{\max}(\Delta_X)/
D_{\min}(\Delta_X)\bigr)=4n_P-2\).
\end{thm}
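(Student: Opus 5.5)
The proof assembles results already established in this section. There are four steps: linearity, surjectivity, the kernel identification, and the dimension count.

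\textbf{Linearity.} This follows from the uniqueness in Corollary~\ref{thm:global-decomposition-single-point}. Fix one cutoff \(\chi\in C_c^\infty(D_P)\) equal to \(1\) near \(P\). Suppose \(u=\chi h_u+R_u\) and \(v=\chi h_v+R_v\) with \(R_u,R_v\in D_{\min}(\Delta_X)\). Then for \(\alpha,\beta\in\mathbb C\),
\[
\alpha u+\beta v=\chi(\alpha h_u+\beta h_v)+(\alpha R_u+\beta R_v),
\]
and \(\alpha R_u+\beta R_v\in D_{\min}(\Delta_X)\) because the minimal domain is a linear subspace. Uniqueness of the critical asymptotic part then gives \(\pi_P(\alpha u+\beta v)=\alpha\pi_P(u)+\beta\pi_P(v)\).

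\textbf{Surjectivity.} Take \(h\in V_P\). The function \(\chi h\), extended by zero, is smooth on \(X'\) and lies in \(L^2(X')\), since \(\log r\) and \(r^{-|j|}\) with \(|j|\le n_P-1\) are square integrable against \(r^{2n_P-1}\,dr\). Moreover, \(\Delta_X(\chi h)\) is supported in the annulus where \(\nabla\chi\neq0\), because each critical term is harmonic on \(D_P'\). Hence \(\Delta_X(\chi h)\in L^2(X')\), so \(\chi h\in D_{\max}(\Delta_X)\). The decomposition \(\chi h=\chi h+0\) with \(0\in D_{\min}(\Delta_X)\), together with uniqueness, gives \(\pi_P(\chi h)=h\).

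\textbf{Kernel.} If \(\pi_P(u)=0\), then \(u=R_\chi\in D_{\min}(\Delta_X)\) by Corollary~\ref{thm:global-decomposition-single-point}. Conversely, if \(u\in D_{\min}(\Delta_X)\), then \(u=\chi\cdot0+u\) is an admissible decomposition, so uniqueness gives \(\pi_P(u)=0\). An alternative for this inclusion uses Lemma~\ref{lem:dmin-kernel} and Lemma~\ref{lem:green-form-local}: they give \(\omega_P(\pi_P u,\ell)=q_X(u,\chi\ell)=0\) for all \(\ell\in V_P\), and nondegeneracy (Lemma~\ref{lem:omega-p-nondegenerate}) forces \(\pi_P u=0\). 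This is the same argument as in Corollary~\ref{cor:rad-q-dmin}. The first isomorphism theorem then yields \(\overline{\pi}_P\).

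\textbf{Dimension count.} It remains to show that the \(2(2n_P-1)=4n_P-2\) functions \(f_{P,j},f_{P,j}^{\#}\), \(|j|\le n_P-1\), are linearly independent. The quickest route is again nondegeneracy of the pairing. In the basis \((f_{P,j})\), \((f_{P,j}^{\#})\), the form \(\omega_P\) is the standard pairing matrix, so a vanishing linear combination would have all coefficients zero by pairing against \(f^{\#}_{P,k}\) and \(f_{P,k}\). One can also see independence directly: distinct angular modes are independent, and within each mode \(r^{|j|}\) and \(r^{-|j|}\) (respectively \(1\) and \(\log r\)) are independent functions of \(r\).

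There is one subtle point. The uniqueness assertion in Corollary~\ref{thm:global-decomposition-single-point} must be applied with a fixed cutoff, and its independence of the choice of \(\chi\) must be respected. The corollary already guarantees both, so I expect no real obstacle here.
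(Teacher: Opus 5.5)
Your proposal is correct and follows the paper's proof essentially step for step. Linearity and the kernel identification come from the uniqueness in Corollary~\ref{thm:global-decomposition-single-point}, surjectivity from \(\pi_P(\chi h)=h\), and the isomorphism from the first isomorphism theorem; you additionally verify \(\chi h\in D_{\max}(\Delta_X)\) and the linear independence behind \(\dim_{\mathbb C}V_P=4n_P-2\), which the paper takes for granted. One small caution: your \emph{quickest route} to independence via \(\omega_P\) is circular, because \(\omega_P\) is defined by prescribing its values on these very functions, which presupposes that they form a basis. Either pair against \(q_X(\cdot,\chi f_{P,k}^{\#})\) and \(q_X(\cdot,\chi f_{P,k})\), which are computed directly from Green's formula, or rely on your direct mode-by-mode argument, which is fine.
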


\begin{proof}
The linearity of \(\pi_P\) follows from the uniqueness of the critical
asymptotic part. We first identify the kernel of \(\pi_P\). By
Corollary~\ref{thm:global-decomposition-single-point}, an element
\(u\in D_{\max}(\Delta_X)\) has vanishing critical asymptotic part if
and only if \(u\in D_{\min}(\Delta_X)\). Therefore,
\(\ker\pi_P=D_{\min}(\Delta_X)\).
To prove surjectivity, let \(h\in V_P\), and choose
\(\chi\in C_c^\infty(D_P)\) equal to \(1\) near \(P\). Then
\(\chi h\in D_{\max}(\Delta_X)\), and its critical asymptotic part is
\(h\). Hence \(\pi_P(\chi h)=h\).
The asserted isomorphism now follows from the first isomorphism
theorem. Finally,
\(\dim_{\mathbb C}V_P=2(2n_P-1)=4n_P-2\).
\end{proof}

We now return to the case of an arbitrary finite conic set \(S\).

\begin{lem}\label{lem:global-remainder}
Let \(\chi_P\in C_c^\infty(D_P)\), \(P\in S\), be cutoff
functions equal to \(1\) near \(P\), with pairwise disjoint supports.
For \(u\in D_{\max}(\Delta_X)\), let \(h_{u,P}\in V_P\) be its
critical asymptotic part at \(P\). Then
\[
R
:=
u-\sum_{P\in S}\chi_P h_{u,P}
\in D_{\min}(\Delta_X).
\]
\end{lem}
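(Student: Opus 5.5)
The plan is to localize \(u\) near each conic point with a partition of unity. Each localized piece then reduces to the single-point situation already treated. What remains is a piece supported away from \(S\), which Lemma~\ref{lem:away-from-conic} handles.

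For each \(P\in S\) choose a second cutoff \(\psi_P\in C_c^\infty(D_P)\) equal to \(1\) on a neighborhood of \(\operatorname{supp}\chi_P\). The supports of the \(\psi_P\) are pairwise disjoint; this is possible since the \(D_P\) may be shrunk to be disjoint. Write
\[
R=\sum_{P\in S}\bigl(\psi_Pu-\chi_Ph_{u,P}\bigr)+\Bigl(1-\sum_{P\in S}\psi_P\Bigr)u .
\]
The last term lies in \(D_{\max}(\Delta_X)\) by interior elliptic regularity, exactly as in Corollary~\ref{thm:global-decomposition-single-point}: \(\Delta_X\) of a smooth cutoff times \(u\) involves \(\nabla u\), which is \(L^2_{\mathrm{loc}}\) on \(X'\). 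That term vanishes near every conic point, so its support is compactly contained in \(X'\), and Lemma~\ref{lem:away-from-conic} places it in \(D_{\min}(\Delta_X)\).

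For the local terms, fix \(P\). The critical asymptotic part \(h_{u,P}\) is defined through the expansion of Theorem~\ref{thm:asymptotic-expansion} on \(D_P'\). That expansion is purely local: its proof uses only the modewise ODE analysis on \(D_P'\) together with the localized statement of \cite{gjb}, which concerns elements of \(D_{\max}(\Delta_X)\) supported in \(D_P\). Hence, exactly as in the single-point case, \(u=h_{u,P}+R_{0,P}\) on \(D_P'\), with \(\chi R_{0,P}\in D_{\min}(\Delta_X)\) for any cutoff \(\chi\) supported in \(D_P\) and equal to \(1\) near \(P\). Then
\[
\psi_Pu-\chi_Ph_{u,P}=\psi_PR_{0,P}+(\psi_P-\chi_P)h_{u,P}.
\]
The first summand is in \(D_{\min}(\Delta_X)\) by the local statement with \(\chi=\psi_P\). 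The second summand is smooth, compactly supported in \(D_P'\) because \(\psi_P-\chi_P\) vanishes near \(P\), and lies in \(D_{\max}(\Delta_X)\); Lemma~\ref{lem:away-from-conic} places it in \(D_{\min}(\Delta_X)\). Summing over \(P\) and using that \(D_{\min}(\Delta_X)\) is a linear subspace gives \(R\in D_{\min}(\Delta_X)\).

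The main point requiring care is justifying that Theorem~\ref{thm:asymptotic-expansion}, stated for \(S=\{P\}\), applies locally when \(S\) has several points. I would argue this by noting that its proof never uses the global hypothesis. Alternatively, one can apply the single-point analysis to \(\psi_Pu\), which lies in \(D_{\max}(\Delta_X)\), is supported in \(D_P\), and agrees with \(u\) near \(P\). The minimal domain criterion of \cite{gjb} (Proposition~3.6) is local near \(P\), since functions supported in \(D_P\) do not see the other conic points. This local reading also fixes the meaning of the phrase "critical asymptotic part at \(P\)" in the statement, and I take that meaning as part of the definition.
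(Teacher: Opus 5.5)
Your proof is correct and follows the paper's argument: the paper writes \(R=\sum_{P\in S}\chi_P(u-h_{u,P})+\eta u\) with \(\eta=1-\sum_{P\in S}\chi_P\). It applies the local remainder statement to each \(\chi_P(u-h_{u,P})\) and Lemma~\ref{lem:away-from-conic} to \(\eta u\), relying on the same local reading of Theorem~\ref{thm:asymptotic-expansion} that you flag. Your auxiliary cutoffs \(\psi_P\) are harmless but unnecessary: taking \(\psi_P=\chi_P\) removes the correction term \((\psi_P-\chi_P)h_{u,P}\) and gives exactly the paper's decomposition.
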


\begin{proof}
Set \(\eta:=1-\sum_{P\in S}\chi_P\). Since the supports of the
cutoff functions are pairwise disjoint and each \(\chi_P\) is equal
to \(1\) near \(P\), the function \(\eta\) vanishes in a neighborhood
of \(S\). Since \(X\) is compact, it follows that
\(\operatorname{supp}\eta\Subset X'\), and hence
\(\operatorname{supp}(\eta u)\Subset X'\).
By interior elliptic regularity,
\(u\in H^2_{\mathrm{loc}}(X')\). Therefore,
\(\eta u\in D_{\max}(\Delta_X)\).
Lemma~\ref{lem:away-from-conic} then gives
\(\eta u\in D_{\min}(\Delta_X)\).
Moreover,
\[
R
=
u-\sum_{P\in S}\chi_P h_{u,P}
=
\sum_{P\in S}\chi_P(u-h_{u,P})+\eta u.
\]
By the local remainder statement,
\(\chi_P(u-h_{u,P})\in D_{\min}(\Delta_X)\) for every \(P\in S\).
Since \(S\) is finite, it follows that
\(R\in D_{\min}(\Delta_X)\).
\end{proof}

For each \(P\in S\), let \(V_P\) and \(\omega_P\) be the critical
asymptotic space and the normalized local Green form defined above.
Set \(V_S:=\bigoplus_{P\in S}V_P\). For
\(h=(h_P)_{P\in S}\) and \(\ell=(\ell_P)_{P\in S}\) in \(V_S\),
define
\(\omega_S(h,\ell):=\sum_{P\in S}\omega_P(h_P,\ell_P)\).
Then \(\omega_S\) is a nondegenerate skew-Hermitian sesquilinear form
on \(V_S\).

For each \(P\in S\), let
\(\pi_P:D_{\max}(\Delta_X)\to V_P\) be the map that assigns to
\(u\in D_{\max}(\Delta_X)\) its critical asymptotic part at \(P\).
Define the global critical asymptotic map
\(\pi_S:D_{\max}(\Delta_X)\to V_S\) by
\(\pi_S(u):=(\pi_P(u))_{P\in S}\).

\begin{lem}\label{lem:green-form-global}
For every \(u,v\in D_{\max}(\Delta_X)\), one has
\(q_X(u,v)=\omega_S\bigl(\pi_S(u),\pi_S(v)\bigr)\).
\end{lem}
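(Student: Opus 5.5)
The plan is to reduce to the single-point computation of Lemma~\ref{lem:green-form-local} by means of the global remainder decomposition. Fix cutoff functions \(\chi_P\in C_c^\infty(D_P)\), \(P\in S\), equal to \(1\) near \(P\) and with pairwise disjoint supports. For \(u,v\in D_{\max}(\Delta_X)\), Lemma~\ref{lem:global-remainder} gives
\[
u=\sum_{P\in S}\chi_P\pi_P(u)+R_u,\qquad
v=\sum_{Q\in S}\chi_Q\pi_Q(v)+R_v,
\]
with \(R_u,R_v\in D_{\min}(\Delta_X)\). Each \(\chi_P h\) with \(h\in V_P\) lies in \(D_{\max}(\Delta_X)\), since it is smooth on \(X'\) and agrees near \(P\) with a harmonic function there. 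Lemma~\ref{lem:dmin-kernel} says that \(q_X\) vanishes whenever one argument lies in \(D_{\min}(\Delta_X)\). Hence sesquilinearity gives
\[
q_X(u,v)=\sum_{P,Q\in S}q_X\bigl(\chi_P\pi_P(u),\chi_Q\pi_Q(v)\bigr).
\]

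Next I dispose of the off-diagonal terms. If \(P\neq Q\), the supports of \(\chi_P\pi_P(u)\) and \(\chi_Q\pi_Q(v)\) are disjoint. Applying \(\Delta_X\) does not enlarge supports, so both \(L^2\)-inner products in the definition of \(q_X\) vanish. Thus only the diagonal terms \(P=Q\) survive.

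The remaining task, and the only real point, is to show that
\[
q_X(\chi_Ph,\chi_P\ell)=\omega_P(h,\ell)\qquad\text{for } h,\ell\in V_P.
\]
Lemma~\ref{lem:green-form-local} is stated under the assumption \(S=\{P\}\), but this quantity is purely local. Both functions are supported in \(D_P\), so \(q_X(\chi_Ph,\chi_P\ell)\) is an integral over \(\operatorname{supp}\chi_P\setminus\{P\}\) and does not see the other conic points. I will therefore rerun the Green's formula computation behind Lemma~\ref{lem:green-form-local} directly. Integrate over the annulus \(\varepsilon<r<1\) and let \(\varepsilon\to0\). The outer boundary contributes nothing because \(\chi_P\) vanishes there. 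On \(r=\varepsilon\) the boundary term is
\[
\int_0^{2\pi}\bigl(-\partial_r(\chi h)\,\overline{\chi\ell}+\chi h\,\overline{\partial_r(\chi\ell)}\bigr)\,r\,d\theta,
\]
where the conformal factor cancels between \(\Delta_X\) and \(dA_X\). For the modes \(1,-\log r\) and \(r^{\pm|j|}e^{ij\theta}\), this Wronskian is exactly \(r\)-independent. With the normalizations \(1/\sqrt{2\pi}\) and \(1/\sqrt{4\pi|j|}\), it yields \(\delta_{jk}\) for the pair \((f_{P,j},f^{\#}_{P,k})\) and \(0\) for pairs of the same type, the latter by angular orthogonality and the vanishing of the Wronskian.

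An alternative is to transplant to an auxiliary single-point situation. That is not needed, since the local computation is independent of \(S\). I expect the main obstacle to be only this bookkeeping: checking the sign and normalization conventions in the Wronskian, in particular that \(f^{\#}_{P,0}=-\log r/\sqrt{2\pi}\) gives \(+1\) rather than \(-1\). Finally, summing over \(P\) yields
\[
q_X(u,v)=\sum_{P\in S}\omega_P\bigl(\pi_P(u),\pi_P(v)\bigr)=\omega_S\bigl(\pi_S(u),\pi_S(v)\bigr).
\]
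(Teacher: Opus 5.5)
Your argument is the paper's: decompose \(u\) and \(v\) via Lemma~\ref{lem:global-remainder}, discard the \(D_{\min}\) remainders by Lemma~\ref{lem:dmin-kernel}, kill the cross terms by disjointness of supports, and apply the local Green formula at each \(P\). Your observation that Lemma~\ref{lem:green-form-local} is purely local, so its hypothesis \(S=\{P\}\) is harmless, is something the paper uses tacitly.

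The one real defect is in the local computation you propose to redo: the boundary term you display has the wrong sign. Because \(\Delta_X\) is the nonnegative Laplacian, \((\Delta_Xu)\,dA_X=-(\Delta_0u)\,r\,dr\,d\theta\) with \(\Delta_0=\partial_r^2+r^{-1}\partial_r+r^{-2}\partial_\theta^2\). Hence \(q_X(u,v)=\int(u\,\Delta_0\overline{v}-\overline{v}\,\Delta_0u)\,dx\). Moreover, the outward normal of the annulus on \(r=\varepsilon\) is \(-\partial_r\). Green's identity therefore gives
\[
q_X(\chi h,\chi\ell)=\lim_{\varepsilon\to0}\int_0^{2\pi}\bigl(\partial_r(\chi h)\,\overline{\chi\ell}-\chi h\,\overline{\partial_r(\chi\ell)}\bigr)\,\varepsilon\,d\theta,
\]
which is the negative of your expression.

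With your sign, \(h=f_{P,0}\), \(\ell=f_{P,0}^{\#}\) gives \(\int_0^{2\pi}\frac{1}{\sqrt{2\pi}}\cdot\frac{-1}{\varepsilon\sqrt{2\pi}}\,\varepsilon\,d\theta=-1\), and the pair \((f_{P,j},f_{P,j}^{\#})\) also gives \(-1\). Carried out as written, your computation proves \(q_X=-\omega_S\), which is exactly the failure you flagged as the thing to check. With the corrected sign, both pairings equal \(+1\); for \(j\neq0\) the integrand is \(\varepsilon\cdot2|j|\varepsilon^{-1}/(4\pi|j|)\). The same-type pairings still vanish, and the rest of your argument goes through unchanged.
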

\begin{proof}
Choose cutoff functions
\(\chi_P\in C_c^\infty(D_P)\), \(P\in S\), equal to \(1\) near \(P\),
with pairwise disjoint supports. By
Lemma~\ref{lem:global-remainder}, write
\[
u=\sum_{P\in S}\chi_P h_{u,P}+R_u,
\qquad
v=\sum_{P\in S}\chi_P h_{v,P}+R_v,
\]
where \(R_u,R_v\in D_{\min}(\Delta_X)\) and
\(h_{u,P}=\pi_P(u)\), \(h_{v,P}=\pi_P(v)\).
Lemma~\ref{lem:dmin-kernel} gives
\[
q_X(u,v)
=
q_X\left(
\sum_{P\in S}\chi_P h_{u,P},
\sum_{P\in S}\chi_P h_{v,P}
\right).
\]
Since the supports of the cutoff functions are pairwise disjoint, all
cross terms corresponding to distinct conic points vanish. Hence
\[
q_X(u,v)
=
\sum_{P\in S}
q_X(\chi_P h_{u,P},\chi_P h_{v,P}).
\]
By the local Green formula,
\(q_X(\chi_P h_{u,P},\chi_P h_{v,P})
=\omega_P(h_{u,P},h_{v,P})\).
Therefore,
\[
q_X(u,v)
=
\sum_{P\in S}\omega_P\bigl(\pi_P(u),\pi_P(v)\bigr)
=
\omega_S\bigl(\pi_S(u),\pi_S(v)\bigr).
\]
\end{proof}

\begin{cor}\label{cor:critical-asymptotic-projection}
For each \(P\in S\), let
\(\chi_P\in C_c^\infty(D_P)\) be equal to \(1\) near \(P\). Then, for
every \(u\in D_{\max}(\Delta_X)\),
\[
\pi_P(u)
=
\sum_{|j|\leq n_P-1}
\left(
q_X(u,\chi_Pf_{P,j}^{\#})f_{P,j}
-
q_X(u,\chi_Pf_{P,j})f_{P,j}^{\#}
\right).
\]
In particular, the right-hand side is independent of the choice of
\(\chi_P\).
\end{cor}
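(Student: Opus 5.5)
The plan is to compute the coefficients of \(\pi_P(u)\) by pairing \(u\) against the cutoff critical terms, using Lemma~\ref{lem:green-form-global} together with the defining relations of \(\omega_P\). Write
\[
\pi_P(u)=\sum_{|j|\leq n_P-1}\bigl(a_jf_{P,j}+b_jf_{P,j}^{\#}\bigr),
\]
which is possible because the \(f_{P,j},f_{P,j}^{\#}\) form a basis of \(V_P\) (by construction, or by the nondegeneracy argument of Lemma~\ref{lem:omega-p-nondegenerate}).

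First, I will compute \(\pi_S(\chi_P\ell)\) for \(\ell\in V_P\). Near \(P\) the function \(\chi_P\ell\) equals \(\ell\), so its critical asymptotic part at \(P\) is \(\ell\); this follows from uniqueness in Corollary~\ref{thm:global-decomposition-single-point}, or directly from the surjectivity argument in Theorem~\ref{thm:quotient-is-Vp}. Since \(\chi_P\ell\) vanishes near every other \(Q\in S\), its critical part at each such \(Q\) is \(0\): the decomposition \(0+\chi_P\ell\) is valid there, and by uniqueness it is the one. Hence \(\pi_S(\chi_P\ell)\) is \(\ell\) placed in the \(P\)-slot, with zeros in all other slots.

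Next, I will apply Lemma~\ref{lem:green-form-global}. For \(v=\chi_P f_{P,k}^{\#}\) it gives
\[
q_X(u,\chi_Pf_{P,k}^{\#})=\omega_P\bigl(\pi_P(u),f_{P,k}^{\#}\bigr).
\]
Sesquilinearity and the defining values of \(\omega_P\) reduce this to \(a_k\), since the pairings \(\omega_P(f_{P,j}^{\#},f_{P,k}^{\#})\) all vanish. Similarly, \(q_X(u,\chi_Pf_{P,k})=\omega_P(\pi_P(u),f_{P,k})=-b_k\). Substituting \(a_k\) and \(-b_k\) back into the expansion of \(\pi_P(u)\) gives exactly the displayed formula.

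Independence of \(\chi_P\) is then automatic, because the left side \(\pi_P(u)\) does not involve \(\chi_P\). Alternatively, it follows directly: the difference of two cutoffs applied to \(\ell\) is compactly supported in \(X'\), so it lies in \(D_{\min}(\Delta_X)\) by Lemma~\ref{lem:away-from-conic}, and Lemma~\ref{lem:dmin-kernel} then kills its contribution.

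The only mildly delicate step is the first one: \(\chi_P\) is not assumed to belong to the family of disjointly supported cutoffs used in Lemma~\ref{lem:global-remainder}. This causes no difficulty, because \(\pi_S\) is defined intrinsically, and the computation of \(\pi_S(\chi_P\ell)\) uses only local uniqueness at each conic point.
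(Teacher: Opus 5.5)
Your proposal is correct and follows the paper's proof. Both expand \(\pi_P(u)\) in the critical basis, obtain \(q_X(u,\chi_Pf_{P,j}^{\#})=a_j\) and \(q_X(u,\chi_Pf_{P,j})=-b_j\) from Lemma~\ref{lem:green-form-global} and the normalization of \(\omega_P\), substitute back, and read off independence of \(\chi_P\) from the left-hand side; your explicit check that \(\pi_S(\chi_P\ell)\) is \(\ell\) in the \(P\)-slot and zero elsewhere fills in a step the paper leaves implicit.
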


\begin{proof}
Write
\[
\pi_P(u)
=
\sum_{|j|\leq n_P-1}
\left(
a_jf_{P,j}+b_jf_{P,j}^{\#}
\right).
\]
By Lemma~\ref{lem:green-form-global} and the normalization of the
critical asymptotic terms, one has
\[
q_X(u,\chi_Pf_{P,j}^{\#})=a_j,
\qquad
q_X(u,\chi_Pf_{P,j})=-b_j.
\]
Substituting these identities into the expansion of \(\pi_P(u)\)
gives the asserted formula. Since \(\chi_P\) was arbitrary, the
right-hand side is independent of its choice.
\end{proof}

\begin{thm}\label{thm:quotient-general}
The map \(\pi_S\) is a surjective linear map with
\(\ker\pi_S=D_{\min}(\Delta_X)\). Consequently, it induces a linear
isomorphism
\[
\overline{\pi}_S:
D_{\max}(\Delta_X)/D_{\min}(\Delta_X)
\longrightarrow V_S,
\qquad
[u]\longmapsto\pi_S(u).
\]
In particular,
\(\dim_{\mathbb C}\bigl(D_{\max}(\Delta_X)/
D_{\min}(\Delta_X)\bigr)
=\sum_{P\in S}(4n_P-2)\).
\end{thm}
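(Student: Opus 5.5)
The argument repeats the single-point proof of Theorem~\ref{thm:quotient-is-Vp}, replacing Corollary~\ref{thm:global-decomposition-single-point} by Lemma~\ref{lem:global-remainder} and Lemma~\ref{lem:green-form-local} by Lemma~\ref{lem:green-form-global}. I will check linearity, identify the kernel, prove surjectivity, and then count dimensions.

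\textbf{Linearity.} Each \(\pi_P\) is well defined and linear because the critical coefficients in the local expansion at \(P\) are unique. The local analysis of Theorem~\ref{thm:asymptotic-expansion} is carried out on \(D_P\) and does not use that \(S\) is a singleton. Hence \(\pi_S=(\pi_P)_{P\in S}\) is linear.

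\textbf{The kernel.} First, \(D_{\min}(\Delta_X)\subset\ker\pi_S\). Let \(v\in D_{\min}(\Delta_X)\). By Lemma~\ref{lem:dmin-kernel} and Lemma~\ref{lem:green-form-global}, for every \(u\in D_{\max}(\Delta_X)\) we have
\[
0=q_X(u,v)=\omega_S\bigl(\pi_S(u),\pi_S(v)\bigr).
\]
We need \(\pi_S\) to hit every element of \(V_S\), so this step depends on surjectivity, proved below. Granting it, nondegeneracy of \(\omega_S\) gives \(\pi_S(v)=0\). (Corollary~\ref{cor:critical-asymptotic-projection} gives the same conclusion directly, since each coefficient is a value \(q_X(v,\cdot)\), and these vanish by Lemma~\ref{lem:dmin-kernel}.)

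Conversely, suppose \(\pi_S(u)=0\). Lemma~\ref{lem:global-remainder}, with disjointly supported cutoffs \(\chi_P\), gives
\[
u=\sum_{P\in S}\chi_P\pi_P(u)+R=R\in D_{\min}(\Delta_X).
\]

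\textbf{Surjectivity.} Given \(h=(h_P)_{P\in S}\in V_S\), set \(w:=\sum_{P\in S}\chi_P h_P\), with the \(\chi_P\) having pairwise disjoint supports. Each \(\chi_P h_P\) is smooth on \(X'\) and square-integrable near \(P\), since every critical term lies in the weighted \(L^2\) space. Its Laplacian is \(\chi_P\Delta h_P\) plus commutator terms supported where \(d\chi_P\neq0\); here \(\Delta h_P=0\) near \(P\) because each critical term is harmonic on \(D_P'\). Hence \(w\in D_{\max}(\Delta_X)\).

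It remains to show \(\pi_S(w)=h\). Near \(P\), \(w\) coincides with \(h_P\), so the expansion of \(w\) at \(P\) has critical part \(h_P\) and remainder identically zero near \(P\). By the uniqueness in Theorem~\ref{thm:asymptotic-expansion}, \(\pi_P(w)=h_P\). Alternatively, Corollary~\ref{cor:critical-asymptotic-projection} together with the normalized local Green pairing computes the coefficients of \(\pi_P(w)\) directly.

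\textbf{Dimension count.} The first isomorphism theorem gives \(\overline{\pi}_S\), and
\[
\dim_{\mathbb C}V_S=\sum_{P\in S}\dim_{\mathbb C}V_P=\sum_{P\in S}(4n_P-2).
\]

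\textbf{Main obstacle.} The only delicate point is justifying that the single-point local statements apply with several conic points: uniqueness of the coefficients and membership of \(\chi_P R\) in \(D_{\min}(\Delta_X)\). Both are local near \(P\). The characterization of \(D_{\min}\) in \cite{gjb} is local at each singular point, and Lemma~\ref{lem:away-from-conic} handles everything supported away from \(S\). So I would note explicitly that the proof of Theorem~\ref{thm:asymptotic-expansion} localizes verbatim to each \(P\in S\).
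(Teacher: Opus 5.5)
Your proposal is correct and follows the paper's proof essentially step for step. Surjectivity comes from $w=\sum_{P\in S}\chi_P h_P$, the inclusion $\ker\pi_S\subset D_{\min}(\Delta_X)$ from Lemma~\ref{lem:global-remainder}, and the reverse inclusion from Lemma~\ref{lem:dmin-kernel}, Lemma~\ref{lem:green-form-global}, surjectivity and nondegeneracy of $\omega_S$. The only differences are cosmetic: you get linearity from uniqueness of the critical coefficients rather than from Corollary~\ref{cor:critical-asymptotic-projection}, and you spell out why $\chi_P h_P\in D_{\max}(\Delta_X)$ and why Theorem~\ref{thm:asymptotic-expansion} localizes to several conic points, which the paper leaves implicit.
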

\begin{proof}
The linearity of \(\pi_S\) follows immediately from
Corollary~\ref{cor:critical-asymptotic-projection}.

We first prove surjectivity. Choose coordinate disks \(D_P\),
\(P\in S\), with pairwise disjoint closures, and cutoff functions
\(\chi_P\in C_c^\infty(D_P)\) equal to \(1\) near \(P\). Let
\(h=(h_P)_{P\in S}\in V_S\), and define
\(u:=\sum_{P\in S}\chi_P h_P\), where each \(\chi_P h_P\) is extended
by zero outside \(D_P\). Then \(u\in D_{\max}(\Delta_X)\) and
\(\pi_P(u)=h_P\) for every \(P\in S\). Hence \(\pi_S(u)=h\), proving
that \(\pi_S\) is surjective.

We next determine the kernel. Suppose first that
\(u\in\ker\pi_S\). Then \(\pi_P(u)=0\) for every \(P\in S\).
Lemma~\ref{lem:global-remainder} gives
\[
u
=
u-\sum_{P\in S}\chi_P\pi_P(u)
\in D_{\min}(\Delta_X).
\]
Thus \(\ker\pi_S\subset D_{\min}(\Delta_X)\).
Conversely, let \(u\in D_{\min}(\Delta_X)\). For every
\(v\in D_{\max}(\Delta_X)\), Lemma~\ref{lem:dmin-kernel} and
Lemma~\ref{lem:green-form-global} give
\[
0
=
q_X(u,v)
=
\omega_S\bigl(\pi_S(u),\pi_S(v)\bigr).
\]
Since \(\pi_S\) is surjective, \(\pi_S(v)\) ranges over all of \(V_S\).
The nondegeneracy of \(\omega_S\) therefore implies
\(\pi_S(u)=0\). Hence
\(u\in\ker\pi_S\), proving that
\(\ker\pi_S=D_{\min}(\Delta_X)\).

The asserted isomorphism follows from the first isomorphism theorem.
Finally,
\[
\dim_{\mathbb C}V_S
=
\sum_{P\in S}\dim_{\mathbb C}V_P
=
\sum_{P\in S}(4n_P-2).
\]
\end{proof}

\section{Self-Adjoint Extensions and the Weyl Function}
We use \((V_S,\omega_S)\) to parametrize the self-adjoint
realizations by Lagrangian subspaces and to construct their Poisson
operators and Weyl functions.

\begin{defn}\label{def:Lagrangian-Grassmannian}
For a subspace \(\mathcal V\subset V_S\), define its
\(\omega_S\)-orthogonal complement by
\[
\mathcal V^{\perp_{\omega_S}}
:=
\left\{
h\in V_S:
\omega_S(h,\ell)=0
\text{ for every }\ell\in\mathcal V
\right\}.
\]
The subspace \(\mathcal V\) is called isotropic if
\(\mathcal V\subset\mathcal V^{\perp_{\omega_S}}\), and Lagrangian if
\(\mathcal V=\mathcal V^{\perp_{\omega_S}}\). We denote by
\[
\operatorname{LGr}(V_S,\omega_S)
:=
\left\{
\mathcal V\subset V_S:
\mathcal V=\mathcal V^{\perp_{\omega_S}}
\right\}
\]
the Lagrangian Grassmannian associated with the nondegenerate
skew-Hermitian space \((V_S,\omega_S)\).
\end{defn}

We denote the minimal Laplacian by
\(
    \Delta_{X,\min}
    :=
    \Delta_X\big|_{D_{\min}(\Delta_X)}.
\)
For any subspace \(\mathcal V\subset V_S\), define
\[
    D_{\mathcal V}
    :=
    \left\{
        u\in D_{\max}(\Delta_X):
        \pi_S(u)\in\mathcal V
    \right\},
    \qquad
    \Delta_{X,\mathcal V}
    :=
    \Delta_X\big|_{D_{\mathcal V}}.
\]
Since \(\ker\pi_S=D_{\min}(\Delta_X)\), one has
\(D_{\{0\}}=D_{\min}(\Delta_X)\) and
\(\Delta_{X,\{0\}}=\Delta_{X,\min}\).

\begin{thm}\label{thm:self-adjoint-extensions}
For every subspace \(\mathcal V\subset V_S\), one has
\[
    \Delta_{X,\mathcal V}^{*}
    =
    \Delta_{X,\mathcal V^{\perp_{\omega_S}}}.
\]
In particular, \(\Delta_{X,\mathcal V}\) is self-adjoint if and only
if \(\mathcal V\) is Lagrangian. Moreover, the assignment
\(\mathcal V\mapsto\Delta_{X,\mathcal V}\) gives a bijection from
\(\operatorname{LGr}(V_S,\omega_S)\) onto the set of all
self-adjoint extensions of \(\Delta_{X,\min}\).
\end{thm}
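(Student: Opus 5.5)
The plan is first to identify the adjoint of the minimal operator, and then to read off the adjoint of every intermediate restriction from the Green formula of Lemma~\ref{lem:green-form-global}.

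\textbf{Step 1: the adjoint of the minimal operator.} I will show that \(\Delta_{X,\min}^*=\Delta_X|_{D_{\max}(\Delta_X)}\). The argument is the usual one.
\begin{itemize}
\item Let \(v\in D(\Delta_{X,\min}^*)\). Pairing against \(\phi\in C_c^\infty(X')\subset D_{\min}(\Delta_X)\) gives \(\langle v,\Delta_{X,c}\phi\rangle=\langle \Delta_{X,\min}^*v,\phi\rangle\). By the definition of the distributional Laplacian, this means \(v\in D_{\max}(\Delta_X)\) and \(\Delta_Xv=\Delta_{X,\min}^*v\).
\item Conversely, let \(v\in D_{\max}(\Delta_X)\) and \(u\in D_{\min}(\Delta_X)\). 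Lemma~\ref{lem:dmin-kernel} gives \(q_X(u,v)=0\), that is, \(\langle\Delta_Xu,v\rangle=\langle u,\Delta_Xv\rangle\). Hence \(v\in D(\Delta_{X,\min}^*)\).
\end{itemize}

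\textbf{Step 2: the adjoint of \(\Delta_{X,\mathcal V}\).} Every \(\mathcal V\subset V_S\) contains \(0\), so
\[
D_{\min}(\Delta_X)\subset D_{\mathcal V}\subset D_{\max}(\Delta_X).
\]
Taking adjoints reverses inclusions, so \(\Delta_{X,\mathcal V}^*\subset\Delta_{X,\min}^*=\Delta_X|_{D_{\max}}\). Therefore \(\Delta_{X,\mathcal V}^*\) is the restriction of \(\Delta_X\) to
\[
\{v\in D_{\max}(\Delta_X):\ q_X(u,v)=0\ \text{for all } u\in D_{\mathcal V}\}.
\]
By Lemma~\ref{lem:green-form-global}, \(q_X(u,v)=\omega_S(\pi_S(u),\pi_S(v))\). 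By the surjectivity in Theorem~\ref{thm:quotient-general}, \(\pi_S(D_{\mathcal V})=\mathcal V\) exactly. So the condition reads \(\omega_S(h,\pi_S(v))=0\) for all \(h\in\mathcal V\).

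The definition of \(\mathcal V^{\perp_{\omega_S}}\) uses \(\omega_S(h,\ell)=0\) with \(h\) in the first slot and \(\ell\in\mathcal V\) in the second. The skew-Hermitian property \(\omega_S(\ell,h)=-\overline{\omega_S(h,\ell)}\) makes the two orderings equivalent. Hence the condition is \(\pi_S(v)\in\mathcal V^{\perp_{\omega_S}}\), and
\[
\Delta_{X,\mathcal V}^*=\Delta_{X,\mathcal V^{\perp_{\omega_S}}}.
\]

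\textbf{Step 3: self-adjointness.} Suppose \(\Delta_{X,\mathcal V}\) is self-adjoint. Then \(D_{\mathcal V}=D_{\mathcal V^{\perp}}\). Applying \(\pi_S\), which is surjective on each such domain, gives \(\mathcal V=\mathcal V^{\perp_{\omega_S}}\). The converse is immediate from Step 2.

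\textbf{Step 4: the bijection onto self-adjoint extensions.}
\begin{itemize}
\item \emph{Injectivity.} If \(\Delta_{X,\mathcal V}=\Delta_{X,\mathcal W}\), the domains agree. Applying \(\pi_S\) gives \(\mathcal V=\mathcal W\).
\item \emph{Surjectivity.} Let \(T\) be a self-adjoint extension of \(\Delta_{X,\min}\). Then \(\Delta_{X,\min}\subset T=T^*\subset\Delta_{X,\min}^*=\Delta_X|_{D_{\max}}\). So \(T=\Delta_X|_{D(T)}\) with \(D_{\min}\subset D(T)\subset D_{\max}\). Set \(\mathcal V:=\pi_S(D(T))\). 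Since \(\ker\pi_S=D_{\min}\subset D(T)\), we have \(D(T)=\pi_S^{-1}(\mathcal V)=D_{\mathcal V}\). Thus \(T=\Delta_{X,\mathcal V}\), and \(\mathcal V\) is Lagrangian by Step 3.
\end{itemize}

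\textbf{Where the care is needed.} The only delicate point is Step 1, specifically the fact that the distributional definition of \(\Delta_X\) coincides with the adjoint of \(\Delta_{X,c}\). This holds because \(\Delta_{X,c}\) is formally symmetric with respect to \(dA_X\), so it is built into the definition. Everything else is finite-dimensional linear algebra, enabled by Theorem~\ref{thm:quotient-general}. I also need to handle the sesquilinear ordering of \(\omega_S\) consistently when identifying \(\mathcal V^{\perp_{\omega_S}}\), which the skew-Hermitian property settles as noted in Step 2.
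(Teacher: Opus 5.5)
Your proof is correct and follows essentially the same route as the paper. Both arguments place the adjoint's domain inside \(D_{\max}(\Delta_X)\) by testing against \(C_c^\infty(X')\). Both then use the global Green formula and \(\pi_S(D_{\mathcal V})=\mathcal V\) to rewrite the adjoint condition as \(\pi_S(v)\in\mathcal V^{\perp_{\omega_S}}\). Finally, both recover an arbitrary self-adjoint extension \(T\) as \(\Delta_{X,\mathcal V}\) with \(\mathcal V=\pi_S(D(T))\), using \(\ker\pi_S=D_{\min}(\Delta_X)\subset D(T)\). Your separate computation of \(\Delta_{X,\min}^*\) and the inclusion-reversal step only make explicit what the paper does in a single line.
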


\begin{proof}
Since \(\ker\pi_S=D_{\min}(\Delta_X)\), we have
\(D_{\min}(\Delta_X)\subset D_{\mathcal V}\). In particular,
\(C_c^\infty(X')\subset D_{\mathcal V}\), so
\(\Delta_{X,\mathcal V}\) is densely defined.

Let \(u\in D(\Delta_{X,\mathcal V}^{*})\). Testing the defining
identity for the adjoint against functions in \(C_c^\infty(X')\)
shows that \(u\in D_{\max}(\Delta_X)\) and
\(\Delta_{X,\mathcal V}^{*}u=\Delta_Xu\). Therefore,
\[
\begin{aligned}
u\in D(\Delta_{X,\mathcal V}^{*})
&\Longleftrightarrow
q_X(u,v)=0
\quad\text{for every }v\in D_{\mathcal V} \\
&\Longleftrightarrow
\omega_S\bigl(\pi_S(u),\ell\bigr)=0
\quad\text{for every }\ell\in\mathcal V \\
&\Longleftrightarrow
\pi_S(u)\in\mathcal V^{\perp_{\omega_S}}.
\end{aligned}
\]
Here we have used
\(\pi_S(D_{\mathcal V})=\mathcal V\), which follows from the
surjectivity of \(\pi_S\). Thus
\(D(\Delta_{X,\mathcal V}^{*})
=D_{\mathcal V^{\perp_{\omega_S}}}\). Since both operators act by
\(\Delta_X\), it follows that
\(\Delta_{X,\mathcal V}^{*}
=\Delta_{X,\mathcal V^{\perp_{\omega_S}}}\).

Consequently, \(\Delta_{X,\mathcal V}\) is self-adjoint if and only if
\(D_{\mathcal V}=D_{\mathcal V^{\perp_{\omega_S}}}\). Since
\(\pi_S\) is surjective, this is equivalent to
\(\mathcal V=\mathcal V^{\perp_{\omega_S}}\), namely, to
\(\mathcal V\) being Lagrangian.

Finally, let \(\widetilde{\Delta}\) be any self-adjoint extension of
the minimal conic Laplacian, and set
\(\mathcal D:=D(\widetilde{\Delta})\). Since
\(\widetilde{\Delta}\) is a self-adjoint extension of the minimal
Laplacian, one has
\(\mathcal D\subset D_{\max}(\Delta_X)\) and
\(\widetilde{\Delta}u=\Delta_Xu\) for \(u\in\mathcal D\).
Set \(\mathcal V:=\pi_S(\mathcal D)\).

We claim that \(\mathcal D=D_{\mathcal V}\). The inclusion
\(\mathcal D\subset D_{\mathcal V}\) is immediate. Conversely, if
\(u\in D_{\mathcal V}\), choose \(v\in\mathcal D\) such that
\(\pi_S(v)=\pi_S(u)\). Then
\[
    u-v\in\ker\pi_S
    =
    D_{\min}(\Delta_X)
    \subset\mathcal D,
\]
and hence \(u\in\mathcal D\). Therefore,
\(\mathcal D=D_{\mathcal V}\) and
\(\widetilde{\Delta}=\Delta_{X,\mathcal V}\).
Since \(\widetilde{\Delta}\) is self-adjoint, the preceding result
implies that \(\mathcal V\) is Lagrangian. The Lagrangian subspace is
unique because \(\mathcal V=\pi_S(\mathcal D)\), and hence the
correspondence is bijective.
\end{proof}

The elements of \(V_S\) will be called the critical asymptotic
boundary data. For a Lagrangian subspace
\(\mathcal V\subset V_S\), the condition
\(\pi_S(u)\in\mathcal V\) will be called the Lagrangian boundary
condition determined by \(\mathcal V\). The corresponding
self-adjoint realization of \(\Delta_X\), with domain
\(D_{\mathcal V}\), will be denoted by
\(\Delta_{X,\mathcal V}\).

Throughout the remainder of this paper, for
\(\mathcal V\in\operatorname{LGr}(V_S,\omega_S)\) and
\(\lambda\in\rho(\Delta_{X,\mathcal V})\), we denote the resolvent of
\(\Delta_{X,\mathcal V}\) by
\(
    R_{\mathcal V}(\lambda)
    :=
    (\lambda-\Delta_{X,\mathcal V})^{-1}.
\)

\begin{thm}\label{thm:compact-resolvent}
Let
\(\mathcal V\in\operatorname{LGr}(V_S,\omega_S)\), and let
\(\Delta_{X,\mathcal V}\) be the corresponding self-adjoint
realization. Then \(\Delta_{X,\mathcal V}\) is bounded from below and
has compact resolvent. Consequently, its spectrum is discrete and
consists of real eigenvalues of finite multiplicity that can
accumulate only at \(+\infty\).
\end{thm}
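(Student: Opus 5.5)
The plan is to deduce both properties from the compactness of the inclusion \(D_{\max}(\Delta_X)\hookrightarrow L^2(X')\), where \(D_{\max}(\Delta_X)\) carries the graph norm, and then apply finite-rank perturbation theory relative to the Friedrichs extension.

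\textbf{Step 1: compactness of the embedding.} By Theorem~\ref{thm:quotient-general} we have the decomposition
\[
D_{\max}(\Delta_X)=D_{\min}(\Delta_X)\oplus\operatorname{span}\{\chi_Ph:P\in S,\ h\in V_P\},
\]
and the second summand is finite-dimensional. So it suffices to show that \(D_{\min}(\Delta_X)\hookrightarrow L^2(X')\) is compact. The inclusion of the finite-dimensional summand is automatically compact, and the projection onto it is continuous because \(\pi_S\) is given by Green pairings (Corollary~\ref{cor:critical-asymptotic-projection}).

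For \(D_{\min}\), take a bounded sequence \(u_k\) in the graph norm and split it with cutoffs \(\chi_P\) and \(\eta=1-\sum_P\chi_P\). On \(\operatorname{supp}\eta\), interior elliptic estimates give uniform \(H^2\) bounds, and Rellich's theorem gives a convergent subsequence. Near \(P\), I would use the cone characterization already quoted in the proof of Theorem~\ref{thm:asymptotic-expansion}: \(D_{\min}\) lies in \(r^{n_P-\varepsilon}H_b^2\), with continuous inclusion by the closed graph theorem. The \(L^2\) space is \(L^2(\rho r^{2n_P}\,dr/r\,d\theta)=r^{-n_P}L^2_b\). Hence an element of \(r^{n_P-\varepsilon}H_b^2\) lies in \(r^{2n_P-\varepsilon}\) times the \(L^2\) space, so the extra power \(r^{2n_P-\varepsilon}\to 0\) as \(r\to0\). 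Combined with the compactness of \(H_b^2\hookrightarrow L^2_b\) on regions \(\{r\geq\delta\}\), this gives uniform smallness near \(r=0\) and compactness via a diagonal argument. This step, getting uniform decay at the cone tip out of the weighted \(b\)-Sobolev statement, is the main obstacle.

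\textbf{Step 2: compact resolvent.} Since \(\Delta_{X,\mathcal V}\) is self-adjoint (Theorem~\ref{thm:self-adjoint-extensions}), for nonreal \(\lambda\) the resolvent \(R_{\mathcal V}(\lambda)\) maps \(L^2\) boundedly into \(D_{\mathcal V}\) with the graph norm. Composing with the compact embedding from Step 1 shows that \(R_{\mathcal V}(\lambda)\) is compact.

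\textbf{Step 3: lower bound.} The minimal operator is nonnegative: for \(u\in C_c^\infty(X')\), integration by parts gives \(\langle\Delta u,u\rangle=\|du\|^2\geq 0\), and this extends to \(D_{\min}\) by closure. The deficiency indices of \(\Delta_{X,\min}\) equal \(\tfrac12\dim V_S<\infty\). Every self-adjoint extension of a semibounded operator with finite deficiency indices is semibounded, since its resolvent differs from that of the Friedrichs extension by a finite-rank operator (Krein). Concretely, on \(\mathcal N:=D_{\mathcal V}\cap \Delta_{X,\min}^{-1}\)-type arguments, the quadratic form \(\langle\Delta_{X,\mathcal V}u,u\rangle\) is nonnegative on the subspace \(D_{\min}\), which has finite codimension \(N=\tfrac12\dim V_S\) in \(D_{\mathcal V}\). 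Therefore at most \(N\) eigenvalues, counted with multiplicity, are negative. This uses the discreteness from Step 2 together with min-max: any \((N+1)\)-dimensional space of eigenfunctions with negative eigenvalues would meet \(D_{\min}\) nontrivially, giving a contradiction. So the spectrum is bounded below.

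\textbf{Conclusion.} A self-adjoint operator with compact resolvent has discrete real spectrum consisting of eigenvalues of finite multiplicity. Being bounded below, these eigenvalues can accumulate only at \(+\infty\).
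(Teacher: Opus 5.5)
Your proposal is correct and follows the same strategy as the paper: a compact embedding of the graph-normed domain into \(L^2(X')\) gives a compact resolvent, and nonnegativity of \(\Delta_{X,\min}\) together with finite deficiency indices gives the lower bound. The differences are in execution. The paper takes compactness directly from the embedding of \(D_{\max}(\Delta_X)\) into a positively weighted cone Sobolev space (Lemma~3.9 and (3.10) of \cite{gjb}), whereas you reduce to \(D_{\min}(\Delta_X)\) through the finite-dimensional complement and its \(r^{n_P-\varepsilon}H_b^2\) characterization. Your codimension argument also makes explicit the paper's appeal to standard extension theory, and yields at most \(\tfrac12\dim_{\mathbb C}V_S\) negative eigenvalues.
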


\begin{proof}
The compactness of the resolvent follows from the singular elliptic
estimate and the Rellich property for closed extensions of elliptic
cone operators. More precisely, the maximal domain embeds continuously
into a positively weighted cone Sobolev space, which is compactly
embedded in the underlying weighted \(L^2\)-space; see
\cite[Lemma~3.9 and (3.10)]{gjb}. Moreover,
\(\Delta_{X,\min}\) is nonnegative and has finite deficiency indices.
By the standard extension theory of semibounded symmetric operators
with finite deficiency indices, every self-adjoint extension of
\(\Delta_{X,\min}\) is bounded from below.
\end{proof}

Let \(\{(\lambda_i^{\mathcal V},\phi_i^{\mathcal V})\}_{i\geq0}\)
be the eigenpairs of \(\Delta_{X,\mathcal V}\), with the eigenvalues
listed in nondecreasing order and repeated according to multiplicity:
\(\lambda_0^{\mathcal V}\leq\lambda_1^{\mathcal V}\leq\cdots\) and
\(\lambda_i^{\mathcal V}\to+\infty\). We choose the eigenfunctions so
that \(\{\phi_i^{\mathcal V}\}_{i\geq0}\) is a complete orthonormal
basis of \(L^2(X')\). Thus
\(\Delta_{X,\mathcal V}\phi_i^{\mathcal V}
=\lambda_i^{\mathcal V}\phi_i^{\mathcal V}\).
For each \(i\geq0\), define the linear functional
\(q_i^{\mathcal V}:D_{\max}(\Delta_X)\to\mb C\) by
\(
    q_i^{\mathcal V}(u)
    :=
    q_X(u,\phi_i^{\mathcal V}).
\)
\begin{lem}\label{lem:Y-series}
Let
\(\mathcal V\in\operatorname{LGr}(V_S,\omega_S)\) and
\(\lambda\in\rho(\Delta_{X,\mathcal V})\). For every
\(u\in D_{\max}(\Delta_X)\), the series
\(
    \sum_{i\geq0}
    (\lambda-\lambda_i^{\mathcal V})^{-1}
    q_i^{\mathcal V}(u)\phi_i^{\mathcal V}
\)
converges in \(L^2(X')\). Define
\[
    Y_{\mathcal V}(\lambda):
    D_{\max}(\Delta_X)
    \longrightarrow
    L^2(X')
\]
by
\(
    Y_{\mathcal V}(\lambda)u
    :=
    \sum_{i\geq0}
    (\lambda-\lambda_i^{\mathcal V})^{-1}
    q_i^{\mathcal V}(u)\phi_i^{\mathcal V}.
\)
Then
\(
    Y_{\mathcal V}(\lambda)u
    =
    u+R_{\mathcal V}(\lambda)(\Delta_X-\lambda)u.
\)
\end{lem}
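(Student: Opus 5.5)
The plan is to define \(w:=u+R_{\mathcal V}(\lambda)(\Delta_X-\lambda)u\in L^2(X')\) and compute its Fourier coefficients with respect to the orthonormal basis \(\{\phi_i^{\mathcal V}\}_{i\geq0}\). Once these coefficients are shown to equal \((\lambda-\lambda_i^{\mathcal V})^{-1}q_i^{\mathcal V}(u)\), both claims follow at once. The series converges in \(L^2(X')\), being the orthonormal expansion of \(w\), and its sum is \(w\). So no separate convergence estimate is needed; convergence is a consequence of the identification.

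First, since \(u\in D_{\max}(\Delta_X)\), the function \(g:=(\Delta_X-\lambda)u\) lies in \(L^2(X')\). Because \(\lambda\in\rho(\Delta_{X,\mathcal V})\), the self-adjointness from Theorem~\ref{thm:self-adjoint-extensions} and the discreteness from Theorem~\ref{thm:compact-resolvent} give
\[
\langle R_{\mathcal V}(\lambda)g,\phi_i^{\mathcal V}\rangle_{L^2(X')}
=
(\lambda-\lambda_i^{\mathcal V})^{-1}\langle g,\phi_i^{\mathcal V}\rangle_{L^2(X')}.
\]
To justify this, move \(R_{\mathcal V}(\lambda)\) across the inner product as \(R_{\mathcal V}(\bar\lambda)\) and apply it to the eigenfunction, using that \(\lambda_i^{\mathcal V}\) is real.

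Next I expand \(\langle g,\phi_i^{\mathcal V}\rangle\) using the Green form. Since \(\phi_i^{\mathcal V}\in D_{\mathcal V}\subset D_{\max}(\Delta_X)\), the definition of \(q_X\) and \(\Delta_X\phi_i^{\mathcal V}=\lambda_i^{\mathcal V}\phi_i^{\mathcal V}\) with \(\lambda_i^{\mathcal V}\) real give
\[
\langle\Delta_Xu,\phi_i^{\mathcal V}\rangle
=
q_X(u,\phi_i^{\mathcal V})+\lambda_i^{\mathcal V}\langle u,\phi_i^{\mathcal V}\rangle .
\]
Hence \(\langle g,\phi_i^{\mathcal V}\rangle=q_i^{\mathcal V}(u)-(\lambda-\lambda_i^{\mathcal V})\langle u,\phi_i^{\mathcal V}\rangle\). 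Combining this with the previous step,
\[
\langle w,\phi_i^{\mathcal V}\rangle
=
\langle u,\phi_i^{\mathcal V}\rangle
+(\lambda-\lambda_i^{\mathcal V})^{-1}q_i^{\mathcal V}(u)
-\langle u,\phi_i^{\mathcal V}\rangle
=
(\lambda-\lambda_i^{\mathcal V})^{-1}q_i^{\mathcal V}(u).
\]

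Finally, completeness of \(\{\phi_i^{\mathcal V}\}\) gives \(w=\sum_i\langle w,\phi_i^{\mathcal V}\rangle\phi_i^{\mathcal V}\) in \(L^2(X')\), which is exactly the asserted series and the identity \(Y_{\mathcal V}(\lambda)u=w\). The only delicate points are bookkeeping ones. I must check that the conjugate-linear slot of \(q_X\) and of the inner product is handled consistently, so that \(\overline{\lambda_i^{\mathcal V}}=\lambda_i^{\mathcal V}\) is what is actually used. I must also check that the resolvent sign convention \(R_{\mathcal V}(\lambda)=(\lambda-\Delta_{X,\mathcal V})^{-1}\) produces the factor \((\lambda-\lambda_i^{\mathcal V})^{-1}\) rather than its negative. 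I expect no substantive analytic obstacle.
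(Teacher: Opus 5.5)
Your proof is correct and is essentially the paper's argument: both use the Green form to write \(q_i^{\mathcal V}(u)=\langle(\Delta_X-\lambda)u,\phi_i^{\mathcal V}\rangle+(\lambda-\lambda_i^{\mathcal V})\langle u,\phi_i^{\mathcal V}\rangle\), and both then recognize \((\lambda-\lambda_i^{\mathcal V})^{-1}q_i^{\mathcal V}(u)\) as the \(i\)-th coefficient of \(u+R_{\mathcal V}(\lambda)(\Delta_X-\lambda)u\). The only difference is cosmetic: you compute the coefficients of \(w\) directly and invoke completeness once, whereas the paper splits the series into the expansions of \(u\) and of \(R_{\mathcal V}(\lambda)f\) and notes that each converges.
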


\begin{proof}
For brevity, write
\(\langle\cdot,\cdot\rangle\) for the inner product on \(L^2(X')\),
and set \(f:=(\Delta_X-\lambda)u\). Since
\(u\in D_{\max}(\Delta_X)\), one has \(f\in L^2(X')\). For each
\(i\geq0\),
\[
    q_i^{\mathcal V}(u)
    =
    \langle f,\phi_i^{\mathcal V}\rangle
    +
    (\lambda-\lambda_i^{\mathcal V})
    \langle u,\phi_i^{\mathcal V}\rangle.
\]
Consequently,
\[
    \frac{q_i^{\mathcal V}(u)}
    {\lambda-\lambda_i^{\mathcal V}}
    =
    \langle u,\phi_i^{\mathcal V}\rangle
    +
    \frac{\langle f,\phi_i^{\mathcal V}\rangle}
    {\lambda-\lambda_i^{\mathcal V}}.
\]
The first term on the right is the \(i\)-th Fourier coefficient of
\(u\) with respect to the orthonormal basis
\(\{\phi_i^{\mathcal V}\}_{i\geq0}\), while the second is the
corresponding coefficient in the spectral expansion of
\(R_{\mathcal V}(\lambda)f\). Both series converge in \(L^2(X')\).
Hence
\(
    Y_{\mathcal V}(\lambda)u
    =
    u+R_{\mathcal V}(\lambda)f
    =
    u+R_{\mathcal V}(\lambda)(\Delta_X-\lambda)u.
\)
\end{proof}

The identity in Lemma~\ref{lem:Y-series} shows that
\(Y_{\mathcal V}(\lambda)u\in D_{\max}(\Delta_X)\) for every
\(u\in D_{\max}(\Delta_X)\). Thus we regard
\(Y_{\mathcal V}(\lambda)\) as a linear operator on
\(D_{\max}(\Delta_X)\).

\begin{cor}\label{cor:Y-projection}
Let
\(\mathcal V\in\operatorname{LGr}(V_S,\omega_S)\) and
\(\lambda\in\rho(\Delta_{X,\mathcal V})\). Then
\(Y_{\mathcal V}(\lambda)\) is the projection of
\(D_{\max}(\Delta_X)\) onto \(\ker(\Delta_X-\lambda)\) along
\(D_{\mathcal V}\). Consequently,
\[
D_{\max}(\Delta_X)
=
D_{\mathcal V}\oplus\ker(\Delta_X-\lambda).
\]
Moreover,
\(
\operatorname{rank}Y_{\mathcal V}(\lambda)
=
\frac12\dim_{\mathbb C}V_S.
\)
\end{cor}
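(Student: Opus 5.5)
The plan is to work directly from the identity in Lemma~\ref{lem:Y-series},
\(Y_{\mathcal V}(\lambda)u=u+R_{\mathcal V}(\lambda)(\Delta_X-\lambda)u\),
and check the defining properties of a projection.

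\emph{Range.} For \(u\in D_{\max}(\Delta_X)\), set \(f:=(\Delta_X-\lambda)u\). Since \(R_{\mathcal V}(\lambda)f\in D_{\mathcal V}\) and
\((\Delta_X-\lambda)R_{\mathcal V}(\lambda)f=-f\) (recall \(R_{\mathcal V}(\lambda)=(\lambda-\Delta_{X,\mathcal V})^{-1}\)), we get
\((\Delta_X-\lambda)Y_{\mathcal V}(\lambda)u=f-f=0\). Hence the range of \(Y_{\mathcal V}(\lambda)\) lies in \(\ker(\Delta_X-\lambda)\).

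\emph{Identity on the kernel and vanishing on \(D_{\mathcal V}\).} If \(u\in\ker(\Delta_X-\lambda)\), then \(f=0\), so \(Y_{\mathcal V}(\lambda)u=u\). If \(u\in D_{\mathcal V}\), then \((\Delta_X-\lambda)u=(\Delta_{X,\mathcal V}-\lambda)u\), so
\(R_{\mathcal V}(\lambda)(\Delta_X-\lambda)u=-u\) and \(Y_{\mathcal V}(\lambda)u=0\).

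Together these give \(Y_{\mathcal V}(\lambda)^2=Y_{\mathcal V}(\lambda)\), with range exactly \(\ker(\Delta_X-\lambda)\). Its kernel contains \(D_{\mathcal V}\). Conversely, if \(Y_{\mathcal V}(\lambda)u=0\), then \(u=-R_{\mathcal V}(\lambda)(\Delta_X-\lambda)u\in D_{\mathcal V}\), so the kernel equals \(D_{\mathcal V}\). The direct sum decomposition
\(D_{\max}(\Delta_X)=D_{\mathcal V}\oplus\ker(\Delta_X-\lambda)\) then follows from the standard fact that a linear idempotent splits its domain as kernel plus range. Directness can also be seen independently: \(D_{\mathcal V}\cap\ker(\Delta_X-\lambda)=\{0\}\) because \(\lambda\in\rho(\Delta_{X,\mathcal V})\).

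\emph{Rank.} The rank computation is the only step with real content. I would use Theorem~\ref{thm:quotient-general}: since \(D_{\min}(\Delta_X)\subset D_{\mathcal V}\), the decomposition passes to the quotient. Applying \(\pi_S\) gives
\[
V_S=\pi_S(D_{\mathcal V})+\pi_S(\ker(\Delta_X-\lambda))=\mathcal V+\pi_S(\ker(\Delta_X-\lambda)).
\]
This sum is direct. If \(k\in\ker(\Delta_X-\lambda)\) has \(\pi_S(k)\in\mathcal V\), then \(k\in D_{\mathcal V}\) and hence \(k=0\). The same argument shows \(\pi_S\) is injective on \(\ker(\Delta_X-\lambda)\). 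Therefore
\(\dim\ker(\Delta_X-\lambda)=\dim V_S-\dim\mathcal V\).

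It remains to show \(\dim\mathcal V=\tfrac12\dim V_S\) for Lagrangian \(\mathcal V\). By nondegeneracy of \(\omega_S\), the map \(h\mapsto\omega_S(\cdot,h)\) is a conjugate-linear isomorphism of \(V_S\) onto its dual. Hence \(\dim\mathcal V^{\perp_{\omega_S}}=\dim V_S-\dim\mathcal V\), and \(\mathcal V=\mathcal V^{\perp_{\omega_S}}\) forces \(\dim\mathcal V=\tfrac12\dim V_S\).

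Thus \(\operatorname{rank}Y_{\mathcal V}(\lambda)=\dim\ker(\Delta_X-\lambda)=\tfrac12\dim_{\mathbb C}V_S\). I expect no serious obstacle; the only care needed is the sign convention \(R_{\mathcal V}(\lambda)=(\lambda-\Delta_{X,\mathcal V})^{-1}\) and the finite-dimensional complement count.
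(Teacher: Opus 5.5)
Your proof is correct and follows essentially the same route as the paper. The identity of Lemma~\ref{lem:Y-series} shows that \(Y_{\mathcal V}(\lambda)\) maps into \(\ker(\Delta_X-\lambda)\), fixes it, and has kernel \(D_{\mathcal V}\), and the rank then comes from \(D_{\max}(\Delta_X)/D_{\mathcal V}\cong V_S/\mathcal V\) together with \(\dim\mathcal V=\frac12\dim_{\mathbb C}V_S\). Your version phrases this dimension count by pushing the splitting through \(\pi_S\), and it also proves \(\dim\mathcal V=\frac12\dim_{\mathbb C}V_S\) from nondegeneracy of \(\omega_S\), which the paper takes for granted.
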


Indeed, the identity in Lemma~\ref{lem:Y-series} shows that
\(Y_{\mathcal V}(\lambda)\) is the identity on
\(\ker(\Delta_X-\lambda)\) and that
\(\ker Y_{\mathcal V}(\lambda)=D_{\mathcal V}\). The rank formula
follows from
\(
D_{\max}(\Delta_X)/D_{\mathcal V}\cong V_S/\mathcal V
\)
and
\(
\dim_{\mathbb C}\mathcal V
=
\frac12\dim_{\mathbb C}V_S.
\)

\begin{defn}
A Lagrangian pair in \((V_S,\omega_S)\) is an ordered pair
\((\mathcal V,\mathcal V^\#)\) of Lagrangian subspaces satisfying
\(V_S=\mathcal V\oplus\mathcal V^\#\).
\end{defn}

Let \((\mathcal V,\mathcal V^\#)\) be a Lagrangian pair in
\((V_S,\omega_S)\). Denote by
\(P_{\mathcal V}:V_S\to\mathcal V\) and
\(P_{\mathcal V^\#}:V_S\to\mathcal V^\#\) the corresponding
projections. Define
\(\widetilde{\pi}_{\mathcal V}
:=P_{\mathcal V}\circ\pi_S:
D_{\max}(\Delta_X)\to\mathcal V\)
and
\(\widetilde{\pi}_{\mathcal V^\#}
:=P_{\mathcal V^\#}\circ\pi_S:
D_{\max}(\Delta_X)\to\mathcal V^\#\).
Thus
\(
\pi_S(u)
=
\widetilde{\pi}_{\mathcal V}(u)
+
\widetilde{\pi}_{\mathcal V^\#}(u)
\)
for every \(u\in D_{\max}(\Delta_X)\).

Fix \(\lambda\in\rho(\Delta_{X,\mathcal V})\). By
Corollary~\ref{cor:Y-projection},
\(Y_{\mathcal V}(\lambda)\) is a projection satisfying
\[
\ker Y_{\mathcal V}(\lambda)=D_{\mathcal V},
\qquad
\operatorname{ran}Y_{\mathcal V}(\lambda)
=
\ker(\Delta_X-\lambda).
\]
Therefore, it induces a linear isomorphism
\[
\overline{Y}_{\mathcal V}(\lambda):
D_{\max}(\Delta_X)/D_{\mathcal V}
\longrightarrow
\ker(\Delta_X-\lambda),
\qquad
[u]\longmapsto Y_{\mathcal V}(\lambda)u.
\]

Moreover, by definition,
\(\ker\widetilde{\pi}_{\mathcal V^\#}=D_{\mathcal V}\).
Since \(\pi_S\) is surjective, so is
\(\widetilde{\pi}_{\mathcal V^\#}\). Hence, by the first
isomorphism theorem, it induces a linear isomorphism
\[
\overline{\pi}_{\mathcal V^\#}:
D_{\max}(\Delta_X)/D_{\mathcal V}
\longrightarrow
\mathcal V^\#,
\qquad
[u]\longmapsto
\widetilde{\pi}_{\mathcal V^\#}(u).
\]
We define the Poisson operator
\[
\Gamma_{\mathcal V,\mathcal V^\#}^{X}(\lambda):
\mathcal V^\#
\longrightarrow
\ker(\Delta_X-\lambda)
\]
by
\(
\Gamma_{\mathcal V,\mathcal V^\#}^{X}(\lambda)
:=
\overline{Y}_{\mathcal V}(\lambda)
\circ
\overline{\pi}_{\mathcal V^\#}^{-1}.
\)
Equivalently, if \(f\in\mathcal V^\#\) and
\(u\in D_{\max}(\Delta_X)\) satisfies
\(\widetilde{\pi}_{\mathcal V^\#}(u)=f\), then
\(
\Gamma_{\mathcal V,\mathcal V^\#}^{X}(\lambda)f
=
Y_{\mathcal V}(\lambda)u.
\)
Consequently,
\(
Y_{\mathcal V}(\lambda)
=
\Gamma_{\mathcal V,\mathcal V^\#}^{X}(\lambda)
\circ
\widetilde{\pi}_{\mathcal V^\#}.
\)

\begin{defn}\label{defn:formal-solution}
Let \(\lambda\in\mb C\). A function
\(u_\lambda\in D_{\max}(\Delta_X)\) is called a formal solution of
\(\Delta_Xu_\lambda=\lambda u_\lambda\) if
\((\Delta_X-\lambda)u_\lambda=0\) in the sense of distributions on
\(X'\). We denote the space of formal solutions at \(\lambda\) by
\(\ker(\Delta_X-\lambda)\). Throughout this section, this kernel is
understood to be taken in \(D_{\max}(\Delta_X)\).
\end{defn}

\begin{prop}\label{prop:Gamma-characterization}
Let \(\lambda\in\rho(\Delta_{X,\mathcal V})\) and
\(f\in\mathcal V^\#\). A function
\(u_\lambda\in D_{\max}(\Delta_X)\) is a formal solution satisfying
the boundary condition
\(\widetilde{\pi}_{\mathcal V^\#}(u_\lambda)=f\) if and only if
\(u_\lambda
=\Gamma_{\mathcal V,\mathcal V^\#}^{X}(\lambda)f\).
\end{prop}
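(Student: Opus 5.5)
The plan is to prove both directions directly from Corollary~\ref{cor:Y-projection} and the definition of the Poisson operator. The key structural fact is that \(Y_{\mathcal V}(\lambda)\) is the projection onto \(\ker(\Delta_X-\lambda)\) along \(D_{\mathcal V}=\ker\widetilde{\pi}_{\mathcal V^\#}\).

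For the ``if'' direction, set \(u_\lambda:=\Gamma_{\mathcal V,\mathcal V^\#}^{X}(\lambda)f\). By construction, \(u_\lambda\) lies in \(\operatorname{ran}Y_{\mathcal V}(\lambda)=\ker(\Delta_X-\lambda)\), so it is a formal solution. It remains to check the boundary condition.
\begin{itemize}
\item Choose \(u\in D_{\max}(\Delta_X)\) with \(\widetilde{\pi}_{\mathcal V^\#}(u)=f\); such a \(u\) exists by surjectivity of \(\widetilde{\pi}_{\mathcal V^\#}\).
\item By definition, \(u_\lambda=Y_{\mathcal V}(\lambda)u\).
\item Lemma~\ref{lem:Y-series} gives \(u_\lambda-u=R_{\mathcal V}(\lambda)(\Delta_X-\lambda)u\in D_{\mathcal V}\), since the resolvent maps \(L^2(X')\) into the domain of \(\Delta_{X,\mathcal V}\).
\item Because \(\ker\widetilde{\pi}_{\mathcal V^\#}=D_{\mathcal V}\), this yields \(\widetilde{\pi}_{\mathcal V^\#}(u_\lambda)=\widetilde{\pi}_{\mathcal V^\#}(u)=f\).
\end{itemize}

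For the ``only if'' direction, suppose \(u_\lambda\in\ker(\Delta_X-\lambda)\) satisfies \(\widetilde{\pi}_{\mathcal V^\#}(u_\lambda)=f\). Then \((\Delta_X-\lambda)u_\lambda=0\), and Lemma~\ref{lem:Y-series} gives \(Y_{\mathcal V}(\lambda)u_\lambda=u_\lambda\). Equivalently, \(Y_{\mathcal V}(\lambda)\) restricts to the identity on \(\ker(\Delta_X-\lambda)\). The equivalent description of \(\Gamma\), applied with \(u=u_\lambda\) as the chosen representative of \(f\), then gives \(\Gamma_{\mathcal V,\mathcal V^\#}^{X}(\lambda)f=Y_{\mathcal V}(\lambda)u_\lambda=u_\lambda\).

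The only delicate point is well-definedness, namely that the value of \(\Gamma\) does not depend on the representative \(u\). Two representatives differ by an element of \(\ker\widetilde{\pi}_{\mathcal V^\#}=D_{\mathcal V}=\ker Y_{\mathcal V}(\lambda)\), so they have the same image under \(Y_{\mathcal V}(\lambda)\). This is already built into the quotient construction of \(\overline{Y}_{\mathcal V}(\lambda)\circ\overline{\pi}_{\mathcal V^\#}^{-1}\).

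Uniqueness can be seen directly as well. If two formal solutions have the same \(\mathcal V^\#\)-component \(f\), their difference lies in \(D_{\mathcal V}\cap\ker(\Delta_X-\lambda)\). This intersection is \(\{0\}\), because \(\lambda\in\rho(\Delta_{X,\mathcal V})\), or equivalently by the direct sum decomposition in Corollary~\ref{cor:Y-projection}. I expect no real obstacle beyond this bookkeeping.
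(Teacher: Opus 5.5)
Your proof is correct and is essentially the paper's argument. The paper proves the forward direction by observing that \(u_\lambda-\Gamma_{\mathcal V,\mathcal V^\#}^{X}(\lambda)f\) lies in \(D_{\mathcal V}\cap\ker(\Delta_X-\lambda)=\{0\}\) because \(\lambda\in\rho(\Delta_{X,\mathcal V})\), which is your closing uniqueness remark; your main route (taking \(u_\lambda\) itself as the representative and using that \(Y_{\mathcal V}(\lambda)\) fixes formal solutions) is an equivalent repackaging of that fact, and your explicit check via Lemma~\ref{lem:Y-series} that \(\widetilde{\pi}_{\mathcal V^\#}\bigl(\Gamma_{\mathcal V,\mathcal V^\#}^{X}(\lambda)f\bigr)=f\) fills in what the paper treats as immediate from the definition.
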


\begin{proof}
Suppose that \(u_\lambda\) is a formal solution satisfying the
boundary condition, and set
\(v_\lambda:=
\Gamma_{\mathcal V,\mathcal V^\#}^{X}(\lambda)f\).
By definition,
\(v_\lambda\in\ker(\Delta_X-\lambda)\) and
\(\widetilde{\pi}_{\mathcal V^\#}(v_\lambda)=f\). Hence
\(\widetilde{\pi}_{\mathcal V^\#}
(u_\lambda-v_\lambda)=0\), so
\(u_\lambda-v_\lambda\in D_{\mathcal V}\).
Since both \(u_\lambda\) and \(v_\lambda\) are formal solutions, we
have
\((\Delta_{X,\mathcal V}-\lambda)
(u_\lambda-v_\lambda)=0\).
Since \(\lambda\in\rho(\Delta_{X,\mathcal V})\), it follows that
\(u_\lambda-v_\lambda=0\). Hence
\(u_\lambda=v_\lambda
=\Gamma_{\mathcal V,\mathcal V^\#}^{X}(\lambda)f\).
The converse follows immediately from the definition of
\(\Gamma_{\mathcal V,\mathcal V^\#}^{X}(\lambda)\).
\end{proof}

Thus \(\Gamma_{\mathcal V,\mathcal V^\#}^{X}(\lambda)\) parametrizes
formal solutions uniquely by their \(\mathcal V^\#\)-boundary
coordinates.

\begin{defn}\label{defn:Weyl-function}
The Weyl function associated with the Lagrangian pair
\((\mathcal V,\mathcal V^\#)\) is the operator-valued function
\(M_{\mathcal V,\mathcal V^\#}^{X}:
\rho(\Delta_{X,\mathcal V})\to
\operatorname{Hom}_{\mb C}(\mathcal V^\#,\mathcal V)\)
defined by
\[
M_{\mathcal V,\mathcal V^\#}^{X}(\lambda)
:=
\widetilde{\pi}_{\mathcal V}\circ
\Gamma_{\mathcal V,\mathcal V^\#}^{X}(\lambda).
\]
\end{defn}

Let \(\lambda\in\rho(\Delta_{X,\mathcal V})\) and
\(f\in\mathcal V^\#\). Since
\(\widetilde{\pi}_{\mathcal V^\#}
\bigl(
\Gamma_{\mathcal V,\mathcal V^\#}^{X}(\lambda)f
\bigr)=f\), the decomposition
\(V_S=\mathcal V\oplus\mathcal V^\#\) gives
\(
\pi_S\bigl(
\Gamma_{\mathcal V,\mathcal V^\#}^{X}(\lambda)f
\bigr)
=
M_{\mathcal V,\mathcal V^\#}^{X}(\lambda)f+f.
\)
Equivalently, if \(u\in D_{\max}(\Delta_X)\) satisfies
\(\widetilde{\pi}_{\mathcal V^\#}(u)=f\), then
\(Y_{\mathcal V}(\lambda)u
=\Gamma_{\mathcal V,\mathcal V^\#}^{X}(\lambda)f\), and hence
\(
\pi_S\bigl(Y_{\mathcal V}(\lambda)u\bigr)
=
M_{\mathcal V,\mathcal V^\#}^{X}(\lambda)f+f.
\)

\begin{defn}\label{defn:symplectic-adjoint}
Let \((\mathcal V,\mathcal V^\#)\) be a Lagrangian pair in
\((V_S,\omega_S)\). For a linear map
\(A:\mathcal V^\#\to\mathcal V\), its symplectic adjoint is the
unique linear map
\(A^{\dagger_{\omega_S}}:\mathcal V^\#\to\mathcal V\) satisfying
\(
\omega_S(Af,g)
=
\omega_S(f,A^{\dagger_{\omega_S}}g)
\)
for all \(f,g\in\mathcal V^\#\). Equivalently,
\(
\omega_S(Af,g)
=
-\overline{
\omega_S(A^{\dagger_{\omega_S}}g,f)
}.
\)
Existence and uniqueness follow from the nondegeneracy of the pairing
between \(\mathcal V\) and \(\mathcal V^\#\) induced by
\(\omega_S\). When no confusion can arise, we write
\(A^\dagger\) for \(A^{\dagger_{\omega_S}}\).
\end{defn}

\begin{lem}\label{lem:weyl-function-hermitian-symmetry}
Let \((\mathcal V,\mathcal V^\#)\) be a Lagrangian pair in
\((V_S,\omega_S)\). Then, for every
\(\lambda\in\rho(\Delta_{X,\mathcal V})\), one has
\[
M_{\mathcal V,\mathcal V^\#}^{X}(\lambda)
^{\dagger_{\omega_S}}
=
-M_{\mathcal V,\mathcal V^\#}^{X}(\overline{\lambda}).
\]
Consequently, if \(\beta\) is a basis of \(\mathcal V\) and
\(\beta^\#\) is its \(\omega_S\)-dual basis in \(\mathcal V^\#\),
then
\[
[M_{\mathcal V,\mathcal V^\#}^{X}(\lambda)]_{\beta^\#}^{\beta}
=
([M_{\mathcal V,\mathcal V^\#}^{X}(\overline{\lambda})]
_{\beta^\#}^{\beta})^*.
\]
In particular, if
\(\lambda\in\rho(\Delta_{X,\mathcal V})\cap\mb R\), then
\([M_{\mathcal V,\mathcal V^\#}^{X}(\lambda)]
_{\beta^\#}^{\beta}\) is Hermitian.
\end{lem}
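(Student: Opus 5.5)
The plan is to evaluate the Green form \(q_X\) on a pair of formal solutions at the spectral parameters \(\lambda\) and \(\overline{\lambda}\), and to compare the two sides using the Lagrangian property of \(\mathcal V\) and \(\mathcal V^\#\).

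Fix \(\lambda\in\rho(\Delta_{X,\mathcal V})\). Since \(\Delta_{X,\mathcal V}\) is self-adjoint, \(\overline{\lambda}\in\rho(\Delta_{X,\mathcal V})\) as well. For \(f,g\in\mathcal V^\#\), set
\[
u:=\Gamma_{\mathcal V,\mathcal V^\#}^{X}(\lambda)f,
\qquad
v:=\Gamma_{\mathcal V,\mathcal V^\#}^{X}(\overline{\lambda})g.
\]
Because \(\Delta_Xu=\lambda u\) and \(\Delta_Xv=\overline{\lambda}v\), we get
\[
q_X(u,v)=\lambda\langle u,v\rangle-\langle u,\overline{\lambda}v\rangle=\lambda\langle u,v\rangle-\lambda\langle u,v\rangle=0,
\]
where the inner product is conjugate-linear in the second slot. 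On the other hand, Lemma~\ref{lem:green-form-global} together with the identity \(\pi_S(\Gamma(\lambda)f)=M(\lambda)f+f\) gives
\[
0=\omega_S\bigl(M(\lambda)f+f,\;M(\overline{\lambda})g+g\bigr).
\]
We expand this by sesquilinearity. The term \(\omega_S(M(\lambda)f,M(\overline\lambda)g)\) vanishes because \(\mathcal V\) is Lagrangian, and \(\omega_S(f,g)\) vanishes because \(\mathcal V^\#\) is Lagrangian. What remains is
\[
\omega_S(M(\lambda)f,g)+\omega_S(f,M(\overline{\lambda})g)=0,
\]
that is, \(\omega_S(M(\lambda)f,g)=\omega_S(f,-M(\overline\lambda)g)\) for all \(f,g\). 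By the uniqueness in Definition~\ref{defn:symplectic-adjoint}, this says \(M(\lambda)^\dagger=-M(\overline\lambda)\).

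For the matrix statement, let \(\beta=\{e_k\}\) be a basis of \(\mathcal V\) and \(\beta^\#=\{e_k^\#\}\) its dual basis, normalized so that \(\omega_S(e_k,e_l^\#)=\delta_{kl}\); I will take this as the meaning of \(\omega_S\)-dual basis. Write \(M(\lambda)e_l^\#=\sum_k m_{kl}(\lambda)e_k\). Then \(\omega_S(M(\lambda)e_l^\#,e_k^\#)=m_{kl}(\lambda)\). By skew-Hermitian symmetry, \(\omega_S(e_l^\#,e_k)=-\overline{\omega_S(e_k,e_l^\#)}=-\delta_{kl}\), so
\[
\omega_S\bigl(e_l^\#,-M(\overline\lambda)e_k^\#\bigr)=\overline{m_{lk}(\overline\lambda)}.
\]
The adjoint identity now gives \(m_{kl}(\lambda)=\overline{m_{lk}(\overline\lambda)}\), which is exactly the asserted conjugate-transpose relation. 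For real \(\lambda\) this says the matrix is Hermitian.

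The only delicate point is bookkeeping of signs and conjugations: which slot of \(\omega_S\) is conjugate-linear, and the resulting sign from \(\omega_S(e^\#,e)=-1\). I would check this carefully against the convention \(\omega_P(f_{P,j}^\#,f_{P,k})=-\delta_{jk}\). The analytic content reduces entirely to the vanishing of \(q_X\) on solutions at conjugate parameters.
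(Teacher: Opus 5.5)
Your proposal is correct and follows essentially the paper's own argument. You use the vanishing of \(q_X\) on formal solutions at \(\lambda\) and \(\overline{\lambda}\), then the global Green formula with the Lagrangian property of both \(\mathcal V\) and \(\mathcal V^\#\), and finally translate the adjoint identity into matrices in dual bases; the paper packages that last step as \([A^{\dagger_{\omega_S}}]_{\beta^\#}^{\beta}=-([A]_{\beta^\#}^{\beta})^*\), whereas you compute it entrywise, and your sign bookkeeping checks out (the conclusion is the same whether the dual basis is normalized by \(\omega_S(e_k,e_l^\#)=\delta_{kl}\) or by \(\omega_S(e_l^\#,e_k)=\delta_{kl}\)).
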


\begin{proof}
Since \(\Delta_{X,\mathcal V}\) is self-adjoint,
\(\overline{\lambda}\in\rho(\Delta_{X,\mathcal V})\). Let
\(f,g\in\mathcal V^\#\), and set
\(u_\lambda:=
\Gamma_{\mathcal V,\mathcal V^\#}^{X}(\lambda)f\) and
\(v_{\overline{\lambda}}:=
\Gamma_{\mathcal V,\mathcal V^\#}^{X}
(\overline{\lambda})g\).
Since
\(\Delta_Xu_\lambda=\lambda u_\lambda\) and
\(\Delta_Xv_{\overline{\lambda}}
=\overline{\lambda}v_{\overline{\lambda}}\), one has
\(q_X(u_\lambda,v_{\overline{\lambda}})=0\).

The global Green formula, the definition of the Weyl function, and
the fact that both \(\mathcal V\) and \(\mathcal V^\#\) are
Lagrangian give
\[
\omega_S\bigl(
M_{\mathcal V,\mathcal V^\#}^{X}(\lambda)f,g
\bigr)
=
-\omega_S\bigl(
f,
M_{\mathcal V,\mathcal V^\#}^{X}(\overline{\lambda})g
\bigr).
\]
Equivalently,
\[
\omega_S\bigl(
M_{\mathcal V,\mathcal V^\#}^{X}(\lambda)f,g
\bigr)
=
\omega_S\bigl(
f,
-M_{\mathcal V,\mathcal V^\#}^{X}(\overline{\lambda})g
\bigr).
\]
By the definition of the symplectic adjoint,
\(
M_{\mathcal V,\mathcal V^\#}^{X}(\lambda)
^{\dagger_{\omega_S}}
=
-M_{\mathcal V,\mathcal V^\#}^{X}(\overline{\lambda}).
\)

With respect to the \(\omega_S\)-dual bases \(\beta^\#\) and
\(\beta\), one has
\(
[A^{\dagger_{\omega_S}}]_{\beta^\#}^{\beta}
=
-([A]_{\beta^\#}^{\beta})^*.
\)
Applying this identity to
\(A=M_{\mathcal V,\mathcal V^\#}^{X}(\lambda)\) gives
\(
[M_{\mathcal V,\mathcal V^\#}^{X}(\overline{\lambda})]
_{\beta^\#}^{\beta}
=
([M_{\mathcal V,\mathcal V^\#}^{X}(\lambda)]
_{\beta^\#}^{\beta})^*,
\)
which is equivalent to the asserted identity. If \(\lambda\) is
real, it follows that
\([M_{\mathcal V,\mathcal V^\#}^{X}(\lambda)]
_{\beta^\#}^{\beta}\) is Hermitian.
\end{proof}

For each \(i\geq0\), recall that
\(q_i^{\mathcal V}:D_{\max}(\Delta_X)\to\mb C\) is defined by
\(q_i^{\mathcal V}(u):=q_X(u,\phi_i^{\mathcal V})\).
Since \(\phi_i^{\mathcal V}\in D_{\mathcal V}\) and
\(\Delta_{X,\mathcal V}\) is self-adjoint, one has
\(q_i^{\mathcal V}(u)=0\) for every \(u\in D_{\mathcal V}\).
Thus \(q_i^{\mathcal V}\) induces a linear functional
\(\overline{q}_i^{\mathcal V}:
D_{\max}(\Delta_X)/D_{\mathcal V}\to\mb C\), given by
\(\overline{q}_i^{\mathcal V}([u])=q_i^{\mathcal V}(u)\).

Define
\(\omega_i^{\mathcal V,\mathcal V^\#}:
\mathcal V^\#\to\mb C\) by
\(\omega_i^{\mathcal V,\mathcal V^\#}
:=\overline{q}_i^{\mathcal V}
\circ\overline{\pi}_{\mathcal V^\#}^{-1}\).
Equivalently, if \(u\in D_{\max}(\Delta_X)\) satisfies
\(\widetilde{\pi}_{\mathcal V^\#}(u)=f\), then
\(\omega_i^{\mathcal V,\mathcal V^\#}(f)
=q_i^{\mathcal V}(u)\).

It follows from the eigenfunction expansion of
\(Y_{\mathcal V}(\lambda)\) and Parseval's identity that, for
\(\lambda,\mu\in\rho(\Delta_{X,\mathcal V})\) and
\(f,g\in\mathcal V^\#\),
\[
\left\langle
\Gamma_{\mathcal V,\mathcal V^\#}^{X}(\lambda)f,
\Gamma_{\mathcal V,\mathcal V^\#}^{X}(\mu)g
\right\rangle_{L^2(X')}
=
\sum_{i\geq0}
\frac{
\omega_i^{\mathcal V,\mathcal V^\#}(f)
\overline{\omega_i^{\mathcal V,\mathcal V^\#}(g)}
}{
(\lambda-\lambda_i^{\mathcal V})
(\overline{\mu}-\lambda_i^{\mathcal V})
}.
\]

Since \(\Delta_{X,\mathcal V}\) is self-adjoint,
\(\lambda\in\rho(\Delta_{X,\mathcal V})\) implies
\(\overline{\lambda}\in\rho(\Delta_{X,\mathcal V})\). Taking
\(\mu=\overline{\lambda}\), we obtain
\[
\left\langle
\Gamma_{\mathcal V,\mathcal V^\#}^{X}(\lambda)f,
\Gamma_{\mathcal V,\mathcal V^\#}^{X}(\overline{\lambda})g
\right\rangle_{L^2(X')}
=
\sum_{i\geq0}
\frac{
\omega_i^{\mathcal V,\mathcal V^\#}(f)
\overline{\omega_i^{\mathcal V,\mathcal V^\#}(g)}
}{
(\lambda-\lambda_i^{\mathcal V})^2
}.
\]

\begin{lem}\label{lem:M-derivative-identity}
The Weyl function
\(M_{\mathcal V,\mathcal V^\#}^{X}\) is holomorphic on
\(\rho(\Delta_{X,\mathcal V})\). For
\(f,g\in\mathcal V^\#\) and
\(\lambda\in\rho(\Delta_{X,\mathcal V})\), one has
\[
\omega_S\left(
\frac{d}{d\lambda}
M_{\mathcal V,\mathcal V^\#}^{X}(\lambda)f,g
\right)
=
\left\langle
\Gamma_{\mathcal V,\mathcal V^\#}^{X}(\lambda)f,
\Gamma_{\mathcal V,\mathcal V^\#}^{X}
(\overline{\lambda})g
\right\rangle_{L^2(X')}.
\]
Equivalently,
\[
\omega_S\left(
\frac{d}{d\lambda}
M_{\mathcal V,\mathcal V^\#}^{X}(\lambda)f,g
\right)
=
\sum_{i\geq0}
\frac{
\omega_i^{\mathcal V,\mathcal V^\#}(f)
\overline{\omega_i^{\mathcal V,\mathcal V^\#}(g)}
}{
(\lambda-\lambda_i^{\mathcal V})^2
},
\]
where
\(\omega_i^{\mathcal V,\mathcal V^\#}
:=
\overline{q}_i^{\mathcal V}
\circ\overline{\pi}_{\mathcal V^\#}^{-1}\).
\end{lem}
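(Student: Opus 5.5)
The plan is to derive a resolvent-type difference identity for the Weyl function by evaluating the Green form $q_X$ on two Poisson solutions at different spectral parameters, and then pass to the limit. Fix $f,g\in\mathcal V^\#$ and $\lambda,\mu\in\rho(\Delta_{X,\mathcal V})$, so that $\overline{\mu}\in\rho(\Delta_{X,\mathcal V})$ as well. Set $u_\lambda:=\Gamma_{\mathcal V,\mathcal V^\#}^{X}(\lambda)f$ and $v:=\Gamma_{\mathcal V,\mathcal V^\#}^{X}(\overline{\mu})g$. Since $\Delta_Xu_\lambda=\lambda u_\lambda$ and $\Delta_Xv=\overline{\mu}v$, the definition of $q_X$ gives
\[
q_X(u_\lambda,v)=(\lambda-\mu)\langle u_\lambda,v\rangle_{L^2(X')}.
\]

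The boundary side is computed with Lemma~\ref{lem:green-form-global}. We have $\pi_S(u_\lambda)=M(\lambda)f+f$ and $\pi_S(v)=M(\overline{\mu})g+g$, where $M:=M_{\mathcal V,\mathcal V^\#}^{X}$. Because $\mathcal V$ and $\mathcal V^\#$ are Lagrangian, the $\mathcal V$–$\mathcal V$ and $\mathcal V^\#$–$\mathcal V^\#$ pairings vanish, and therefore
\[
q_X(u_\lambda,v)=\omega_S(M(\lambda)f,g)+\omega_S(f,M(\overline{\mu})g).
\]
By Lemma~\ref{lem:weyl-function-hermitian-symmetry}, $\omega_S(f,M(\overline{\mu})g)=-\omega_S(M(\mu)f,g)$. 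Combining the two computations yields
\[
\omega_S\bigl((M(\lambda)-M(\mu))f,g\bigr)=(\lambda-\mu)\bigl\langle \Gamma(\lambda)f,\Gamma(\overline{\mu})g\bigr\rangle_{L^2(X')}.
\]

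The main analytic step is holomorphy. Choose $u\in D_{\max}(\Delta_X)$ with $\widetilde{\pi}_{\mathcal V^\#}(u)=f$. By Lemma~\ref{lem:Y-series},
\[
\Gamma(\lambda)f=u+R_{\mathcal V}(\lambda)(\Delta_X-\lambda)u .
\]
This is holomorphic in $\lambda$ with values in $D_{\max}(\Delta_X)$ under the graph norm, since $\Delta_XR_{\mathcal V}(\lambda)=\lambda R_{\mathcal V}(\lambda)-I$ and the resolvent is holomorphic. Moreover, $\pi_S$ is continuous in the graph norm. Indeed, by Corollary~\ref{cor:critical-asymptotic-projection} its coefficients are the values $q_X(\cdot,\chi_Pf_{P,j}^{\#})$ and $q_X(\cdot,\chi_Pf_{P,j})$, which are bounded in the graph norm. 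Hence $M(\lambda)f=P_{\mathcal V}\pi_S(\Gamma(\lambda)f)$ is holomorphic.

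Finally, divide the difference identity by $\lambda-\mu$ and let $\mu\to\lambda$. We use that $\omega_S$ is linear in its first argument, and that $\Gamma(\overline{\mu})g\to\Gamma(\overline{\lambda})g$ in $L^2(X')$, which holds by the same continuity. This gives the first asserted formula. The second formula then follows by inserting the Parseval series for $\langle\Gamma(\lambda)f,\Gamma(\overline{\lambda})g\rangle_{L^2(X')}$ established just before the lemma.
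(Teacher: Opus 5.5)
Your proof is correct. The holomorphy step and the closing Parseval step are the same as the paper's. Your derivation of the graph-norm continuity of \(\pi_S\) from Corollary~\ref{cor:critical-asymptotic-projection} is in fact more explicit than the paper, which only asserts it.

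The core identity, however, is reached by a different route. Write \(M=M_{\mathcal V,\mathcal V^\#}^{X}\) and \(\Gamma=\Gamma_{\mathcal V,\mathcal V^\#}^{X}\).

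\textbf{The paper's route.} It differentiates \(u_f(\lambda)=\Gamma(\lambda)f\) in the graph norm. This gives \((\Delta_X-\lambda)u_f'(\lambda)=u_f(\lambda)\) and \(\pi_S(u_f'(\lambda))=\frac{d}{d\lambda}M(\lambda)f\in\mathcal V\). It then computes \(q_X(u_f'(\lambda),\Gamma(\overline{\lambda})g)\) in two ways. The Lagrangian property of \(\mathcal V\) alone removes the term \(\omega_S(\frac{d}{d\lambda}M(\lambda)f,M(\overline{\lambda})g)\), so Lemma~\ref{lem:weyl-function-hermitian-symmetry} is never used.

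\textbf{Your route.} You first prove the two-point identity \(\omega_S((M(\lambda)-M(\mu))f,g)=(\lambda-\mu)\langle\Gamma(\lambda)f,\Gamma(\overline{\mu})g\rangle_{L^2(X')}\). This needs the symmetry lemma, or equivalently your unsymmetrized identity with both spectral parameters equal to \(\mu\). After that, passing to the limit only requires \(L^2\)-continuity of \(\Gamma(\cdot)g\).

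\textbf{What each buys.} The paper's route is shorter. It also produces the fact \(u_f'(\lambda)\in D_{\mathcal V}\), which it reuses in Lemma~\ref{lem:boundary-resolvent-M-derivative}. Your route gives the stronger, standard boundary-triple identity for all \(\lambda,\mu\in\rho(\Delta_{X,\mathcal V})\). Taking \(\mu=\overline{\lambda}\) in it yields the Nevanlinna-type relation \(\omega_S((M(\lambda)-M(\overline{\lambda}))f,f)=(\lambda-\overline{\lambda})\|\Gamma(\lambda)f\|_{L^2(X')}^2\).
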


\begin{proof}
Fix \(f\in\mathcal V^\#\), and choose
\(w_f\in D_{\max}(\Delta_X)\) such that
\(\widetilde{\pi}_{\mathcal V^\#}(w_f)=f\). Then
\[
\Gamma_{\mathcal V,\mathcal V^\#}^{X}(\lambda)f
=
Y_{\mathcal V}(\lambda)w_f
=
w_f+
R_{\mathcal V}(\lambda)(\Delta_X-\lambda)w_f.
\]
The resolvent \(R_{\mathcal V}(\lambda)\) is holomorphic on
\(\rho(\Delta_{X,\mathcal V})\), with values in
\(D_{\mathcal V}\) equipped with the graph norm. Hence
\(\Gamma_{\mathcal V,\mathcal V^\#}^{X}(\lambda)f\) is
holomorphic as a \(D_{\max}(\Delta_X)\)-valued function.
Since \(\widetilde{\pi}_{\mathcal V}\) is continuous with respect
to the graph norm, it follows that
\(M_{\mathcal V,\mathcal V^\#}^{X}\) is holomorphic.
Set
\(u_f(\lambda):=
\Gamma_{\mathcal V,\mathcal V^\#}^{X}(\lambda)f\) and
\(u_g(\overline{\lambda}):=
\Gamma_{\mathcal V,\mathcal V^\#}^{X}
(\overline{\lambda})g\).
Then
\((\Delta_X-\lambda)u_f(\lambda)=0\) and
\((\Delta_X-\overline{\lambda})
u_g(\overline{\lambda})=0\).
Moreover,
\(\pi_S(u_f(\lambda))
=M_{\mathcal V,\mathcal V^\#}^{X}(\lambda)f+f\).
Differentiating with respect to \(\lambda\), we obtain
\(
\pi_S\bigl(u_f'(\lambda)\bigr)
=
\frac{d}{d\lambda}
M_{\mathcal V,\mathcal V^\#}^{X}(\lambda)f
\)
and
\(
(\Delta_X-\lambda)u_f'(\lambda)=u_f(\lambda).
\)
Using the definition of \(q_X\) and the convention that the
\(L^2(X')\)-inner product is linear in the first variable, we obtain
\[
\begin{aligned}
q_X\bigl(u_f'(\lambda),u_g(\overline{\lambda})\bigr)
&=
\left\langle
\lambda u_f'(\lambda)+u_f(\lambda),
u_g(\overline{\lambda})
\right\rangle_{L^2(X')} \\
&\quad-
\left\langle
u_f'(\lambda),
\overline{\lambda}u_g(\overline{\lambda})
\right\rangle_{L^2(X')} \\
&=
\left\langle
u_f(\lambda),u_g(\overline{\lambda})
\right\rangle_{L^2(X')}.
\end{aligned}
\]
On the other hand, the global Green formula gives
\[
\begin{aligned}
q_X\bigl(u_f'(\lambda),u_g(\overline{\lambda})\bigr)
&=
\omega_S\left(
\frac{d}{d\lambda}
M_{\mathcal V,\mathcal V^\#}^{X}(\lambda)f,
M_{\mathcal V,\mathcal V^\#}^{X}
(\overline{\lambda})g+g
\right) \\
&=
\omega_S\left(
\frac{d}{d\lambda}
M_{\mathcal V,\mathcal V^\#}^{X}(\lambda)f,g
\right),
\end{aligned}
\]
because both
\(\frac{d}{d\lambda}
M_{\mathcal V,\mathcal V^\#}^{X}(\lambda)f\)
and
\(M_{\mathcal V,\mathcal V^\#}^{X}
(\overline{\lambda})g\)
belong to the Lagrangian subspace \(\mathcal V\).
Comparing the two expressions proves the first identity. The series
representation follows from the Parseval identity established above.
\end{proof}

\begin{cor}\label{cor:weyl-function-simple-poles}
Let \((\mathcal V,\mathcal V^\#)\) be a Lagrangian pair in
\((V_S,\omega_S)\). Then the associated Weyl function
\(M_{\mathcal V,\mathcal V^\#}^{X}(\lambda)\) extends
meromorphically across the eigenvalues of
\(\Delta_{X,\mathcal V}\), and all its possible poles are simple.

More precisely, let \(\lambda_0\) be an eigenvalue of
\(\Delta_{X,\mathcal V}\). Then, near \(\lambda_0\),
\[
M_{\mathcal V,\mathcal V^\#}^{X}(\lambda)
=
-\frac{B_{\lambda_0}}{\lambda-\lambda_0}
+
H_{\lambda_0}(\lambda),
\]
where \(H_{\lambda_0}\) is holomorphic near \(\lambda_0\), and
\(B_{\lambda_0}:\mathcal V^\#\to\mathcal V\) is uniquely determined
by
\[
\omega_S(B_{\lambda_0}f,g)
=
\sum_{\lambda_i^{\mathcal V}=\lambda_0}
\omega_i^{\mathcal V,\mathcal V^\#}(f)
\overline{\omega_i^{\mathcal V,\mathcal V^\#}(g)}
\]
for all \(f,g\in\mathcal V^\#\). In particular, \(\lambda_0\) is an
actual pole of \(M_{\mathcal V,\mathcal V^\#}^{X}\) if and only if
\(B_{\lambda_0}\neq0\).
\end{cor}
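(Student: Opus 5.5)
Our plan is to obtain an explicit pole decomposition of \(M_{\mathcal V,\mathcal V^\#}^{X}(\lambda)\) near \(\lambda_0\) from the eigenfunction expansion of the Poisson operator, and then read off the residue from the Green pairing with \(\mathcal V^\#\).

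\emph{Pole decomposition of the Poisson operator.} Fix \(f\in\mathcal V^\#\) and choose \(w_f\in D_{\max}(\Delta_X)\) with \(\widetilde{\pi}_{\mathcal V^\#}(w_f)=f\). By Lemma~\ref{lem:Y-series},
\[
\Gamma_{\mathcal V,\mathcal V^\#}^{X}(\lambda)f=w_f+R_{\mathcal V}(\lambda)(\Delta_X-\lambda)w_f .
\]
We split the resolvent as \(R_{\mathcal V}(\lambda)=\sum_{\lambda_i^{\mathcal V}=\lambda_0}(\lambda-\lambda_0)^{-1}\langle\cdot,\phi_i^{\mathcal V}\rangle\phi_i^{\mathcal V}+\widetilde R(\lambda)\). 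Here \(\widetilde R\) is holomorphic near \(\lambda_0\) with values in \(D_{\mathcal V}\), with the graph norm. The finite sum ranges over an eigenspace of finite multiplicity by Theorem~\ref{thm:compact-resolvent}. Writing \((\Delta_X-\lambda)w_f=(\Delta_X-\lambda_0)w_f-(\lambda-\lambda_0)w_f\), the \(-(\lambda-\lambda_0)w_f\) part cancels the pole against the first summand. The remaining coefficient is
\[
\langle(\Delta_X-\lambda_0)w_f,\phi_i^{\mathcal V}\rangle=q_X(w_f,\phi_i^{\mathcal V})=\omega_i^{\mathcal V,\mathcal V^\#}(f),
\]
because \(\Delta_X\phi_i^{\mathcal V}=\lambda_0\phi_i^{\mathcal V}\) with \(\lambda_0\) real. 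Hence
\[
\Gamma_{\mathcal V,\mathcal V^\#}^{X}(\lambda)f=\frac{1}{\lambda-\lambda_0}\sum_{\lambda_i^{\mathcal V}=\lambda_0}\omega_i^{\mathcal V,\mathcal V^\#}(f)\,\phi_i^{\mathcal V}+G_f(\lambda),
\]
where \(G_f\) is holomorphic near \(\lambda_0\) with values in \(D_{\max}(\Delta_X)\). The function \(G_f\) is well defined on the punctured disc, and it extends across \(\lambda_0\) because each piece does.

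\emph{Applying the boundary map.} Next we apply \(\widetilde{\pi}_{\mathcal V}\), which is continuous in the graph norm by Corollary~\ref{cor:critical-asymptotic-projection}. Since \(\phi_i^{\mathcal V}\in D_{\mathcal V}\), we have \(\widetilde{\pi}_{\mathcal V}(\phi_i^{\mathcal V})=\pi_S(\phi_i^{\mathcal V})\in\mathcal V\). This gives the meromorphic extension with at most a simple pole, with
\[
M(\lambda)=\frac{1}{\lambda-\lambda_0}C+H_{\lambda_0}(\lambda),\qquad Cf:=\sum_{\lambda_i^{\mathcal V}=\lambda_0}\omega_i^{\mathcal V,\mathcal V^\#}(f)\,\pi_S(\phi_i^{\mathcal V}).
\]
Setting \(B_{\lambda_0}:=-C\), it remains to compute \(\omega_S(Cf,g)\) for \(g\in\mathcal V^\#\). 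Choose \(w_g\) with \(\widetilde{\pi}_{\mathcal V^\#}(w_g)=g\). Since \(\pi_S(\phi_i^{\mathcal V})\in\mathcal V\) and \(\mathcal V\) is Lagrangian, Lemma~\ref{lem:green-form-global} gives
\[
\omega_S(\pi_S(\phi_i^{\mathcal V}),g)=\omega_S(\pi_S\phi_i^{\mathcal V},\pi_S w_g)=q_X(\phi_i^{\mathcal V},w_g)=-\overline{q_X(w_g,\phi_i^{\mathcal V})}=-\overline{\omega_i^{\mathcal V,\mathcal V^\#}(g)}.
\]
Therefore \(\omega_S(B_{\lambda_0}f,g)=\sum\omega_i(f)\overline{\omega_i(g)}\), as asserted.

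\emph{Uniqueness and the pole criterion.} Uniqueness of \(B_{\lambda_0}\) follows from the nondegeneracy of the \(\omega_S\)-pairing between \(\mathcal V\) and \(\mathcal V^\#\), used already in Definition~\ref{defn:symplectic-adjoint}. The last claim is immediate, since the principal part of a Laurent expansion is unique.

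\emph{Main obstacle.} The main obstacle is justifying that the regular part is holomorphic in the graph norm, rather than merely in \(L^2\), so that \(\widetilde{\pi}_{\mathcal V}\) may be applied to it. We expect to handle this by noting that \(\widetilde R(\lambda)=R_{\mathcal V}(\lambda)(1-E_{\lambda_0})\) on the spectral complement. Here \(E_{\lambda_0}\) denotes the spectral projection onto the \(\lambda_0\)-eigenspace. As \(\Delta_X\widetilde R(\lambda)=\lambda\widetilde R(\lambda)-(1-E_{\lambda_0})\), the graph-norm holomorphy reduces to \(L^2\) holomorphy.
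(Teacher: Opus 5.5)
Your proposal is correct, but it takes a different route from the paper.

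\textbf{What the paper does.} The paper never expands the Poisson operator. It starts from the derivative identity of Lemma~\ref{lem:M-derivative-identity}. There \(\omega_S\bigl(\frac{d}{d\lambda}M_{\mathcal V,\mathcal V^\#}^{X}(\lambda)f,g\bigr)\) is the scalar series \(\sum_i\omega_i^{\mathcal V,\mathcal V^\#}(f)\overline{\omega_i^{\mathcal V,\mathcal V^\#}(g)}(\lambda-\lambda_i^{\mathcal V})^{-2}\). The paper isolates the terms with \(\lambda_i^{\mathcal V}=\lambda_0\) and shows the tail converges locally uniformly by a Cauchy--Schwarz/\(\ell^2\) argument. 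It then integrates, using that a function on a punctured disc whose derivative extends holomorphically itself extends.

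\textbf{What you do instead.} You insert the Laurent expansion of \(R_{\mathcal V}(\lambda)\) (eigenprojection plus reduced resolvent) into \(\Gamma_{\mathcal V,\mathcal V^\#}^{X}(\lambda)f=w_f+R_{\mathcal V}(\lambda)(\Delta_X-\lambda)w_f\). You then apply the graph-norm continuous map \(\widetilde{\pi}_{\mathcal V}\). Finally, you identify the residue by one Green-formula computation, using \(\pi_S(\phi_i^{\mathcal V})\in\mathcal V\) and that \(\mathcal V\) is Lagrangian.

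\textbf{What each route buys.} The paper's route reuses an identity it needs anyway for the positivity of the Weyl curve. Near \(\lambda_0\) it only requires scalar \(\ell^2\) estimates, with no control of the reduced resolvent. Your route needs the extra observation \(\Delta_X\widetilde R(\lambda)=\lambda\widetilde R(\lambda)-(1-E_{\lambda_0})\), which you correctly supply. In exchange it avoids the integration step and gives more information: the residue of the Poisson operator itself, \(\sum_i\omega_i^{\mathcal V,\mathcal V^\#}(f)\phi_i^{\mathcal V}\), and the explicit formula \(B_{\lambda_0}f=-\sum_i\omega_i^{\mathcal V,\mathcal V^\#}(f)\,\pi_S(\phi_i^{\mathcal V})\). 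Since \(\omega_i^{\mathcal V,\mathcal V^\#}(f)=\omega_S\bigl(f,\pi_S(\phi_i^{\mathcal V})\bigr)\), this makes the pole criterion concrete. Namely, \(B_{\lambda_0}=0\) exactly when every \(\lambda_0\)-eigenfunction of \(\Delta_{X,\mathcal V}\) lies in \(D_{\min}(\Delta_X)\).
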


\begin{proof}
Let \(\lambda_0\) be an eigenvalue of
\(\Delta_{X,\mathcal V}\). Since \(\Delta_{X,\mathcal V}\) has
compact resolvent, \(\lambda_0\) is isolated and has finite
multiplicity. By Lemma~\ref{lem:M-derivative-identity}, the singular
part at \(\lambda_0\) of
\[
\omega_S\left(
\frac{d}{d\lambda}
M_{\mathcal V,\mathcal V^\#}^{X}(\lambda)f,g
\right)
\]
is
\[
\frac{1}{(\lambda-\lambda_0)^2}
\sum_{\lambda_i^{\mathcal V}=\lambda_0}
\omega_i^{\mathcal V,\mathcal V^\#}(f)
\overline{\omega_i^{\mathcal V,\mathcal V^\#}(g)}.
\]

The remaining part of the spectral series converges locally
uniformly near \(\lambda_0\). Indeed, fix
\(\zeta\in\rho(\Delta_{X,\mathcal V})\) near \(\lambda_0\). The
eigenfunction expansion of the Poisson operator shows that
\[
\left\{
\frac{\omega_i^{\mathcal V,\mathcal V^\#}(f)}
{\zeta-\lambda_i^{\mathcal V}}
\right\}_{i\geq0},
\qquad
\left\{
\frac{\omega_i^{\mathcal V,\mathcal V^\#}(g)}
{\zeta-\lambda_i^{\mathcal V}}
\right\}_{i\geq0}
\]
belong to \(\ell^2\). The Cauchy--Schwarz inequality therefore gives
the required locally uniform convergence after the terms satisfying
\(\lambda_i^{\mathcal V}=\lambda_0\) are removed. Hence the
remaining part is holomorphic near \(\lambda_0\).

The pairing
\(\omega_S|_{\mathcal V\times\mathcal V^\#}\) is nondegenerate.
Therefore, there exists a unique linear map
\(B_{\lambda_0}:\mathcal V^\#\to\mathcal V\) satisfying the identity
in the statement. It follows that
\[
\frac{d}{d\lambda}
M_{\mathcal V,\mathcal V^\#}^{X}(\lambda)
=
\frac{B_{\lambda_0}}{(\lambda-\lambda_0)^2}
+
K_{\lambda_0}(\lambda),
\]
where \(K_{\lambda_0}\) is holomorphic near \(\lambda_0\).
Consequently, the derivative of
\(
M_{\mathcal V,\mathcal V^\#}^{X}(\lambda)
+B_{\lambda_0}/(\lambda-\lambda_0)
\)
extends holomorphically across \(\lambda_0\). It follows that
\[
M_{\mathcal V,\mathcal V^\#}^{X}(\lambda)
=
-\frac{B_{\lambda_0}}{\lambda-\lambda_0}
+
H_{\lambda_0}(\lambda)
\]
for some \(H_{\lambda_0}\) holomorphic near \(\lambda_0\).
Therefore, every possible pole is simple, and \(\lambda_0\) is an
actual pole precisely when \(B_{\lambda_0}\neq0\).
\end{proof}

\section{Kre\u{\i}n's Formula}

We derive Kreĭn's formula for the resolvent difference of two
self-adjoint realizations and the resulting trace identity. Fix a
Lagrangian pair
\((\mathcal W,\mathcal W^\#)\), and denote by
\(P_{\mathcal W}:V_S\to\mathcal W\) and
\(P_{\mathcal W^\#}:V_S\to\mathcal W^\#\) the corresponding
projections. We take \(\Delta_{X,\mathcal W}\) as the reference
realization and express the formulas through the Weyl function of
\((\mathcal W,\mathcal W^\#)\).

Let \((\mathcal V,\mathcal V^\#)\) be another Lagrangian pair.
Define
\(A_{\mathcal V}
:=P_{\mathcal W}|_{\mathcal V}:
\mathcal V\to\mathcal W\)
and
\(C_{\mathcal V}
:=P_{\mathcal W^\#}|_{\mathcal V}:
\mathcal V\to\mathcal W^\#\).
Thus every \(v\in\mathcal V\) decomposes as
\(v=A_{\mathcal V}v+C_{\mathcal V}v\).

Define the symplectic adjoints
\(C_{\mathcal V}^{\dagger_{\omega_S}}:
\mathcal W\to\mathcal V^\#\)
and
\(A_{\mathcal V}^{\dagger_{\omega_S}}:
\mathcal W^\#\to\mathcal V^\#\)
by
\(\omega_S(v,C_{\mathcal V}^{\dagger_{\omega_S}}g)
=\omega_S(C_{\mathcal V}v,g)\)
and
\(\omega_S(v,A_{\mathcal V}^{\dagger_{\omega_S}}h)
=\omega_S(A_{\mathcal V}v,h)\)
for \(v\in\mathcal V\), \(g\in\mathcal W\), and
\(h\in\mathcal W^\#\). These adjoints exist uniquely by the
nondegeneracy of the pairings between complementary Lagrangian
subspaces.

Let \(g\in\mathcal W\) and \(h\in\mathcal W^\#\). Since
\(\mathcal V\) is Lagrangian, \(g+h\in\mathcal V\) if and only if
\(\omega_S(g+h,v)=0\) for every \(v\in\mathcal V\). Writing
\(v=A_{\mathcal V}v+C_{\mathcal V}v\), we obtain
\(
\omega_S(g+h,v)
=
\omega_S(
C_{\mathcal V}^{\dagger_{\omega_S}}g
+
A_{\mathcal V}^{\dagger_{\omega_S}}h,
v).
\)
By the nondegeneracy of
\(\omega_S|_{\mathcal V^\#\times\mathcal V}\), it follows that
\(g+h\in\mathcal V\) if and only if
\(C_{\mathcal V}^{\dagger_{\omega_S}}g
+A_{\mathcal V}^{\dagger_{\omega_S}}h=0\). Consequently,
\(
\mathcal V
=
\ker(
C_{\mathcal V}^{\dagger_{\omega_S}}P_{\mathcal W}
+
A_{\mathcal V}^{\dagger_{\omega_S}}P_{\mathcal W^\#}).
\)

For \(\lambda\in\rho(\Delta_{X,\mathcal W})\), define the linear map
\[
T_{(\mathcal V,\mathcal V^\#),(\mathcal W,\mathcal W^\#)}
:\mathcal W^\#\longrightarrow\mathcal V^\#
\]
by
\[
T_{(\mathcal V,\mathcal V^\#),(\mathcal W,\mathcal W^\#)}(\lambda)
=
C_{\mathcal V}^{\dagger_{\omega_S}}
M_{\mathcal W,\mathcal W^\#}^{X}(\lambda)
+
A_{\mathcal V}^{\dagger_{\omega_S}}.
\]
Once both Lagrangian pairs have been fixed, we abbreviate this
operator as \(T_{\mathcal V,\mathcal W}(\lambda)\).

\begin{lem}\label{lem:boundary-operator-invertible}
If
\(\lambda\in
\rho(\Delta_{X,\mathcal V})
\cap\rho(\Delta_{X,\mathcal W})\), then
\(T_{\mathcal V,\mathcal W}(\lambda)\) is a linear isomorphism from
\(\mathcal W^\#\) onto \(\mathcal V^\#\).
\end{lem}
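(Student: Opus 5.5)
Since \(\mathcal W^\#\) and \(\mathcal V^\#\) are both complementary to Lagrangian subspaces, each has dimension \(\frac12\dim_{\mathbb C}V_S\). It therefore suffices to show that \(T_{\mathcal V,\mathcal W}(\lambda)\) is injective.

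Suppose \(h\in\mathcal W^\#\) satisfies \(T_{\mathcal V,\mathcal W}(\lambda)h=0\). Set
\[
u:=\Gamma_{\mathcal W,\mathcal W^\#}^{X}(\lambda)h .
\]
This is a formal solution, \((\Delta_X-\lambda)u=0\), with boundary data
\[
\pi_S(u)=M_{\mathcal W,\mathcal W^\#}^{X}(\lambda)h+h,
\]
where \(g:=M_{\mathcal W,\mathcal W^\#}^{X}(\lambda)h\in\mathcal W\). By the computation preceding the definition of \(T\), we have \(g+h\in\mathcal V\) if and only if
\[
C_{\mathcal V}^{\dagger_{\omega_S}}g+A_{\mathcal V}^{\dagger_{\omega_S}}h=0.
\]
This left-hand side is exactly \(T_{\mathcal V,\mathcal W}(\lambda)h\). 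Hence \(\pi_S(u)\in\mathcal V\), so \(u\in D_{\mathcal V}\) and \((\Delta_{X,\mathcal V}-\lambda)u=0\). Since \(\lambda\in\rho(\Delta_{X,\mathcal V})\), we get \(u=0\).

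Then \(\pi_S(u)=0\). Projecting onto \(\mathcal W^\#\) along \(\mathcal W\) gives \(h=\widetilde{\pi}_{\mathcal W^\#}(u)=0\). So \(T_{\mathcal V,\mathcal W}(\lambda)\) is injective, and by the dimension count it is an isomorphism.

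The step needing the most care is the characterization \(g+h\in\mathcal V \iff C^\dagger g+A^\dagger h=0\). It relies on the nondegeneracy of the pairing \(\omega_S|_{\mathcal V^\#\times\mathcal V}\) and on the Lagrangian property of \(\mathcal V\); I will take both as established in the preceding discussion, with a brief re-verification of the adjoint identities \(\omega_S(g,v)=\omega_S(g,C_{\mathcal V}v)\) (using that \(\mathcal W\) is isotropic) and \(\omega_S(h,v)=\omega_S(h,A_{\mathcal V}v)\) (using that \(\mathcal W^\#\) is isotropic). These adjoints are taken in the sense of moving the map to the other argument, so I will check the sign and conjugation conventions against the defining relations.
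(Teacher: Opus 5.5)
Your proof is correct and follows the paper's argument essentially step for step. You reduce to injectivity by the dimension count, use the boundary equation \(C_{\mathcal V}^{\dagger_{\omega_S}}g+A_{\mathcal V}^{\dagger_{\omega_S}}h=0 \iff g+h\in\mathcal V\) to place \(\Gamma_{\mathcal W,\mathcal W^\#}^{X}(\lambda)h\) in \(D_{\mathcal V}\), and then conclude via \(\lambda\in\rho(\Delta_{X,\mathcal V})\) and \(h=\widetilde{\pi}_{\mathcal W^\#}(u)\). Your planned check of the adjoint conventions also works out: \(\omega_S(g,C_{\mathcal V}v)=-\overline{\omega_S(C_{\mathcal V}v,g)}=\omega_S(C_{\mathcal V}^{\dagger_{\omega_S}}g,v)\), and the same computation applies to \(A_{\mathcal V}\).
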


\begin{proof}
Since
\(\dim_{\mb C}\mathcal V^\#
=\dim_{\mb C}\mathcal W^\#\), it suffices to prove injectivity.
Let \(f\in\ker T_{\mathcal V,\mathcal W}(\lambda)\). Then
\(
C_{\mathcal V}^{\dagger_{\omega_S}}
M_{\mathcal W,\mathcal W^\#}^{X}(\lambda)f
+
A_{\mathcal V}^{\dagger_{\omega_S}}f
=0.
\)
By the boundary equation for \(\mathcal V\), this implies
\(
M_{\mathcal W,\mathcal W^\#}^{X}(\lambda)f+f
\in\mathcal V.
\)

Let
\(u_f(\lambda):=
\Gamma_{\mathcal W,\mathcal W^\#}^{X}(\lambda)f\).
Then \(u_f(\lambda)\) is the formal solution at \(\lambda\) whose
\(\mathcal W^\#\)-boundary coordinate is \(f\). By the definition of
the Weyl function,
\(
\pi_S(u_f(\lambda))
=
M_{\mathcal W,\mathcal W^\#}^{X}(\lambda)f+f.
\)
Hence \(\pi_S(u_f(\lambda))\in\mathcal V\), and therefore
\(u_f(\lambda)\in D_{\mathcal V}\). Since
\((\Delta_X-\lambda)u_f(\lambda)=0\), we have
\(
(\Delta_{X,\mathcal V}-\lambda)u_f(\lambda)=0.
\)
The assumption
\(\lambda\in\rho(\Delta_{X,\mathcal V})\) gives
\(u_f(\lambda)=0\). Consequently,
\(f=\widetilde{\pi}_{\mathcal W^\#}(u_f(\lambda))=0\).
Thus \(T_{\mathcal V,\mathcal W}(\lambda)\) is injective and hence
an isomorphism.
\end{proof}

\begin{thm}[Kreĭn's resolvent formula]
\label{thm:Krein-resolvent-formula}
Let \((\mathcal V,\mathcal V^\#)\) and
\((\mathcal W,\mathcal W^\#)\) be Lagrangian pairs. If
\(\lambda\in
\rho(\Delta_{X,\mathcal V})
\cap\rho(\Delta_{X,\mathcal W})\), then
\[
R_{\mathcal V}(\lambda)-R_{\mathcal W}(\lambda)
=
-\Gamma_{\mathcal W,\mathcal W^\#}^{X}(\lambda)
T_{\mathcal V,\mathcal W}(\lambda)^{-1}
C_{\mathcal V}^{\dagger_{\omega_S}}
\widetilde{\pi}_{\mathcal W}
R_{\mathcal W}(\lambda).
\]
\end{thm}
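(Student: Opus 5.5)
Fix \(F\in L^2(X')\) and set \(u:=R_{\mathcal V}(\lambda)F\) and \(w:=R_{\mathcal W}(\lambda)F\). The strategy is to show that \(u-w\) is a formal solution, so that it is determined by its \(\mathcal W^\#\)-boundary coordinate via Proposition~\ref{prop:Gamma-characterization}, and then to pin down that coordinate from the requirement \(\pi_S(u)\in\mathcal V\).

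\emph{Step 1.} Since both \(u\) and \(w\) lie in \(D_{\max}(\Delta_X)\) and \((\Delta_X-\lambda)u=(\Delta_X-\lambda)w=F\), the difference \(z:=u-w\) belongs to \(\ker(\Delta_X-\lambda)\). Put \(f:=\widetilde{\pi}_{\mathcal W^\#}(z)\in\mathcal W^\#\). Because \(\lambda\in\rho(\Delta_{X,\mathcal W})\), Proposition~\ref{prop:Gamma-characterization} gives
\[
z=\Gamma^{X}_{\mathcal W,\mathcal W^\#}(\lambda)f,
\qquad
\pi_S(z)=M^{X}_{\mathcal W,\mathcal W^\#}(\lambda)f+f .
\]

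\emph{Step 2.} Compute the boundary data of \(u=w+z\). Since \(w\in D_{\mathcal W}\), we have \(\pi_S(w)=\widetilde{\pi}_{\mathcal W}(w)\in\mathcal W\). Hence the two components of \(\pi_S(u)\) are
\[
P_{\mathcal W}\pi_S(u)=\widetilde{\pi}_{\mathcal W}(w)+M^{X}_{\mathcal W,\mathcal W^\#}(\lambda)f,
\qquad
P_{\mathcal W^\#}\pi_S(u)=f .
\]

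\emph{Step 3.} The condition \(\pi_S(u)\in\mathcal V\) is now rewritten with the kernel description \(\mathcal V=\ker\bigl(C_{\mathcal V}^{\dagger_{\omega_S}}P_{\mathcal W}+A_{\mathcal V}^{\dagger_{\omega_S}}P_{\mathcal W^\#}\bigr)\) established before Lemma~\ref{lem:boundary-operator-invertible}. This yields
\[
C_{\mathcal V}^{\dagger_{\omega_S}}\widetilde{\pi}_{\mathcal W}(w)
+C_{\mathcal V}^{\dagger_{\omega_S}}M^{X}_{\mathcal W,\mathcal W^\#}(\lambda)f
+A_{\mathcal V}^{\dagger_{\omega_S}}f=0,
\]
that is,
\[
T_{\mathcal V,\mathcal W}(\lambda)f=-C_{\mathcal V}^{\dagger_{\omega_S}}\widetilde{\pi}_{\mathcal W}R_{\mathcal W}(\lambda)F .
\]
By Lemma~\ref{lem:boundary-operator-invertible}, \(T_{\mathcal V,\mathcal W}(\lambda)\) is invertible, so \(f\) is obtained by applying \(T_{\mathcal V,\mathcal W}(\lambda)^{-1}\) to the right-hand side.

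\emph{Step 4.} Substituting this \(f\) into \(z=\Gamma^{X}_{\mathcal W,\mathcal W^\#}(\lambda)f\) gives \(z=(R_{\mathcal V}(\lambda)-R_{\mathcal W}(\lambda))F\) in exactly the asserted form. Since \(F\in L^2(X')\) was arbitrary, the operator identity follows.

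The one point requiring care is Step 3. We must check that the adjoint conventions in the kernel description are consistent when it is applied to the vector \(g+h\), with \(g=\widetilde{\pi}_{\mathcal W}(w)+M^{X}_{\mathcal W,\mathcal W^\#}(\lambda)f\in\mathcal W\) and \(h=f\in\mathcal W^\#\). The rest is bookkeeping. No issue of convergence or of operator domains arises, because all the boundary maps are finite-rank and are continuous in the graph norm.
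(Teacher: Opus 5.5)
Your proof is correct and follows essentially the same route as the paper. Both write \((R_{\mathcal V}(\lambda)-R_{\mathcal W}(\lambda))F\) as the formal solution \(\Gamma_{\mathcal W,\mathcal W^\#}^{X}(\lambda)f\), use the kernel description of \(\mathcal V\) to turn \(\pi_S(R_{\mathcal V}(\lambda)F)\in\mathcal V\) into \(T_{\mathcal V,\mathcal W}(\lambda)f=-C_{\mathcal V}^{\dagger_{\omega_S}}\widetilde{\pi}_{\mathcal W}R_{\mathcal W}(\lambda)F\), and invert by Lemma~\ref{lem:boundary-operator-invertible}. One harmless slip: since \(R_{\mathcal V}(\lambda)=(\lambda-\Delta_{X,\mathcal V})^{-1}\), one has \((\Delta_X-\lambda)u=(\Delta_X-\lambda)w=-F\) rather than \(F\), which does not affect \(u-w\in\ker(\Delta_X-\lambda)\).
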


\begin{proof}
Let \(F\in L^2(X')\), and set
\(v_\lambda:=R_{\mathcal V}(\lambda)F\in D_{\mathcal V}\) and
\(w_\lambda:=R_{\mathcal W}(\lambda)F\in D_{\mathcal W}\).
Set \(u_\lambda:=v_\lambda-w_\lambda\). Since
\((\lambda-\Delta_{X,\mathcal V})v_\lambda=F\) and
\((\lambda-\Delta_{X,\mathcal W})w_\lambda=F\), we have
\((\Delta_X-\lambda)u_\lambda=0\). Thus \(u_\lambda\) is a formal
solution at \(\lambda\).

Since \(\lambda\in\rho(\Delta_{X,\mathcal W})\), there exists a
unique \(f\in\mathcal W^\#\) such that
\(
u_\lambda
=
\Gamma_{\mathcal W,\mathcal W^\#}^{X}(\lambda)f.
\)
Consequently,
\(
\pi_S(u_\lambda)
=
M_{\mathcal W,\mathcal W^\#}^{X}(\lambda)f+f.
\)
Since \(v_\lambda=w_\lambda+u_\lambda\), it follows that
\[
\pi_S(v_\lambda)
=
\pi_S(w_\lambda)
+
M_{\mathcal W,\mathcal W^\#}^{X}(\lambda)f
+
f.
\]

Now \(\pi_S(w_\lambda)\in\mathcal W\), whereas
\(\pi_S(v_\lambda)\in\mathcal V\). Applying the boundary equation
for \(\mathcal V\) therefore gives
\(
T_{\mathcal V,\mathcal W}(\lambda)f
=
-C_{\mathcal V}^{\dagger_{\omega_S}}
\widetilde{\pi}_{\mathcal W}(w_\lambda).
\)
By Lemma~\ref{lem:boundary-operator-invertible},
\(T_{\mathcal V,\mathcal W}(\lambda)\) is invertible, and hence
\[
f
=
-T_{\mathcal V,\mathcal W}(\lambda)^{-1}
C_{\mathcal V}^{\dagger_{\omega_S}}
\widetilde{\pi}_{\mathcal W}(w_\lambda).
\]
Substituting this identity into
\(
u_\lambda
=
\Gamma_{\mathcal W,\mathcal W^\#}^{X}(\lambda)f
\)
and using \(w_\lambda=R_{\mathcal W}(\lambda)F\) gives the asserted
identity applied to \(F\). Since \(F\in L^2(X')\) is arbitrary, the
operator identity follows.
\end{proof}

Thus the difference between the resolvents of two self-adjoint
realizations is determined entirely by the finite-dimensional
critical asymptotic boundary data and the corresponding Weyl
function. In particular,
\(R_{\mathcal V}(\lambda)-R_{\mathcal W}(\lambda)\) is a
finite-rank operator of rank at most
\(\dim_{\mb C}\mathcal W^\#
=\frac12\dim_{\mb C}V_S\), and is therefore trace class.

\begin{lem}\label{lem:boundary-resolvent-M-derivative}
Let \(\lambda\in\rho(\Delta_{X,\mathcal W})\). Then
\[
\widetilde{\pi}_{\mathcal W}
R_{\mathcal W}(\lambda)
\Gamma_{\mathcal W,\mathcal W^\#}^{X}(\lambda)
=
-\frac{d}{d\lambda}
M_{\mathcal W,\mathcal W^\#}^{X}(\lambda)
\]
as linear maps from \(\mathcal W^\#\) to \(\mathcal W\).
\end{lem}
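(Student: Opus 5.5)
The plan is to differentiate the Poisson operator in \(\lambda\) and recognize the derivative as \(-R_{\mathcal W}(\lambda)\) applied to the Poisson operator itself. Fix \(f\in\mathcal W^\#\) and choose \(w_f\in D_{\max}(\Delta_X)\) with \(\widetilde{\pi}_{\mathcal W^\#}(w_f)=f\). As in the proof of Lemma~\ref{lem:M-derivative-identity}, we have
\[
u_f(\lambda):=\Gamma_{\mathcal W,\mathcal W^\#}^{X}(\lambda)f
=w_f+R_{\mathcal W}(\lambda)(\Delta_X-\lambda)w_f .
\]
This is a holomorphic \(D_{\max}(\Delta_X)\)-valued function, since \(R_{\mathcal W}(\lambda)\) is holomorphic with values in \(D_{\mathcal W}\) equipped with the graph norm.

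First, I will compute \(u_f'(\lambda)\) explicitly. Using the resolvent identity \(\frac{d}{d\lambda}R_{\mathcal W}(\lambda)=-R_{\mathcal W}(\lambda)^2\), which holds for \(R_{\mathcal W}(\lambda)=(\lambda-\Delta_{X,\mathcal W})^{-1}\), the product rule gives
\[
u_f'(\lambda)
=-R_{\mathcal W}(\lambda)^2(\Delta_X-\lambda)w_f-R_{\mathcal W}(\lambda)w_f
=-R_{\mathcal W}(\lambda)\bigl(w_f+R_{\mathcal W}(\lambda)(\Delta_X-\lambda)w_f\bigr).
\]
Hence
\[
u_f'(\lambda)=-R_{\mathcal W}(\lambda)\Gamma_{\mathcal W,\mathcal W^\#}^{X}(\lambda)f .
\]
This identity is the heart of the argument. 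It also shows that \(u_f'(\lambda)\in D_{\mathcal W}\), which is consistent with differentiating the constant boundary coordinate \(\widetilde{\pi}_{\mathcal W^\#}(u_f(\lambda))=f\).

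Second, I will apply the boundary maps. Since \(\pi_S\) is continuous in the graph norm, it commutes with differentiation, so
\[
\pi_S(u_f'(\lambda))=\frac{d}{d\lambda}\bigl(M_{\mathcal W,\mathcal W^\#}^{X}(\lambda)f+f\bigr)=\frac{d}{d\lambda}M_{\mathcal W,\mathcal W^\#}^{X}(\lambda)f\in\mathcal W .
\]
Applying \(P_{\mathcal W}\) then gives
\[
\widetilde{\pi}_{\mathcal W}(u_f'(\lambda))=\frac{d}{d\lambda}M_{\mathcal W,\mathcal W^\#}^{X}(\lambda)f .
\]
Combining this with the first step yields
\[
\widetilde{\pi}_{\mathcal W}R_{\mathcal W}(\lambda)\Gamma_{\mathcal W,\mathcal W^\#}^{X}(\lambda)f
=-\frac{d}{d\lambda}M_{\mathcal W,\mathcal W^\#}^{X}(\lambda)f .
\]
Since \(f\in\mathcal W^\#\) is arbitrary, this proves the statement.

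The only delicate point is justifying the differentiation in the graph norm, rather than merely in \(L^2(X')\). I expect to handle it by noting that \(R_{\mathcal W}(\lambda)\) is holomorphic as a map from \(L^2(X')\) into \(D_{\mathcal W}\) with the graph norm, because \(\Delta_X R_{\mathcal W}(\lambda)=\lambda R_{\mathcal W}(\lambda)-I\). The resolvent identity then holds in that operator norm, and continuity of \(\pi_S\) from Theorem~\ref{thm:quotient-general} (it is finite-rank with closed kernel \(D_{\min}(\Delta_X)\)) lets the boundary maps pass through the derivative.
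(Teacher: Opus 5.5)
Your proposal is correct and is essentially the paper's argument: both establish \(u_f'(\lambda)=-R_{\mathcal W}(\lambda)\Gamma_{\mathcal W,\mathcal W^\#}^{X}(\lambda)f\) and then apply \(\widetilde{\pi}_{\mathcal W}\), using \(\pi_S(u_f'(\lambda))=\frac{d}{d\lambda}M_{\mathcal W,\mathcal W^\#}^{X}(\lambda)f\in\mathcal W\). The only difference is how you get that identity. You differentiate the explicit formula \(w_f+R_{\mathcal W}(\lambda)(\Delta_X-\lambda)w_f\) using \(R_{\mathcal W}'=-R_{\mathcal W}^2\). The paper instead differentiates \((\Delta_X-\lambda)u_f=0\), notes \(u_f'(\lambda)\in D_{\mathcal W}\) with \((\lambda-\Delta_{X,\mathcal W})u_f'(\lambda)=-u_f(\lambda)\), and inverts.
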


\begin{proof}
Let \(f\in\mathcal W^\#\), and set
\(u_f(\lambda):=
\Gamma_{\mathcal W,\mathcal W^\#}^{X}(\lambda)f\).
Then
\((\Delta_X-\lambda)u_f(\lambda)=0\) and
\(
\pi_S(u_f(\lambda))
=
M_{\mathcal W,\mathcal W^\#}^{X}(\lambda)f+f.
\)

Differentiating with respect to \(\lambda\), we obtain
\[
    (\Delta_X-\lambda)u_f'(\lambda)=u_f(\lambda)
\]
and
\[
\pi_S(u_f'(\lambda))
=
\frac{d}{d\lambda}
M_{\mathcal W,\mathcal W^\#}^{X}(\lambda)f.
\]
In particular, the boundary data of \(u_f'(\lambda)\) belong to
\(\mathcal W\), so \(u_f'(\lambda)\in D_{\mathcal W}\).

Since
\(
(\lambda-\Delta_{X,\mathcal W})u_f'(\lambda)
=
-u_f(\lambda),
\)
it follows that
\(
u_f'(\lambda)
=
-R_{\mathcal W}(\lambda)u_f(\lambda).
\)
Applying \(\widetilde{\pi}_{\mathcal W}\) gives
\[
\frac{d}{d\lambda}
M_{\mathcal W,\mathcal W^\#}^{X}(\lambda)f
=
-\widetilde{\pi}_{\mathcal W}
R_{\mathcal W}(\lambda)
\Gamma_{\mathcal W,\mathcal W^\#}^{X}(\lambda)f.
\]
Since this holds for every \(f\in\mathcal W^\#\), the result follows.
\end{proof}

Choose fixed bases of \(\mathcal W^\#\) and \(\mathcal V^\#\), and
write \(\det T_{\mathcal V,\mathcal W}(\lambda)\) for the
determinant of the corresponding matrix. Although this determinant
depends on the chosen bases, its logarithmic derivative does not.

\begin{cor}\label{cor:resolvent-trace-identity}
If
\(\lambda\in
\rho(\Delta_{X,\mathcal V})
\cap\rho(\Delta_{X,\mathcal W})\), then
\[
\begin{aligned}
\operatorname{Tr}_{L^2(X')}
\bigl(R_{\mathcal V}(\lambda)-R_{\mathcal W}(\lambda)\bigr)
&=
\operatorname{Tr}_{\mathcal W^\#}
\bigl(
T_{\mathcal V,\mathcal W}(\lambda)^{-1}
T_{\mathcal V,\mathcal W}'(\lambda)
\bigr) \\
&=
\frac{d}{d\lambda}
\log\det T_{\mathcal V,\mathcal W}(\lambda).
\end{aligned}
\]
\end{cor}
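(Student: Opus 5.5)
The plan is to take the trace of Kreĭn's formula (Theorem~\ref{thm:Krein-resolvent-formula}) and move it onto the finite-dimensional space \(\mathcal W^\#\) by cyclicity. Every operator involved either is finite rank or maps through a finite-dimensional space, so cyclicity is justified. Write
\[
R_{\mathcal V}(\lambda)-R_{\mathcal W}(\lambda)
=
-\Gamma_{\mathcal W,\mathcal W^\#}^{X}(\lambda)\,
B(\lambda),
\qquad
B(\lambda):=T_{\mathcal V,\mathcal W}(\lambda)^{-1}
C_{\mathcal V}^{\dagger_{\omega_S}}
\widetilde{\pi}_{\mathcal W}
R_{\mathcal W}(\lambda).
\]
Here \(B(\lambda):L^2(X')\to\mathcal W^\#\) is bounded, because \(R_{\mathcal W}(\lambda)\) maps \(L^2\) boundedly into \(D_{\mathcal W}\) with the graph norm and \(\widetilde\pi_{\mathcal W}\) is graph-norm continuous. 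The map \(\Gamma_{\mathcal W,\mathcal W^\#}^{X}(\lambda):\mathcal W^\#\to L^2(X')\) has finite-dimensional domain. For such a factorization \(\operatorname{Tr}_{L^2}(\Gamma B)=\operatorname{Tr}_{\mathcal W^\#}(B\Gamma)\). Hence
\[
\operatorname{Tr}_{L^2(X')}\bigl(R_{\mathcal V}(\lambda)-R_{\mathcal W}(\lambda)\bigr)
=
-\operatorname{Tr}_{\mathcal W^\#}\Bigl(
T_{\mathcal V,\mathcal W}(\lambda)^{-1}
C_{\mathcal V}^{\dagger_{\omega_S}}\,
\widetilde{\pi}_{\mathcal W}R_{\mathcal W}(\lambda)
\Gamma_{\mathcal W,\mathcal W^\#}^{X}(\lambda)
\Bigr).
\]

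Next, Lemma~\ref{lem:boundary-resolvent-M-derivative} lets us replace \(\widetilde{\pi}_{\mathcal W}R_{\mathcal W}(\lambda)\Gamma_{\mathcal W,\mathcal W^\#}^{X}(\lambda)\) by \(-\frac{d}{d\lambda}M_{\mathcal W,\mathcal W^\#}^{X}(\lambda)\). This turns the right-hand side into
\[
\operatorname{Tr}_{\mathcal W^\#}\Bigl(T_{\mathcal V,\mathcal W}(\lambda)^{-1}
C_{\mathcal V}^{\dagger_{\omega_S}}\tfrac{d}{d\lambda}M_{\mathcal W,\mathcal W^\#}^{X}(\lambda)\Bigr).
\]
By definition, \(T_{\mathcal V,\mathcal W}(\lambda)=C_{\mathcal V}^{\dagger_{\omega_S}}M_{\mathcal W,\mathcal W^\#}^{X}(\lambda)+A_{\mathcal V}^{\dagger_{\omega_S}}\). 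Since \(C_{\mathcal V}^{\dagger_{\omega_S}}\) and \(A_{\mathcal V}^{\dagger_{\omega_S}}\) are independent of \(\lambda\), and \(M\) is holomorphic by Lemma~\ref{lem:M-derivative-identity}, we get \(T'_{\mathcal V,\mathcal W}(\lambda)=C_{\mathcal V}^{\dagger_{\omega_S}}\frac{d}{d\lambda}M_{\mathcal W,\mathcal W^\#}^{X}(\lambda)\). This gives the first equality.

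For the second equality, fix bases of \(\mathcal W^\#\) and \(\mathcal V^\#\). Then \(T_{\mathcal V,\mathcal W}(\lambda)\) becomes a holomorphic square matrix, invertible on \(\rho(\Delta_{X,\mathcal V})\cap\rho(\Delta_{X,\mathcal W})\) by Lemma~\ref{lem:boundary-operator-invertible}. Jacobi's formula gives \(\frac{d}{d\lambda}\log\det T=\operatorname{tr}(T^{-1}T')\). This trace is basis independent, since changing bases conjugates \(T^{-1}T'\) by a constant matrix. The logarithmic derivative is well defined locally because \(\det T\neq 0\) there.

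The only step requiring care is the cyclicity of the trace. I would check it directly: choose a basis \(e_1,\dots,e_m\) of \(\mathcal W^\#\) with dual functionals \(e_k^*\). Then \(\Gamma B=\sum_k (\Gamma e_k)\otimes (e_k^*\circ B)\) is an explicit finite-rank operator on \(L^2(X')\), whose trace is \(\sum_k e_k^*(B\Gamma e_k)=\operatorname{Tr}_{\mathcal W^\#}(B\Gamma)\). The boundedness of \(e_k^*\circ B\) on \(L^2(X')\), which comes from the graph-norm continuity noted above, is what makes each rank-one term a legitimate bounded operator.
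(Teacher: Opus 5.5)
Your proposal is correct and follows the paper's proof essentially step for step: you trace Kre\u{\i}n's formula, move the trace to \(\mathcal W^\#\) by cyclicity, substitute Lemma~\ref{lem:boundary-resolvent-M-derivative} to obtain \(T_{\mathcal V,\mathcal W}(\lambda)^{-1}T_{\mathcal V,\mathcal W}'(\lambda)\), and finish with Jacobi's formula. Your explicit rank-one check of cyclicity, which uses the graph-norm continuity of \(\widetilde{\pi}_{\mathcal W}\), fills in the detail that the paper only cites as cyclicity for finite-rank factorizations.
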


\begin{proof}
The resolvent difference is finite rank, so its trace is
well-defined. By Kreĭn's resolvent formula and the cyclicity of the
trace for finite-rank factorizations,
\[
\begin{aligned}
\operatorname{Tr}_{L^2(X')}
\bigl(R_{\mathcal V}(\lambda)-R_{\mathcal W}(\lambda)\bigr)
&=
-\operatorname{Tr}_{\mathcal W^\#}\bigl(
T_{\mathcal V,\mathcal W}(\lambda)^{-1}
C_{\mathcal V}^{\dagger_{\omega_S}}
\widetilde{\pi}_{\mathcal W}
R_{\mathcal W}(\lambda)
\Gamma_{\mathcal W,\mathcal W^\#}^{X}(\lambda)
\bigr) \\
&=
\operatorname{Tr}_{\mathcal W^\#}\bigl(
T_{\mathcal V,\mathcal W}(\lambda)^{-1}
C_{\mathcal V}^{\dagger_{\omega_S}}
\frac{d}{d\lambda}
M_{\mathcal W,\mathcal W^\#}^{X}(\lambda)
\bigr),
\end{aligned}
\]
where Lemma~\ref{lem:boundary-resolvent-M-derivative} was used in
the second equality. Since
\(
T_{\mathcal V,\mathcal W}'(\lambda)
=
C_{\mathcal V}^{\dagger_{\omega_S}}
\frac{d}{d\lambda}
M_{\mathcal W,\mathcal W^\#}^{X}(\lambda),
\)
this proves the first identity.

The map \(T_{\mathcal V,\mathcal W}(\lambda)\) is holomorphic on
\(\rho(\Delta_{X,\mathcal W})\) and invertible on
\(\rho(\Delta_{X,\mathcal V})
\cap\rho(\Delta_{X,\mathcal W})\).
Jacobi's formula therefore gives the second identity.
\end{proof}
Fix bases of \(\mathcal W^\#\) and \(\mathcal V^\#\). Define
\(d_{(\mathcal V,\mathcal V^\#),(\mathcal W,\mathcal W^\#)}^{X}
(\lambda)\) to be the determinant of the matrix representing
\(T_{(\mathcal V,\mathcal V^\#),(\mathcal W,\mathcal W^\#)}
(\lambda)\) with respect to these bases. Once the two Lagrangian
pairs and the bases have been fixed, we abbreviate this function as
\(
d_{\mathcal V,\mathcal W}^{X}(\lambda)
:=
\det T_{\mathcal V,\mathcal W}(\lambda).
\)

Changing either basis multiplies
\(d_{\mathcal V,\mathcal W}^{X}\) by a nonzero constant independent
of \(\lambda\). Hence its logarithmic derivative and divisor are
independent of the chosen bases.

\begin{prop}\label{prop:order-of-determinant-function}
The function \(d_{\mathcal V,\mathcal W}^{X}\) extends
meromorphically to \(\mb C\). If \(\lambda_0\) is an eigenvalue of
either \(\Delta_{X,\mathcal V}\) or
\(\Delta_{X,\mathcal W}\), then
\[
\operatorname{ord}_{\lambda_0}
d_{\mathcal V,\mathcal W}^{X}
=
\dim_{\mb C}\ker(\Delta_{X,\mathcal V}-\lambda_0)
-
\dim_{\mb C}\ker(\Delta_{X,\mathcal W}-\lambda_0).
\]
\end{prop}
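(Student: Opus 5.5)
The plan is to combine the meromorphy of the Weyl function (Corollary~\ref{cor:weyl-function-simple-poles}) with the resolvent trace identity (Corollary~\ref{cor:resolvent-trace-identity}), reading off the order of \(d_{\mathcal V,\mathcal W}^{X}\) at \(\lambda_0\) as the residue of its logarithmic derivative.

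\textbf{Meromorphic extension.} By definition,
\(T_{\mathcal V,\mathcal W}(\lambda)=C_{\mathcal V}^{\dagger_{\omega_S}}M_{\mathcal W,\mathcal W^\#}^{X}(\lambda)+A_{\mathcal V}^{\dagger_{\omega_S}}\).
Here \(C_{\mathcal V}^{\dagger_{\omega_S}}\) and \(A_{\mathcal V}^{\dagger_{\omega_S}}\) are constant. By Corollary~\ref{cor:weyl-function-simple-poles}, \(M_{\mathcal W,\mathcal W^\#}^{X}\) is holomorphic on \(\rho(\Delta_{X,\mathcal W})\) and extends meromorphically across the discrete spectrum of \(\Delta_{X,\mathcal W}\), with at most simple poles.

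Hence every matrix entry of \(T_{\mathcal V,\mathcal W}\), in the fixed bases, is meromorphic on \(\mb C\), and so is its determinant \(d_{\mathcal V,\mathcal W}^{X}\). By Lemma~\ref{lem:boundary-operator-invertible}, \(d_{\mathcal V,\mathcal W}^{X}\) is holomorphic and nonvanishing on \(\rho(\Delta_{X,\mathcal V})\cap\rho(\Delta_{X,\mathcal W})\). This set is the complement of a discrete set, by Theorem~\ref{thm:compact-resolvent}. Therefore \(d_{\mathcal V,\mathcal W}^{X}\not\equiv0\), its order is a well-defined integer at every point, and zeros or poles can occur only at eigenvalues of \(\Delta_{X,\mathcal V}\) or \(\Delta_{X,\mathcal W}\).

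\textbf{Residue computation.} Fix such a \(\lambda_0\). Since it is isolated, choose a punctured disk \(0<|\lambda-\lambda_0|<\varepsilon\) contained in both resolvent sets. On this disk, Corollary~\ref{cor:resolvent-trace-identity} gives
\[
\frac{d}{d\lambda}\log d_{\mathcal V,\mathcal W}^{X}(\lambda)=\operatorname{Tr}_{L^2(X')}\bigl(R_{\mathcal V}(\lambda)-R_{\mathcal W}(\lambda)\bigr).
\]
Let \(\Pi_{\mathcal V}\) and \(\Pi_{\mathcal W}\) be the orthogonal spectral projections onto \(\ker(\Delta_{X,\mathcal V}-\lambda_0)\) and \(\ker(\Delta_{X,\mathcal W}-\lambda_0)\). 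These have finite ranks \(m_{\mathcal V}\) and \(m_{\mathcal W}\), possibly zero. From the eigenfunction expansions,
\(R_{\mathcal V}(\lambda)=\Pi_{\mathcal V}/(\lambda-\lambda_0)+H_{\mathcal V}(\lambda)\), with \(H_{\mathcal V}\) holomorphic near \(\lambda_0\) in operator norm, and similarly for \(\mathcal W\). Then
\[
R_{\mathcal V}(\lambda)-R_{\mathcal W}(\lambda)=\frac{\Pi_{\mathcal V}-\Pi_{\mathcal W}}{\lambda-\lambda_0}+G(\lambda),
\qquad G:=H_{\mathcal V}-H_{\mathcal W}.
\]
Once we know \(\operatorname{Tr}G\) is holomorphic at \(\lambda_0\), the logarithmic derivative has residue \(\operatorname{Tr}(\Pi_{\mathcal V}-\Pi_{\mathcal W})=m_{\mathcal V}-m_{\mathcal W}\). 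For a meromorphic function, the residue of \(d'/d\) equals the order, which gives the formula.

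\textbf{Main obstacle: holomorphy of \(\operatorname{Tr}G\).} The regular parts \(H_{\mathcal V}\) and \(H_{\mathcal W}\) are not individually trace class, so this needs an argument. By the rank bound after Theorem~\ref{thm:Krein-resolvent-formula},
\(G(\lambda)=(R_{\mathcal V}-R_{\mathcal W})(\lambda)-(\Pi_{\mathcal V}-\Pi_{\mathcal W})/(\lambda-\lambda_0)\)
has rank at most \(k:=\tfrac12\dim V_S+m_{\mathcal V}+m_{\mathcal W}\), uniformly on the punctured disk. Moreover, \(G\) extends holomorphically across \(\lambda_0\) in operator norm. Because \(G\) is a norm limit of operators of rank \(\le k\), its value at \(\lambda_0\) also has rank \(\le k\).

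For operators of rank \(\le k\) one has \(\|A\|_{1}\le k\|A\|\). Hence the difference quotients of \(G\) are holomorphic in trace norm: the quotients and their limit have rank \(\le 2k\), so convergence in norm gives convergence in trace norm. Since \(\operatorname{Tr}\) is continuous for the trace norm, \(\operatorname{Tr}G\) is holomorphic near \(\lambda_0\). An alternative route is to expand \(\det T_{\mathcal V,\mathcal W}\) directly using the residue \(B_{\lambda_0}\) from Corollary~\ref{cor:weyl-function-simple-poles}, but the trace argument above is cleaner.
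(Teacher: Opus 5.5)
Your proposal is correct and follows the same route as the paper. It obtains meromorphy of \(d_{\mathcal V,\mathcal W}^{X}\) from that of \(T_{\mathcal V,\mathcal W}\), invokes the trace identity of Corollary~\ref{cor:resolvent-trace-identity}, subtracts the Laurent expansions of the two resolvents at \(\lambda_0\), and reads off the order as the residue \(\operatorname{Tr}(\Pi_{\mathcal V}-\Pi_{\mathcal W})\); the paper does exactly this, but simply asserts that the regular part is a holomorphic finite-rank family, whereas your uniform rank bound together with \(\|A\|_1\le k\|A\|\) actually justifies that its trace is holomorphic at \(\lambda_0\).
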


\begin{proof}
Since \(T_{\mathcal V,\mathcal W}\) is meromorphic,
\(
d_{\mathcal V,\mathcal W}^{X}
=
\det T_{\mathcal V,\mathcal W}
\)
extends meromorphically to \(\mb C\). Moreover, for
\(\lambda\in
\rho(\Delta_{X,\mathcal V})
\cap\rho(\Delta_{X,\mathcal W})\),
Corollary~\ref{cor:resolvent-trace-identity} gives
\[
\operatorname{Tr}_{L^2(X')}
\bigl(
R_{\mathcal V}(\lambda)-R_{\mathcal W}(\lambda)
\bigr)
=
\frac{d}{d\lambda}
\log d_{\mathcal V,\mathcal W}^{X}(\lambda).
\]

Fix \(\lambda_0\), and denote by
\(\Pi_{\mathcal V,\lambda_0}\) and
\(\Pi_{\mathcal W,\lambda_0}\) the orthogonal projections onto
\(\ker(\Delta_{X,\mathcal V}-\lambda_0)\) and
\(\ker(\Delta_{X,\mathcal W}-\lambda_0)\), respectively. If
\(\lambda_0\) is not an eigenvalue of one of the operators, take the
corresponding projection to be zero. Since
\(R_{\mathcal V}(\lambda)
=(\lambda-\Delta_{X,\mathcal V})^{-1}\), subtracting the two
Laurent expansions gives
\[
R_{\mathcal V}(\lambda)-R_{\mathcal W}(\lambda)
=
\frac{
\Pi_{\mathcal V,\lambda_0}
-
\Pi_{\mathcal W,\lambda_0}
}{
\lambda-\lambda_0
}
+
H(\lambda),
\]
where \(H\) is a holomorphic finite-rank operator-valued function
near \(\lambda_0\).

Taking the trace and comparing the residue at \(\lambda_0\) with
that of
\(
(d_{\mathcal V,\mathcal W}^{X})'/
d_{\mathcal V,\mathcal W}^{X}
\)
gives
\[
\operatorname{ord}_{\lambda_0}
d_{\mathcal V,\mathcal W}^{X}
=
\dim_{\mb C}\ker(\Delta_{X,\mathcal V}-\lambda_0)
-
\dim_{\mb C}\ker(\Delta_{X,\mathcal W}-\lambda_0).
\]
\end{proof}

\section{A Comparison Formula for Zeta Determinants}

We compare the positive-spectrum zeta determinants of self-adjoint
realizations. Since these realizations need not be nonnegative, all
nonpositive eigenvalues are omitted, producing a finite-dimensional
correction factor. A relative logarithmic derivative formula,
combined with Kreĭn's trace identity, gives the comparison in terms of
\(d_{\mathcal V,\mathcal W}^{X}(\mu)\) and
\(E_{\mathcal V,\mathcal W}^{X}(\mu)\), subject to the following
standing analytic hypothesis.

\begin{ass}[Zeta-regularity hypothesis]\label{ass:zeta-regularity}
For every pair of self-adjoint realizations to which the comparison
formula is applied, the corresponding positive-spectrum zeta functions admit
meromorphic continuations to a neighborhood of \(s=0\), are regular
there, and depend differentiably on the spectral shift. We also
assume that the relative contour representation has sufficient
locally uniform trace-norm decay to justify differentiation with
respect to the shift, integration by parts on truncated contours, and
passage to the limit.
\end{ass}

\begin{rem}
The individual zeta-regularity requirement in
Assumption~\ref{ass:zeta-regularity} holds for the Friedrichs
realization in the standard conic heat-kernel framework; see, for
example, \cite{LP}. It is not automatic for a general self-adjoint
realization. General
Lagrangian boundary conditions may produce nonstandard poles or
logarithmic singularities of the spectral zeta function, possibly at
\(s=0\); see \cite{KirstenLoyaParkExotic}. Accordingly, the ordinary
zeta-determinant comparison below is stated only for pairs of
realizations satisfying Assumption~\ref{ass:zeta-regularity}. A
treatment of arbitrary self-adjoint realizations would in general
require a suitably modified determinant. Thus
Assumption~\ref{ass:zeta-regularity} is an independent analytic input,
not a consequence of the finite-dimensional extension theory.
\end{rem}

Let \(A:D(A)\subset\mathcal H\to\mathcal H\) be a self-adjoint
operator on a separable infinite-dimensional Hilbert space
\(\mathcal H\). Assume that \(A\) is bounded from below and has
compact resolvent. Its spectrum is then discrete and consists of
real eigenvalues of finite multiplicity. We enumerate them, with
multiplicity, as
\(\lambda_1\leq\lambda_2\leq\cdots\), where
\(\lambda_k\to+\infty\).
Assume in addition that
\(\sum_{\lambda_k>0}\lambda_k^{-s}\) converges for
\(\operatorname{Re}s\) sufficiently large. The positive-spectrum
zeta function of \(A\) is then defined in this right half-plane by
\[
\zeta_A(s):=
\sum_{\lambda_k>0}\lambda_k^{-s}.
\]
Thus all nonpositive eigenvalues are omitted.
Let \(P_A^+:\mathcal H\to\mathcal H\) be the spectral projection
onto the positive spectral subspace of \(A\). For
\(\operatorname{Re}s\) sufficiently large, the Mellin transform
gives
\[
\zeta_A(s)
=
\frac{1}{\Gamma(s)}
\int_0^\infty
t^{s-1}
\operatorname{Tr}_{\mathcal H}
\bigl(e^{-tA}P_A^+\bigr)\,dt.
\]
If \(A\) is nonnegative, then
\(P_A^+=I-P_{\ker A}\), where
\(P_{\ker A}:\mathcal H\to\mathcal H\) is the orthogonal projection
onto \(\ker A\). In this case,
\[
\zeta_A(s)
=
\frac{1}{\Gamma(s)}
\int_0^\infty
t^{s-1}
\operatorname{Tr}_{\mathcal H}
\bigl(e^{-tA}-P_{\ker A}\bigr)\,dt.
\]
Suppose that \(\zeta_A\) admits a meromorphic continuation to a
neighborhood of \(s=0\) and is regular there. The positive-spectrum
zeta determinant of \(A\) is defined by
\[
\operatorname{Det}_{\zeta}(A)
:=
\exp\bigl(-\zeta_A'(0)\bigr).
\]
Throughout this section,
\(\operatorname{Det}_{\zeta}(A)\) always refers to this
positive-spectrum convention.

Now let \(A:D(A)\subset\mathcal H\to\mathcal H\) and
\(B:D(B)\subset\mathcal H\to\mathcal H\) be self-adjoint operators
satisfying the preceding assumptions. Define their relative zeta
function by
\(\zeta_{(A,B)}(s):=\zeta_A(s)-\zeta_B(s)\), and define their
relative zeta determinant by
\(\operatorname{Det}_{\zeta}(A,B)
:=\exp(-\zeta_{(A,B)}'(0))\).
It follows immediately that
\(\operatorname{Det}_{\zeta}(A,B)
=\operatorname{Det}_{\zeta}(A)/
\operatorname{Det}_{\zeta}(B)\).

Write \(R_A(z):=(z-A)^{-1}\) and \(R_B(z):=(z-B)^{-1}\).

Let \(\gamma_{A,B}^+\) be an admissible positively oriented spectral
contour surrounding the positive spectra of \(A\) and \(B\), while
excluding their nonpositive spectra. Since the positive spectra are
unbounded, this contour is understood as an unbounded spectral
contour, or equivalently as the limit of suitable truncated
contours. We choose the branch of \(z^{-s}\) on
\(\mathbb C\setminus(-\infty,0]\).

Assume that \(R_A(z)-R_B(z)\) is trace class along the contour and
has sufficient decay at infinity for the following integral to
converge when \(\operatorname{Re}s\) is sufficiently large. Then
\[
\zeta_{(A,B)}(s)
=
\frac{1}{2\pi i}
\int_{\gamma_{A,B}^+}
z^{-s}
\operatorname{Tr}_{\mathcal H}
\bigl(R_A(z)-R_B(z)\bigr)\,dz.
\]

Suppose further that the meromorphic continuation of this contour
integral is regular at \(s=0\). By definition,
\[
\log\operatorname{Det}_{\zeta}(A,B)
=
-\left.
\frac{d}{ds}
\right|_{s=0}
\left[
\frac{1}{2\pi i}
\int_{\gamma_{A,B}^+}
z^{-s}
\operatorname{Tr}_{\mathcal H}
\bigl(R_A(z)-R_B(z)\bigr)\,dz
\right]^{\mathrm{mer}}.
\]
If differentiation under the continued integral is justified at
\(s=0\), this becomes
\[
\log\operatorname{Det}_{\zeta}(A,B)
=
\frac{1}{2\pi i}
\int_{\gamma_{A,B}^+}
\log z\,
\operatorname{Tr}_{\mathcal H}
\bigl(R_A(z)-R_B(z)\bigr)\,dz.
\]
For a real parameter
\(\mu\in\rho(A)\cap\rho(B)\), consider the shifted pair
\((A-\mu,B-\mu)\). Under the same assumptions,
\[
\begin{aligned}
\log\operatorname{Det}_{\zeta}(A-\mu,B-\mu)
&=
\frac{1}{2\pi i}
\int_{\gamma_\mu^+}
\log z\,
\operatorname{Tr}_{\mathcal H}\bigl(
(z-A+\mu)^{-1}-(z-B+\mu)^{-1}
\bigr)\,dz,
\end{aligned}
\]
where \(\gamma_\mu^+\) surrounds the positive spectra of
\(A-\mu\) and \(B-\mu\) and excludes their nonpositive spectra.

After shrinking the interval of \(\mu\), if necessary, we may
assume that no eigenvalue of \(A-\mu\) or \(B-\mu\) crosses zero
and choose the admissible contour \(\gamma^+\) independently of
\(\mu\). Set
\(Q_A(z,\mu):=(z-A+\mu)^{-1}\) and
\(Q_B(z,\mu):=(z-B+\mu)^{-1}\).
Then
\(\partial_\mu Q_A(z,\mu)
=\partial_zQ_A(z,\mu)=-Q_A(z,\mu)^2\), and similarly for \(B\).

Differentiate first on the truncated contours and integrate by parts
there. By the assumed trace-norm decay of
\(Q_A(z,\mu)-Q_B(z,\mu)\), the boundary terms at the truncation
points tend to zero, locally uniformly for \(\mu\) in the interval
under consideration. Passing to the limit gives
\[
\begin{aligned}
\frac{d}{d\mu}
\log\operatorname{Det}_{\zeta}(A-\mu,B-\mu)
&=
-\frac{1}{2\pi i}
\int_{\gamma^+}
\frac{1}{z}
\operatorname{Tr}_{\mathcal H}\bigl(
Q_A(z,\mu)-Q_B(z,\mu)
\bigr)\,dz.
\end{aligned}
\]

Let \(P_{A-\mu}^+\) and \(P_{B-\mu}^+\) be the spectral projections
onto the positive spectral subspaces of \(A-\mu\) and \(B-\mu\),
respectively. Applying the holomorphic functional calculus to the
relative resolvent on truncated contours and then passing to the limit
under the standing trace-class and decay assumptions gives
\[
\begin{aligned}
&\frac{1}{2\pi i}
\int_{\gamma^+}
\frac{1}{z}
\operatorname{Tr}_{\mathcal H}\bigl(
Q_A(z,\mu)-Q_B(z,\mu)
\bigr)\,dz \\
&\qquad=
\operatorname{Tr}_{\mathcal H}\left(
(A-\mu)^{-1}P_{A-\mu}^+
-
(B-\mu)^{-1}P_{B-\mu}^+
\right).
\end{aligned}
\]
Here only the relative difference on the right is asserted to be trace
class; its two terms need not be trace class separately. Consequently,
\[
\begin{aligned}
\frac{d}{d\mu}
\log\operatorname{Det}_{\zeta}(A-\mu,B-\mu)
&=
-\operatorname{Tr}_{\mathcal H}\bigl(
(A-\mu)^{-1}P_{A-\mu}^+
-
(B-\mu)^{-1}P_{B-\mu}^+
\bigr).
\end{aligned}
\]
We denote the trace on the right-hand side by
\[
\operatorname{Tr}^+\bigl(
(A-\mu)^{-1}-(B-\mu)^{-1}
\bigr)
:=
\operatorname{Tr}_{\mathcal H}\bigl(
(A-\mu)^{-1}P_{A-\mu}^+
-
(B-\mu)^{-1}P_{B-\mu}^+
\bigr).
\]
Thus
\[
\frac{d}{d\mu}
\log\operatorname{Det}_{\zeta}(A-\mu,B-\mu)
=
-\operatorname{Tr}^+\bigl(
(A-\mu)^{-1}-(B-\mu)^{-1}
\bigr).
\]
Let \(P_{A-\mu}^-\) and \(P_{B-\mu}^-\) denote the spectral
projections onto the negative spectral subspaces of \(A-\mu\) and
\(B-\mu\), respectively. Define
\(\operatorname{Tr}^-\bigl((A-\mu)^{-1}-(B-\mu)^{-1}\bigr)\) by
\[
\operatorname{Tr}_{\mathcal H}\bigl(
(A-\mu)^{-1}P_{A-\mu}^-
-
(B-\mu)^{-1}P_{B-\mu}^-
\bigr).
\]
These operators have finite rank because \(A\) and \(B\) are
bounded from below. Since \(\mu\in\rho(A)\cap\rho(B)\), zero is not
an eigenvalue of either shifted operator. Hence
\[
\begin{aligned}
\operatorname{Tr}^+\bigl(
(A-\mu)^{-1}-(B-\mu)^{-1}
\bigr)
&=
\operatorname{Tr}_{\mathcal H}\bigl(
(A-\mu)^{-1}-(B-\mu)^{-1}
\bigr) \\
&\quad-
\operatorname{Tr}^-\bigl(
(A-\mu)^{-1}-(B-\mu)^{-1}
\bigr).
\end{aligned}
\]

Let \(I\subset\mathbb R\cap\rho(A)\cap\rho(B)\) be an open interval.
For \(\mu\in I\), define
\[
E_{A,B}(\mu)
:=
\frac{
\displaystyle
\prod_{\mu-\lambda_\ell^A>0}
(\mu-\lambda_\ell^A)^{m(\lambda_\ell^A)}
}{
\displaystyle
\prod_{\mu-\lambda_\ell^B>0}
(\mu-\lambda_\ell^B)^{m(\lambda_\ell^B)}
}.
\]
The products are taken over the distinct eigenvalues satisfying the
indicated inequalities. They are finite, and an empty product is
understood to be \(1\). Since every factor is positive,
\(E_{A,B}(\mu)>0\).

Since no eigenvalue crosses \(\mu\) while \(\mu\) remains in \(I\),
the sets of eigenvalues appearing in the products are constant on
\(I\). Therefore,
\[
\frac{d}{d\mu}\log E_{A,B}(\mu)
=
\sum_{\mu-\lambda_\ell^A>0}
\frac{m(\lambda_\ell^A)}{\mu-\lambda_\ell^A}
-
\sum_{\mu-\lambda_\ell^B>0}
\frac{m(\lambda_\ell^B)}{\mu-\lambda_\ell^B}.
\]
\begin{thm}\label{thm:log-derivative-relative-zeta-determinant}
Let \(A\) and \(B\) be self-adjoint operators on \(\mathcal H\),
bounded from below and having compact resolvent. Let
\(I\subset\mathbb R\cap\rho(A)\cap\rho(B)\) be an open interval.
Assume that, for every \(\mu\in I\), the positive-spectrum zeta
functions of \(A-\mu\) and \(B-\mu\) admit meromorphic
continuations that are regular at \(s=0\) and depend
differentiably on \(\mu\) near \(s=0\). Assume also that
\((A-\mu)^{-1}-(B-\mu)^{-1}\) is trace class for every
\(\mu\in I\). Finally, assume that the relative contour
representation above holds with sufficient trace-norm decay,
locally uniformly in \(\mu\), to justify differentiation with
respect to \(\mu\), integration by parts on truncated contours, and
passage to the limit.

Then
\[
\begin{aligned}
\frac{d}{d\mu}
\log\operatorname{Det}_{\zeta}(A-\mu,B-\mu)
&=
-\operatorname{Tr}_{\mathcal H}\bigl(
(A-\mu)^{-1}-(B-\mu)^{-1}
\bigr)
-
\frac{d}{d\mu}\log E_{A,B}(\mu) \\
&=
\operatorname{Tr}_{\mathcal H}\bigl(
R_A(\mu)-R_B(\mu)
\bigr)
-
\frac{d}{d\mu}\log E_{A,B}(\mu).
\end{aligned}
\]
\end{thm}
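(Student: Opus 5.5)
The plan is to combine the two identities derived just before the statement. First, under the stated hypotheses (regularity at \(s=0\), differentiable dependence on \(\mu\), trace-class relative resolvent, and the decay needed to differentiate, integrate by parts on truncated contours and pass to the limit), the preceding discussion already gives
\[
\frac{d}{d\mu}\log\operatorname{Det}_{\zeta}(A-\mu,B-\mu)
=
-\operatorname{Tr}^+\bigl((A-\mu)^{-1}-(B-\mu)^{-1}\bigr).
\]
I will take this as established, checking only that the admissible contour \(\gamma^+\) can be chosen independently of \(\mu\) in a small subinterval of \(I\). This holds because \(I\subset\rho(A)\cap\rho(B)\), so no eigenvalue of \(A-\mu\) or \(B-\mu\) crosses zero as \(\mu\) moves in \(I\). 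Since the conclusion is pointwise in \(\mu\), working locally suffices.

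Second, I use the splitting \(\operatorname{Tr}^+=\operatorname{Tr}_{\mathcal H}-\operatorname{Tr}^-\). It holds because \(P^+_{A-\mu}+P^-_{A-\mu}=I\), since zero is not an eigenvalue, and likewise for \(B\). The full relative trace exists by hypothesis, and the \(\operatorname{Tr}^-\) terms have finite rank, since \(A\) and \(B\) are bounded below with compact resolvent. Hence each \(\operatorname{Tr}^-\) piece can be computed separately. Diagonalizing, \((A-\mu)^{-1}P^-_{A-\mu}\) has eigenvalues \((\lambda^A_\ell-\mu)^{-1}\) for \(\lambda^A_\ell<\mu\), counted with multiplicity. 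Therefore
\[
\operatorname{Tr}^-\bigl((A-\mu)^{-1}-(B-\mu)^{-1}\bigr)
=
-\sum_{\mu-\lambda^A_\ell>0}\frac{m(\lambda^A_\ell)}{\mu-\lambda^A_\ell}
+\sum_{\mu-\lambda^B_\ell>0}\frac{m(\lambda^B_\ell)}{\mu-\lambda^B_\ell}
=-\frac{d}{d\mu}\log E_{A,B}(\mu),
\]
by the formula for \(\frac{d}{d\mu}\log E_{A,B}\) computed above. That formula is valid because the index sets are constant on \(I\).

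Substituting gives
\[
\frac{d}{d\mu}\log\operatorname{Det}_\zeta(A-\mu,B-\mu)
=-\operatorname{Tr}_{\mathcal H}\bigl((A-\mu)^{-1}-(B-\mu)^{-1}\bigr)-\frac{d}{d\mu}\log E_{A,B}(\mu).
\]
The second displayed form follows from \((A-\mu)^{-1}=-(\mu-A)^{-1}=-R_A(\mu)\), and similarly for \(B\).

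The only delicate point is the first identity. In particular, the contour identity \(\frac{1}{2\pi i}\int_{\gamma^+} z^{-1}\operatorname{Tr}(Q_A-Q_B)\,dz=\operatorname{Tr}\bigl((A-\mu)^{-1}P^+_{A-\mu}-(B-\mu)^{-1}P^+_{B-\mu}\bigr)\) requires that the limit of the truncated contours produces only the relative trace and no boundary contribution. This is exactly what the assumed locally uniform trace-norm decay supplies. I would justify it by applying the functional calculus on each truncated contour, where only finitely many eigenvalues are enclosed, and then letting the truncation go to infinity. The remaining steps are finite-dimensional bookkeeping.
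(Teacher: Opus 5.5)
Your proposal is correct and follows the paper's argument essentially verbatim. The contour and integration-by-parts identity gives \(-\operatorname{Tr}^+\); the splitting \(\operatorname{Tr}^+=\operatorname{Tr}_{\mathcal H}-\operatorname{Tr}^-\) holds because zero is not in the spectrum of either shifted operator; the finite-rank term \(\operatorname{Tr}^-\) equals \(-\frac{d}{d\mu}\log E_{A,B}(\mu)\); and \((A-\mu)^{-1}=-R_A(\mu)\), together with the analogous identity for \(B\), gives the second form, with your observation that \(I\subset\rho(A)\cap\rho(B)\) already rules out eigenvalue crossings being only a harmless refinement of the paper's ``after shrinking the interval'' step.
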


We now apply the relative formula to self-adjoint realizations of the
conic Laplacian.
Let \((\mathcal V,\mathcal V^\#)\) and
\((\mathcal W,\mathcal W^\#)\) be Lagrangian pairs in
\((V_S,\omega_S)\). Let
\(I\subset\mb R\cap
\rho(\Delta_{X,\mathcal V})
\cap\rho(\Delta_{X,\mathcal W})\)
be an open interval, and set
\(
E_{\mathcal V,\mathcal W}^{X}(\mu)
:=
E_{\Delta_{X,\mathcal V},\Delta_{X,\mathcal W}}(\mu)
\)
for \(\mu\in I\).

We verify the operator-theoretic hypotheses of
Theorem~\ref{thm:log-derivative-relative-zeta-determinant} in the
present setting. The realizations \(\Delta_{X,\mathcal V}\) and
\(\Delta_{X,\mathcal W}\) are bounded from below and have compact
resolvent, while Kreĭn's resolvent formula shows that
\(R_{\mathcal V}(\mu)-R_{\mathcal W}(\mu)\) is finite rank. Since
\[
(\Delta_{X,\mathcal V}-\mu)^{-1}
-
(\Delta_{X,\mathcal W}-\mu)^{-1}
=
-\bigl(
R_{\mathcal V}(\mu)-R_{\mathcal W}(\mu)
\bigr),
\]
the difference of the shifted inverses is also finite rank and
hence trace class. The remaining analytic hypotheses are precisely
Assumption~\ref{ass:zeta-regularity}.
Theorem~\ref{thm:log-derivative-relative-zeta-determinant} expresses
the logarithmic derivative of the relative zeta determinant as the
trace of the resolvent difference minus the logarithmic derivative
of \(E_{\mathcal V,\mathcal W}^{X}\). By
Corollary~\ref{cor:resolvent-trace-identity}, this resolvent trace is
\(
\frac{d}{d\mu}
\log d_{\mathcal V,\mathcal W}^{X}(\mu).
\)
Therefore,
\[
\frac{d}{d\mu}
\log\operatorname{Det}_{\zeta}
(\Delta_{X,\mathcal V}-\mu,\Delta_{X,\mathcal W}-\mu)
=
\frac{d}{d\mu}
\log d_{\mathcal V,\mathcal W}^{X}(\mu)
-
\frac{d}{d\mu}
\log E_{\mathcal V,\mathcal W}^{X}(\mu).
\]
Equivalently,
\[
\frac{d}{d\mu}
\log\left(
\frac{
\operatorname{Det}_{\zeta}
(\Delta_{X,\mathcal V}-\mu,\Delta_{X,\mathcal W}-\mu)
E_{\mathcal V,\mathcal W}^{X}(\mu)
}{
d_{\mathcal V,\mathcal W}^{X}(\mu)
}
\right)
=
0.
\]
Hence the quantity inside the logarithm is constant on \(I\), and
the desired comparison formula follows.

\begin{thm}\label{thm:shifted-comparison-formula}
Assume Assumption~\ref{ass:zeta-regularity}.
Let \(I\) be an open
interval contained in
\(
\mb R\setminus
(\sigma(\Delta_{X,\mathcal V})
\cup\sigma(\Delta_{X,\mathcal W})).
\)
Then there exists a nonzero constant
\(C_{\mathcal V,\mathcal W}\), independent of \(\mu\in I\), such
that
\[
\frac{
\operatorname{Det}_{\zeta}(\Delta_{X,\mathcal V}-\mu)
}{
\operatorname{Det}_{\zeta}(\Delta_{X,\mathcal W}-\mu)
}
=
C_{\mathcal V,\mathcal W}
\frac{
d_{\mathcal V,\mathcal W}^{X}(\mu)
}{
E_{\mathcal V,\mathcal W}^{X}(\mu)
}.
\]
The constant may depend on \(I\), the Lagrangian pairs
\((\mathcal V,\mathcal V^\#)\) and
\((\mathcal W,\mathcal W^\#)\), and the bases used to define
\(d_{\mathcal V,\mathcal W}^{X}\).
\end{thm}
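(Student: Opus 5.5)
The argument combines Theorem~\ref{thm:log-derivative-relative-zeta-determinant} with the resolvent trace identity of Corollary~\ref{cor:resolvent-trace-identity}. The preceding discussion already sketches it, so the proof only needs to check the hypotheses carefully and integrate over the connected interval \(I\).

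First, set \(A:=\Delta_{X,\mathcal V}\), \(B:=\Delta_{X,\mathcal W}\) and \(\mathcal H:=L^2(X')\), which is separable and infinite-dimensional. By Theorem~\ref{thm:compact-resolvent}, both operators are self-adjoint, bounded from below, and have compact resolvent. Since \(I\) avoids both spectra, \(I\subset\mathbb R\cap\rho(A)\cap\rho(B)\). Kre\u{\i}n's formula (Theorem~\ref{thm:Krein-resolvent-formula}) shows that \(R_{\mathcal V}(\mu)-R_{\mathcal W}(\mu)\) has finite rank, and hence so does \((A-\mu)^{-1}-(B-\mu)^{-1}=-(R_{\mathcal V}(\mu)-R_{\mathcal W}(\mu))\). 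The remaining analytic hypotheses of Theorem~\ref{thm:log-derivative-relative-zeta-determinant} are the regularity at \(s=0\), differentiability in \(\mu\), and locally uniform trace-norm decay on the contours. These are exactly what Assumption~\ref{ass:zeta-regularity} provides for the pair \((A,B)\) and its shifts.

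Applying Theorem~\ref{thm:log-derivative-relative-zeta-determinant} and then Corollary~\ref{cor:resolvent-trace-identity} at real \(\mu\in I\) gives
\[
\frac{d}{d\mu}\log\operatorname{Det}_{\zeta}(A-\mu,B-\mu)
=\frac{d}{d\mu}\log d_{\mathcal V,\mathcal W}^{X}(\mu)-\frac{d}{d\mu}\log E_{\mathcal V,\mathcal W}^{X}(\mu).
\]
To make this rigorous I would argue as follows.

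\begin{itemize}
\item Lemma~\ref{lem:boundary-operator-invertible} gives \(d_{\mathcal V,\mathcal W}^{X}(\mu)\neq0\) on \(I\).
\item \(d_{\mathcal V,\mathcal W}^{X}\) is real-analytic on \(I\), being holomorphic on \(\rho(\Delta_{X,\mathcal W})\).
\item Since \(d\) may be complex-valued on \(I\), I would work with \(d'/d\) rather than a real logarithm. Define
\[
F(\mu):=\operatorname{Det}_{\zeta}(A-\mu,B-\mu)\,E_{\mathcal V,\mathcal W}^{X}(\mu)/d_{\mathcal V,\mathcal W}^{X}(\mu).
\]
This is a differentiable, nowhere-vanishing function on \(I\) with \(F'/F=0\), so \(F\) is constant on the connected interval \(I\).
\end{itemize}

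Call this constant \(C_{\mathcal V,\mathcal W}\). It is nonzero because the determinants are exponentials and \(E>0\). Then \(\operatorname{Det}_\zeta(A-\mu,B-\mu)=\operatorname{Det}_\zeta(A-\mu)/\operatorname{Det}_\zeta(B-\mu)\) yields the stated formula. The constant depends on \(I\), since \(E\) and the positive-spectrum convention change when \(\mu\) crosses eigenvalues. It also depends on the chosen bases, since changing bases rescales \(d\) by a constant.

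The main obstacle is the applicability of Theorem~\ref{thm:log-derivative-relative-zeta-determinant}: its contour and decay hypotheses are not proved here but assumed. The proof must therefore state clearly that Assumption~\ref{ass:zeta-regularity} is being invoked for the shifted pairs \((\Delta_{X,\mathcal V}-\mu,\Delta_{X,\mathcal W}-\mu)\) for every \(\mu\in I\), with uniformity locally in \(\mu\). A secondary point is the sign bookkeeping between \((A-\mu)^{-1}\) and \(R_A(\mu)=(\mu-A)^{-1}\); I would verify it once so that the trace term in Theorem~\ref{thm:log-derivative-relative-zeta-determinant} matches the orientation in Corollary~\ref{cor:resolvent-trace-identity}.
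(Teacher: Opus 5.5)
Your proposal is correct and follows the paper's argument. You verify the hypotheses of Theorem~\ref{thm:log-derivative-relative-zeta-determinant} via Theorem~\ref{thm:compact-resolvent}, Kre\u{\i}n's formula, and Assumption~\ref{ass:zeta-regularity}, substitute the trace identity of Corollary~\ref{cor:resolvent-trace-identity}, and conclude that \(\operatorname{Det}_\zeta(\Delta_{X,\mathcal V}-\mu,\Delta_{X,\mathcal W}-\mu)\,E^X_{\mathcal V,\mathcal W}(\mu)/d^X_{\mathcal V,\mathcal W}(\mu)\) is constant on the connected interval \(I\). Working with \(F'/F\) rather than a real logarithm, and citing Lemma~\ref{lem:boundary-operator-invertible} for the nonvanishing of \(d^X_{\mathcal V,\mathcal W}\) on \(I\), are sensible refinements that make explicit two points the paper leaves implicit.
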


\begin{rem}
The extension-theoretic, compactness, and trace-class ingredients of
the theorem have been established in the preceding sections. The
conditional input is confined to the zeta continuation and relative
contour estimates stated in Assumption~\ref{ass:zeta-regularity}.
\end{rem}

\begin{cor}\label{cor:zeta-determinant-comparison-zero}
Assume Assumption~\ref{ass:zeta-regularity}. Set
\(
r:=
\dim_{\mb C}\ker\Delta_{X,\mathcal V}
-
\dim_{\mb C}\ker\Delta_{X,\mathcal W},
\)
and define
\(
(d_{\mathcal V,\mathcal W}^{X})^{\mathrm{red}}(\lambda)
:=
\lambda^{-r}d_{\mathcal V,\mathcal W}^{X}(\lambda).
\)
Then
\((d_{\mathcal V,\mathcal W}^{X})^{\mathrm{red}}\) is holomorphic
and nonzero at \(\lambda=0\), and
\[
\frac{
\operatorname{Det}_{\zeta}(\Delta_{X,\mathcal V})
}{
\operatorname{Det}_{\zeta}(\Delta_{X,\mathcal W})
}
=
C_{\mathcal V,\mathcal W}
(d_{\mathcal V,\mathcal W}^{X})^{\mathrm{red}}(0)
\frac{
\displaystyle
\prod_{\lambda_\ell^{\mathcal W}<0}
(-\lambda_\ell^{\mathcal W})
^{m(\lambda_\ell^{\mathcal W})}
}{
\displaystyle
\prod_{\lambda_\ell^{\mathcal V}<0}
(-\lambda_\ell^{\mathcal V})
^{m(\lambda_\ell^{\mathcal V})}
}.
\]
The products are taken over the distinct negative eigenvalues, with
their multiplicities appearing as exponents, and an empty product
is understood to be \(1\).
\end{cor}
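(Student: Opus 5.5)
The plan is to let \(\mu\to0^{+}\) in Theorem~\ref{thm:shifted-comparison-formula}, applied on an interval lying just to the right of \(0\). The spectra of \(\Delta_{X,\mathcal V}\) and \(\Delta_{X,\mathcal W}\) are discrete by Theorem~\ref{thm:compact-resolvent}. So there is \(\varepsilon>0\) such that \(I:=(0,\varepsilon)\) contains no eigenvalue of either operator and neither operator has an eigenvalue in \((0,\varepsilon]\). We apply Theorem~\ref{thm:shifted-comparison-formula} on this \(I\), and \(C_{\mathcal V,\mathcal W}\) denotes the resulting constant. The right endpoint is chosen deliberately. For \(\mu\in I\), a zero eigenvalue of \(\Delta_{X,\mathcal V}\) becomes the negative eigenvalue \(-\mu\) of \(\Delta_{X,\mathcal V}-\mu\), so it is omitted from the positive-spectrum zeta function. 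The positive spectrum of \(\Delta_{X,\mathcal V}-\mu\) is therefore exactly \(\{\lambda-\mu:\lambda>0\}\), and the same holds for \(\mathcal W\).

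\emph{Step 1: the determinant function.} By Proposition~\ref{prop:order-of-determinant-function}, \(\operatorname{ord}_0 d_{\mathcal V,\mathcal W}^X=r\). This also covers the case where \(0\) is an eigenvalue of neither operator: then \(r=0\) and \(d_{\mathcal V,\mathcal W}^X\) is holomorphic and nonzero at \(0\), since \(T_{\mathcal V,\mathcal W}(0)\) is invertible by Lemma~\ref{lem:boundary-operator-invertible}. Hence \((d_{\mathcal V,\mathcal W}^X)^{\mathrm{red}}=\lambda^{-r}d_{\mathcal V,\mathcal W}^X\) is holomorphic and nonzero at \(0\), which gives the first claim. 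We may then write \(d_{\mathcal V,\mathcal W}^X(\mu)=\mu^{r}(d_{\mathcal V,\mathcal W}^X)^{\mathrm{red}}(\mu)\) on \(I\).

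\emph{Step 2: the correction factor.} For \(\mu\in I\), the eigenvalues \(\lambda\) with \(\mu-\lambda>0\) are exactly the nonpositive ones. Separating the zero eigenvalue gives
\[
E_{\mathcal V,\mathcal W}^X(\mu)=\mu^{r}\,
\frac{\prod_{\lambda_\ell^{\mathcal V}<0}(\mu-\lambda_\ell^{\mathcal V})^{m(\lambda_\ell^{\mathcal V})}}
{\prod_{\lambda_\ell^{\mathcal W}<0}(\mu-\lambda_\ell^{\mathcal W})^{m(\lambda_\ell^{\mathcal W})}} .
\]
The factor \(\mu^r\) cancels the one from Step~1 exactly. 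Consequently \(d_{\mathcal V,\mathcal W}^X/E_{\mathcal V,\mathcal W}^X\) converges, as \(\mu\to0^+\), to \((d_{\mathcal V,\mathcal W}^X)^{\mathrm{red}}(0)\) multiplied by the stated ratio of products of \((-\lambda_\ell)^{m(\lambda_\ell)}\). The \(\mathcal W\)-product ends up in the numerator.

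\emph{Step 3: continuity of the determinants.} This is the main obstacle: we need \(\operatorname{Det}_\zeta(\Delta_{X,\mathcal V}-\mu)\to\operatorname{Det}_\zeta(\Delta_{X,\mathcal V})\) as \(\mu\to0^+\), and similarly for \(\mathcal W\). For \(\mu\in[0,\varepsilon)\) the positive-spectrum zeta function is \(\sum_{\lambda>0}(\lambda-\mu)^{-s}\), with a fixed index set. Assumption~\ref{ass:zeta-regularity} supplies regularity at \(s=0\) and differentiable, hence continuous, dependence on the spectral shift. We will use it on a neighborhood of \(\mu=0\) within \([0,\varepsilon)\), where no eigenvalue crosses zero from the positive side. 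I would justify this by noting that the difference \(\sum_{\lambda>0}\bigl((\lambda-\mu)^{-s}-\lambda^{-s}\bigr)\) converges for \(\operatorname{Re}s>\) (convergence exponent) \(-1\). If the assumption were felt insufficient, this is the point where it would be invoked explicitly. Passing to the limit in Theorem~\ref{thm:shifted-comparison-formula} then yields the displayed formula.
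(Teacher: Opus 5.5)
Your proposal is correct and follows the paper's proof essentially step for step. You apply Theorem~\ref{thm:shifted-comparison-formula} on \(I=(0,\varepsilon)\), factor \(d_{\mathcal V,\mathcal W}^X(\mu)=\mu^r(d_{\mathcal V,\mathcal W}^X)^{\mathrm{red}}(\mu)\) via Proposition~\ref{prop:order-of-determinant-function}, cancel this against the \(\mu^r\) in \(E_{\mathcal V,\mathcal W}^X(\mu)\), and let \(\mu\to0^+\); your separate handling of the case where \(0\) is an eigenvalue of neither operator, via Lemma~\ref{lem:boundary-operator-invertible}, is a small improvement.

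The heuristic in Step~3 needs correcting, though. On a surface the eigenvalue counting function grows linearly, so \(\sum_{\lambda>0}\bigl((\lambda-\mu)^{-s}-\lambda^{-s}\bigr)\) converges only for \(\operatorname{Re}s>0\), and its termwise \(s\)-derivative at \(s=0\) behaves like the divergent \(\sum\mu/\lambda\). The continuity of \(\operatorname{Det}_\zeta(\Delta_{X,\mathcal V}-\mu)\) at \(\mu=0\) therefore does rest on Assumption~\ref{ass:zeta-regularity}, exactly as your fallback says. The paper relies on the same input implicitly when it invokes the positive-spectrum convention.
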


\begin{proof}
Choose \(\varepsilon>0\) such that
\(
(-\varepsilon,\varepsilon)\cap
\bigl(
\sigma(\Delta_{X,\mathcal V})
\cup\sigma(\Delta_{X,\mathcal W})
\bigr)
\subset\{0\},
\)
and apply Theorem~\ref{thm:shifted-comparison-formula} on
\(I=(0,\varepsilon)\).

For \(\mu\in I\), every zero eigenvalue becomes the negative
eigenvalue \(-\mu\) of the corresponding shifted operator. By the
definition of \(E_{\mathcal V,\mathcal W}^{X}\),
\(
E_{\mathcal V,\mathcal W}^{X}(\mu)
=
\mu^r\widetilde E_{\mathcal V,\mathcal W}^{X}(\mu),
\)
where
\[
\widetilde E_{\mathcal V,\mathcal W}^{X}(\mu)
:=
\frac{
\displaystyle
\prod_{\lambda_\ell^{\mathcal V}<0}
(\mu-\lambda_\ell^{\mathcal V})
^{m(\lambda_\ell^{\mathcal V})}
}{
\displaystyle
\prod_{\lambda_\ell^{\mathcal W}<0}
(\mu-\lambda_\ell^{\mathcal W})
^{m(\lambda_\ell^{\mathcal W})}
}.
\]

By Proposition~\ref{prop:order-of-determinant-function},
\(
\operatorname{ord}_0
d_{\mathcal V,\mathcal W}^{X}
=
r.
\)
Hence
\(
d_{\mathcal V,\mathcal W}^{X}(\mu)
=
\mu^r
(d_{\mathcal V,\mathcal W}^{X})^{\mathrm{red}}(\mu).
\)
The factors \(\mu^r\) cancel in the shifted comparison formula,
giving
\[
\frac{
\operatorname{Det}_{\zeta}(\Delta_{X,\mathcal V}-\mu)
}{
\operatorname{Det}_{\zeta}(\Delta_{X,\mathcal W}-\mu)
}
=
C_{\mathcal V,\mathcal W}
\frac{
(d_{\mathcal V,\mathcal W}^{X})^{\mathrm{red}}(\mu)
}{
\widetilde E_{\mathcal V,\mathcal W}^{X}(\mu)
}.
\]

Letting \(\mu\to0^+\), the positive-spectrum convention shows that
the zeta determinants of the shifted operators converge to the
corresponding unshifted positive-spectrum zeta determinants. Moreover,
\[
\widetilde E_{\mathcal V,\mathcal W}^{X}(0)
=
\frac{
\displaystyle
\prod_{\lambda_\ell^{\mathcal V}<0}
(-\lambda_\ell^{\mathcal V})
^{m(\lambda_\ell^{\mathcal V})}
}{
\displaystyle
\prod_{\lambda_\ell^{\mathcal W}<0}
(-\lambda_\ell^{\mathcal W})
^{m(\lambda_\ell^{\mathcal W})}
}.
\]
The result follows.
\end{proof}

\section{The Weyl Curve and the Determinant Line}
We now organize the preceding analytic results geometrically. Related
maps into Grassmannians, formed from null spaces of elliptic cone
operators, appear in \cite{GilKrainerMendoza2007}, while Wang developed an abstract
theory of Weyl curves for symmetric operators
\cite{WangWeylCurve}. The treatment below is self-contained and is
formulated directly in terms of the critical asymptotic boundary data
constructed above. Its purpose is to interpret the analytic identities
from the preceding sections: local graph coordinates recover the Weyl
functions, the real tangent form recovers their derivative identity,
and the pullback determinant line gives a geometric interpretation of
the boundary and zeta determinants.

For \(\lambda\in\mb C\), define the intrinsic boundary-data
subspace
\(\mc K_X(\lambda):=
\pi_S\bigl(\ker(\Delta_X-\lambda)\bigr)\subset V_S\).

\begin{lem}\label{lem:cauchy-data-orthogonality}
For every \(\lambda\in\mb C\), one has
\(\mc K_X(\overline{\lambda})
\subset\mc K_X(\lambda)^{\perp_{\omega_S}}\).
In particular, if \(\lambda\in\mb R\), then
\(\mc K_X(\lambda)\) is isotropic.
\end{lem}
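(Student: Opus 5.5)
The inclusion follows from the global Green formula, Lemma~\ref{lem:green-form-global}, together with the eigen-equations satisfied by formal solutions. Fix \(\lambda\in\mb C\). Take arbitrary elements \(h\in\mc K_X(\overline{\lambda})\) and \(\ell\in\mc K_X(\lambda)\). By definition of \(\mc K_X\), there are formal solutions \(v\in\ker(\Delta_X-\overline{\lambda})\) and \(u\in\ker(\Delta_X-\lambda)\), both in \(D_{\max}(\Delta_X)\), with \(\pi_S(v)=h\) and \(\pi_S(u)=\ell\). We must show \(\omega_S(h,\ell)=0\).

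Next, compute \(q_X(v,u)\) directly from its definition. Since \(\Delta_Xv=\overline{\lambda}v\) and \(\Delta_Xu=\lambda u\), and the \(L^2(X')\)-inner product is linear in the first slot and conjugate-linear in the second, we get
\[
q_X(v,u)=\langle\overline{\lambda}v,u\rangle_{L^2(X')}-\langle v,\lambda u\rangle_{L^2(X')}
=\overline{\lambda}\langle v,u\rangle_{L^2(X')}-\overline{\lambda}\langle v,u\rangle_{L^2(X')}=0.
\]
This is the same cancellation already used in the proof of Lemma~\ref{lem:weyl-function-hermitian-symmetry}. Lemma~\ref{lem:green-form-global} then gives \(\omega_S(h,\ell)=\omega_S(\pi_S(v),\pi_S(u))=q_X(v,u)=0\). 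Since \(h\) and \(\ell\) were arbitrary, \(\mc K_X(\overline{\lambda})\subset\mc K_X(\lambda)^{\perp_{\omega_S}}\).

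For the second assertion, take \(\lambda\in\mb R\). Then \(\overline{\lambda}=\lambda\), and the inclusion reads \(\mc K_X(\lambda)\subset\mc K_X(\lambda)^{\perp_{\omega_S}}\), which is the definition of isotropic in Definition~\ref{def:Lagrangian-Grassmannian}.

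The proof has no real obstacle. The only point needing care is getting the conjugation order right, so that the pairing of \(\lambda\)-solutions with \(\overline{\lambda}\)-solutions, and not \(\lambda\)-solutions with themselves, is what cancels. No resolvent assumption on \(\lambda\) is needed, because the kernel is taken inside \(D_{\max}(\Delta_X)\), where \(q_X\) and \(\pi_S\) are defined, in accordance with Definition~\ref{defn:formal-solution}.
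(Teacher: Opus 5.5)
Your proposal is correct and is essentially the paper's proof: both lift the boundary data to formal solutions \(v\in\ker(\Delta_X-\overline{\lambda})\) and \(u\in\ker(\Delta_X-\lambda)\). Both then use the global Green formula together with the cancellation \(\overline{\lambda}\langle v,u\rangle-\overline{\lambda}\langle v,u\rangle=0\) to get \(\omega_S(\pi_Sv,\pi_Su)=0\), and read off isotropy when \(\lambda\) is real.
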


\begin{proof}
Let \(g\in\mc K_X(\overline{\lambda})\) and
\(h\in\mc K_X(\lambda)\). Choose
\(v\in\ker(\Delta_X-\overline{\lambda})\) and
\(u\in\ker(\Delta_X-\lambda)\) such that
\(\pi_Sv=g\) and \(\pi_Su=h\). By the Green identity,
\[
\begin{aligned}
\omega_S(g,h)
&=
q_X(v,u) \\
&=
\langle\Delta_Xv,u\rangle
-
\langle v,\Delta_Xu\rangle \\
&=
\overline{\lambda}\langle v,u\rangle
-
\overline{\lambda}\langle v,u\rangle
=
0.
\end{aligned}
\]
Here the second equality in the last line uses the convention that
the inner product is linear in the first variable and conjugate
linear in the second. Hence
\(g\in\mc K_X(\lambda)^{\perp_{\omega_S}}\), proving the asserted
inclusion.

If \(\lambda\in\mb R\), then
\(\overline{\lambda}=\lambda\), and hence
\(\mc K_X(\lambda)
\subset\mc K_X(\lambda)^{\perp_{\omega_S}}\).
Thus \(\mc K_X(\lambda)\) is isotropic.
\end{proof}

\begin{lem}\label{lem:cauchy-data-orthogonal-complement}
Let \(\mc V\in\operatorname{LGr}(V_S,\omega_S)\). If
\(\lambda\in\rho(\Delta_{X,\mc V})\), then
\(\mc K_X(\overline{\lambda})
=\mc K_X(\lambda)^{\perp_{\omega_S}}\).
In particular, if
\(\lambda\in\mb R\cap\rho(\Delta_{X,\mc V})\), then
\(\mc K_X(\lambda)\) is Lagrangian.
\end{lem}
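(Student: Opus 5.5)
The plan is a dimension count. Lemma~\ref{lem:cauchy-data-orthogonality} already gives \(\mc K_X(\overline{\lambda})\subset\mc K_X(\lambda)^{\perp_{\omega_S}}\). Since \(\omega_S\) is nondegenerate, \(\dim_{\mb C}\mc K_X(\lambda)^{\perp_{\omega_S}}=\dim_{\mb C}V_S-\dim_{\mb C}\mc K_X(\lambda)\). It therefore suffices to prove
\[
\dim_{\mb C}\mc K_X(\lambda)=\dim_{\mb C}\mc K_X(\overline{\lambda})=\tfrac12\dim_{\mb C}V_S
\]
whenever \(\lambda\in\rho(\Delta_{X,\mc V})\).

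First, \(\overline{\lambda}\in\rho(\Delta_{X,\mc V})\) as well, because \(\Delta_{X,\mc V}\) is self-adjoint. So it is enough to compute \(\dim\mc K_X(\lambda)\) for an arbitrary \(\lambda\in\rho(\Delta_{X,\mc V})\).

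By Corollary~\ref{cor:Y-projection}, \(D_{\max}(\Delta_X)=D_{\mc V}\oplus\ker(\Delta_X-\lambda)\), and the rank statement there gives \(\dim_{\mb C}\ker(\Delta_X-\lambda)=\frac12\dim_{\mb C}V_S\). Next I check that \(\pi_S\) is injective on \(\ker(\Delta_X-\lambda)\). If \(u\in\ker(\Delta_X-\lambda)\) and \(\pi_S(u)=0\), then \(u\in D_{\min}(\Delta_X)\subset D_{\mc V}\) by Theorem~\ref{thm:quotient-general}. Hence \((\Delta_{X,\mc V}-\lambda)u=0\), and \(u=0\) since \(\lambda\) is in the resolvent set. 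It follows that \(\dim\mc K_X(\lambda)=\dim\ker(\Delta_X-\lambda)=\frac12\dim V_S\).

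Alternatively, one can use Proposition~\ref{prop:Gamma-characterization}: for any complementary Lagrangian \(\mc V^\#\), the map \(\widetilde{\pi}_{\mc V^\#}\) restricted to \(\ker(\Delta_X-\lambda)\) is a bijection onto \(\mc V^\#\). Either route gives the same count.

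Combining these, \(\mc K_X(\overline{\lambda})\) is contained in \(\mc K_X(\lambda)^{\perp_{\omega_S}}\) and both have dimension \(\frac12\dim V_S\), so they are equal. For real \(\lambda\) this reads \(\mc K_X(\lambda)=\mc K_X(\lambda)^{\perp_{\omega_S}}\), which is the Lagrangian condition.

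The only delicate point is the dimension of \(\ker(\Delta_X-\lambda)\). Corollary~\ref{cor:Y-projection} supplies it through \(D_{\max}/D_{\mc V}\cong V_S/\mc V\) and \(\dim\mc V=\frac12\dim V_S\). That last equality holds because a Lagrangian subspace satisfies \(\dim\mc V=\dim V_S-\dim\mc V\) by nondegeneracy of \(\omega_S\).
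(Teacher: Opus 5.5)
Your proposal is correct and follows the paper's proof: the inclusion from Lemma~\ref{lem:cauchy-data-orthogonality}, injectivity of \(\pi_S\) on \(\ker(\Delta_X-\mu)\) for \(\mu\in\{\lambda,\overline{\lambda}\}\) via \(D_{\min}(\Delta_X)\subset D_{\mc V}\) and the resolvent condition, and a dimension count using nondegeneracy of \(\omega_S\). The only cosmetic difference is the source of \(\dim_{\mb C}\ker(\Delta_X-\mu)=\frac12\dim_{\mb C}V_S\). You take it from the rank of \(Y_{\mc V}(\mu)\) in Corollary~\ref{cor:Y-projection}, while the paper cites the Poisson isomorphism \(\Gamma^X_{\mc V,\mc V^\#}(\mu):\mc V^\#\to\ker(\Delta_X-\mu)\), which is built from that same projection.
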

\begin{proof}
By Lemma~\ref{lem:cauchy-data-orthogonality},
\(\mc K_X(\overline{\lambda})
\subset\mc K_X(\lambda)^{\perp_{\omega_S}}\).
Choose a Lagrangian complement \(\mc V^\#\) of \(\mc V\). Since
\(\Delta_{X,\mc V}\) is self-adjoint,
\(\lambda\in\rho(\Delta_{X,\mc V})\) implies
\(\overline{\lambda}\in\rho(\Delta_{X,\mc V})\). For
\(\mu\in\{\lambda,\overline{\lambda}\}\), the Poisson operator
\(\Gamma_{\mc V,\mc V^\#}^X(\mu):
\mc V^\#\to\ker(\Delta_X-\mu)\)
is an isomorphism.
Moreover, the restriction of \(\pi_S\) to
\(\ker(\Delta_X-\mu)\) is injective. Indeed, if
\(u\in\ker(\Delta_X-\mu)\) and \(\pi_Su=0\), then
\(u\in D_{\min}(\Delta_X)\subset D_{\mc V}\), and hence
\((\Delta_{X,\mc V}-\mu)u=0\). Since
\(\mu\in\rho(\Delta_{X,\mc V})\), it follows that \(u=0\).
By the definition of \(\mc K_X(\mu)\), this restriction is also
surjective onto \(\mc K_X(\mu)\), and is therefore an isomorphism.
Consequently,
\[
\dim_{\mb C}\mc K_X(\lambda)
=
\dim_{\mb C}\mc K_X(\overline{\lambda})
=
\dim_{\mb C}\mc V^\#
=
\frac12\dim_{\mb C}V_S.
\]
Since \(\omega_S\) is nondegenerate,
\(\dim_{\mb C}\mc K_X(\lambda)^{\perp_{\omega_S}}
=\dim_{\mb C}V_S-\dim_{\mb C}\mc K_X(\lambda)
=\frac12\dim_{\mb C}V_S\).
The preceding inclusion is therefore an equality.

If \(\lambda\in\mb R\), then
\(\overline{\lambda}=\lambda\), and hence
\(\mc K_X(\lambda)
=\mc K_X(\lambda)^{\perp_{\omega_S}}\).
Thus \(\mc K_X(\lambda)\) is Lagrangian.
\end{proof}

Set
\(
    \Sigma_X
    :=
    \bigcup_{\mc V\in\operatorname{LGr}(V_S,\omega_S)}
    \rho(\Delta_{X,\mc V}).
\)

\begin{prop}\label{prop:domain-of-cauchy-data-map}
We have
\[
    \Sigma_X
    =
    \mb C\setminus\sigma_p(\Delta_{X,\min}),
\]
where \(\sigma_p(\Delta_{X,\min})\) denotes the point spectrum of the
minimal Laplacian. Consequently, \(\Sigma_X\) is open and
path-connected.
\end{prop}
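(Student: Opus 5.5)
We prove the two inclusions separately and then derive the topological consequences.

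For \(\Sigma_X\subset\mb C\setminus\sigma_p(\Delta_{X,\min})\), let \(\lambda\in\rho(\Delta_{X,\mc V})\) for some Lagrangian \(\mc V\). If \(u\in D_{\min}(\Delta_X)\) satisfies \(\Delta_Xu=\lambda u\), then \(u\in D_{\mc V}\), since \(D_{\{0\}}\subset D_{\mc V}\). Hence \((\Delta_{X,\mc V}-\lambda)u=0\), which forces \(u=0\). Thus \(\lambda\notin\sigma_p(\Delta_{X,\min})\).

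For the reverse inclusion, fix \(\lambda\notin\sigma_p(\Delta_{X,\min})\). Non-real \(\lambda\) is harmless: any Lagrangian \(\mc V\), for instance the one giving the Friedrichs realization, has \(\lambda\in\rho(\Delta_{X,\mc V})\) by self-adjointness. So the substance is \(\lambda\in\mb R\). The plan is to construct a Lagrangian \(\mc V\) with \(\ker(\Delta_{X,\mc V}-\lambda)=0\). By compactness of the resolvent (Theorem~\ref{thm:compact-resolvent}), this already gives \(\lambda\in\rho(\Delta_{X,\mc V})\).

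\begin{itemize}
\item \emph{Setup.} Put \(K:=\mc K_X(\lambda)=\pi_S(\ker(\Delta_X-\lambda))\). By Lemma~\ref{lem:cauchy-data-orthogonality}, \(K\) is isotropic. Since \(\lambda\notin\sigma_p(\Delta_{X,\min})\) and \(\ker\pi_S=D_{\min}(\Delta_X)\), the map \(\pi_S\) is injective on \(\ker(\Delta_X-\lambda)\).
\item \emph{Reduction.} For a Lagrangian \(\mc V\), the space \(\ker(\Delta_{X,\mc V}-\lambda)\) is isomorphic via \(\pi_S\) to \(K\cap\mc V\). It therefore suffices to find a Lagrangian \(\mc V\) with \(\mc V\cap K=\{0\}\).
\item \emph{Linear algebra.} This is a standard fact for a nondegenerate skew-Hermitian form: \(i\omega_S\) is Hermitian of signature \((N,N)\), because Lagrangians exist, and every isotropic subspace \(K\) admits a transversal Lagrangian. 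Construct it by extending \(K\) to a Lagrangian \(L\supset K\) and choosing a Lagrangian complement \(L'\) of \(L\). Identify Lagrangians transverse to \(L'\) with graphs of \(\omega\)-Hermitian maps \(L\to L'\). Such a graph \(\{x+Ax\}\) meets \(K\subset L\) only where \(Ax=0\), so taking \(A\) Hermitian and invertible (for example the identity in dual bases) gives a Lagrangian \(\mc V\) transverse to \(L\), hence to \(K\).
\end{itemize}

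This linear-algebra step is the main point to check carefully, including the existence of the Lagrangian extension \(L\supset K\). That extension follows because \(K^{\perp}/K\) inherits a nondegenerate form of balanced signature.

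For the consequences, \(\sigma_p(\Delta_{X,\min})\) is contained in \(\sigma(\Delta_{X,\mc V})\) for any fixed Lagrangian \(\mc V\), by the first inclusion. That spectrum is a discrete subset of \(\mb R\), by Theorem~\ref{thm:compact-resolvent}. Hence \(\Sigma_X\) is the complement in \(\mb C\) of a closed discrete set, so it is open and path-connected.
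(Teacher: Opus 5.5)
Your proof is correct and follows the paper's argument essentially step by step: the first inclusion via \(D_{\min}(\Delta_X)\subset D_{\mc V}\), and, for real \(\lambda\), the reduction to finding a Lagrangian transverse to the isotropic space \(\mc K_X(\lambda)\), obtained by extending \(\mc K_X(\lambda)\) to a Lagrangian \(L\) and passing to a complement. As in the paper, you then use compactness of the resolvent to upgrade \(\ker(\Delta_{X,\mc V}-\lambda)=0\) to \(\lambda\in\rho(\Delta_{X,\mc V})\), and the same closed-discrete-set argument for the topology. Your graph construction over \(L\) is unnecessary, since the Lagrangian complement \(L'\) already satisfies \(L'\cap K\subset L'\cap L=\{0\}\); this is exactly the paper's choice \(\mc V=\mc W^\#\). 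Your justification of the extension \(K\subset L\) via \(K^{\perp}/K\) supplies a step the paper leaves implicit.
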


\begin{proof}
Since
\(
    \Sigma_X
    =
    \bigcup_{\mc V\in\operatorname{LGr}(V_S,\omega_S)}
    \rho(\Delta_{X,\mc V}),
\)
and each resolvent set is open, the set \(\Sigma_X\) is open.
Suppose first that
\(\lambda_0\in\sigma_p(\Delta_{X,\min})\). Then there exists
\(0\neq u_0\in D_{\min}(\Delta_X)\) such that
\(
    (\Delta_{X,\min}-\lambda_0)u_0=0.
\)
For every
\(\mc V\in\operatorname{LGr}(V_S,\omega_S)\), we have
\(D_{\min}(\Delta_X)\subset D_{\mc V}\), and hence
\(
    (\Delta_{X,\mc V}-\lambda_0)u_0=0.
\)
Thus
\(\lambda_0\in\sigma_p(\Delta_{X,\mc V})\) for every \(\mc V\), so
\(\lambda_0\notin\Sigma_X\). Therefore
\[
    \sigma_p(\Delta_{X,\min})
    \subset
    \mb C\setminus\Sigma_X.
\]
Conversely, suppose that
\(\lambda_0\notin\sigma_p(\Delta_{X,\min})\). We show that
\(\lambda_0\in\Sigma_X\).
If \(\lambda_0\notin\mb R\), then
\(\lambda_0\in\rho(\Delta_{X,\mc V})\) for every
\(\mc V\in\operatorname{LGr}(V_S,\omega_S)\), since each
\(\Delta_{X,\mc V}\) is self-adjoint. Hence
\(\lambda_0\in\Sigma_X\).
Suppose now that \(\lambda_0\in\mb R\). Then
\(\mc K_X(\lambda_0)\) is isotropic. Choose a Lagrangian subspace
\(\mc W\subset V_S\) containing \(\mc K_X(\lambda_0)\), and choose a
Lagrangian complement \(\mc W^\#\) of \(\mc W\). It follows that
\(
    \mc K_X(\lambda_0)\cap\mc W^\#=\{0\}.
\)
We claim that \(\lambda_0\) is not an eigenvalue of
\(\Delta_{X,\mc W^\#}\). Indeed, suppose that
\(u\in D_{\mc W^\#}\) satisfies
\(
    (\Delta_{X,\mc W^\#}-\lambda_0)u=0.
\)
Then
\(
    \pi_Su
    \in
    \mc K_X(\lambda_0)\cap\mc W^\#
    =
    \{0\}.
\)
Hence \(u\in D_{\min}(\Delta_X)\) and
\(
    (\Delta_{X,\min}-\lambda_0)u=0.
\)
Since
\(\lambda_0\notin\sigma_p(\Delta_{X,\min})\), we conclude that
\(u=0\). Thus
\(\lambda_0\notin\sigma_p(\Delta_{X,\mc W^\#})\).
Since \(\Delta_{X,\mc W^\#}\) is self-adjoint with compact resolvent,
its spectrum consists entirely of isolated eigenvalues of finite
multiplicity. Therefore
\(\lambda_0\in\rho(\Delta_{X,\mc W^\#})\), and hence
\(\lambda_0\in\Sigma_X\). This proves
\[
    \Sigma_X
    =
    \mb C\setminus\sigma_p(\Delta_{X,\min}).
\]
Finally, since \(\Delta_{X,\min}\) is symmetric,
\(\sigma_p(\Delta_{X,\min})\subset\mb R\). Moreover, for any fixed
\(\mc V\in\operatorname{LGr}(V_S,\omega_S)\),
\[
    \sigma_p(\Delta_{X,\min})
    \subset
    \sigma(\Delta_{X,\mc V}).
\]
Since \(\Delta_{X,\mc V}\) has compact resolvent, its spectrum has no
finite accumulation point. Consequently, the point spectrum of
\(\Delta_{X,\min}\) is a closed discrete subset of \(\mb R\). Its
complement in \(\mb C\) is path-connected.
Therefore \(\Sigma_X\) is path-connected.
\end{proof}
Set \(m:=\frac12\dim_{\mb C}V_S\), and let
\(\operatorname{Gr}_m(V_S)\) be the complex Grassmannian. Its points
are the \(m\)-dimensional complex linear subspaces of \(V_S\), and its
complex dimension is \(m^2\).

We regard the Lagrangian Grassmannian
\(\operatorname{LGr}(V_S,\omega_S)\) as a subset of
\(\operatorname{Gr}_m(V_S)\).

For every \(\lambda\in\Sigma_X\), there exists
\(\mc V\in\operatorname{LGr}(V_S,\omega_S)\) such that
\(\lambda\in\rho(\Delta_{X,\mc V})\). Choose a Lagrangian complement
\(\mc V^\#\) of \(\mc V\). The Poisson operator is an isomorphism
from \(\mc V^\#\) onto \(\ker(\Delta_X-\lambda)\). Hence
\(\dim_{\mb C}\ker(\Delta_X-\lambda)=m\).
Moreover, the restriction
\[
    \pi_S\big|_{\ker(\Delta_X-\lambda)}:
    \ker(\Delta_X-\lambda)
    \longrightarrow
    \mc K_X(\lambda)
\]
is a linear isomorphism. Indeed, it is surjective by the definition of
\(\mc K_X(\lambda)\). If
\(u\in\ker(\Delta_X-\lambda)\) and \(\pi_Su=0\), then
\(u\in D_{\min}(\Delta_X)\subset D_{\mc V}\), and hence
\(
    (\Delta_{X,\mc V}-\lambda)u=0.
\)
Since \(\lambda\in\rho(\Delta_{X,\mc V})\), it follows that \(u=0\).
Therefore
\[
    \dim_{\mb C}\mc K_X(\lambda)
    =
    \dim_{\mb C}\ker(\Delta_X-\lambda)
    =
    m.
\]
We thus obtain a map
\[
    \mc K_X:
    \Sigma_X
    \longrightarrow
    \operatorname{Gr}_m(V_S),
    \qquad
    \lambda
    \longmapsto
    \mc K_X(\lambda).
\]
We next prove that \(\mc K_X\) is holomorphic. We first establish the
local graph representation needed for this purpose.

Let \(E\in\operatorname{Gr}_m(V_S)\), and choose
\(F\in\operatorname{Gr}_m(V_S)\) such that
\(V_S=E\oplus F\). Consider the open subset
\[
    \mc U_E
    =
    \left\{
        \mc L\in\operatorname{Gr}_m(V_S):
        \mc L\cap E=\{0\}
    \right\}.
\]
Let \(P_E:V_S\to E\) and \(P_F:V_S\to F\) be the projections
associated with this decomposition.
For every \(\mc L\in\mc U_E\), the restriction
\(P_F|_{\mc L}:\mc L\to F\) is an isomorphism. Define
\(
    A_{\mc L}
    :=
    P_E\circ
    \bigl(P_F|_{\mc L}\bigr)^{-1}
    \in
    \operatorname{Hom}_{\mb C}(F,E).
\)
Then, for every \(h\in F\),
\(
    \bigl(P_F|_{\mc L}\bigr)^{-1}h
    =
    A_{\mc L}h+h,
\)
and hence
\[
    \mc L
    =
    \left\{
        A_{\mc L}h+h:
        h\in F
    \right\}
    =
    \operatorname{graph}_{(E,F)}(A_{\mc L}).
\]
The correspondence
\[
    \kappa_{(E,F)}:
    \mc U_E
    \longrightarrow
    \operatorname{Hom}_{\mb C}(F,E),
    \qquad
    \mc L
    \longmapsto
    A_{\mc L},
\]
defines a coordinate chart on
\(\operatorname{Gr}_m(V_S)\).

\begin{lem}\label{lem:cauchy-data-transverse}
Let \(\mc V\in\operatorname{LGr}(V_S,\omega_S)\). For every
\(\lambda\in\rho(\Delta_{X,\mc V})\),
\[
    \mc K_X(\lambda)\cap\mc V=\{0\}.
\]
\end{lem}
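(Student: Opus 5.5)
The argument is a direct unwinding of the definitions, in the same spirit as the proof of Proposition~\ref{prop:domain-of-cauchy-data-map}. Fix \(\lambda\in\rho(\Delta_{X,\mc V})\) and let \(h\in\mc K_X(\lambda)\cap\mc V\). By the definition of \(\mc K_X(\lambda)=\pi_S\bigl(\ker(\Delta_X-\lambda)\bigr)\), we may choose a formal solution \(u\in\ker(\Delta_X-\lambda)\subset D_{\max}(\Delta_X)\) with \(\pi_S u=h\). The goal is to show \(u=0\), which forces \(h=\pi_Su=0\).

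The key step is that the boundary condition places \(u\) in the domain of the realization. Since \(\pi_S u=h\in\mc V\), the definition of \(D_{\mc V}\) gives \(u\in D_{\mc V}\). Because \(\Delta_{X,\mc V}\) acts by \(\Delta_X\) on \(D_{\mc V}\), we obtain
\[
(\Delta_{X,\mc V}-\lambda)u=(\Delta_X-\lambda)u=0 .
\]
Since \(\lambda\in\rho(\Delta_{X,\mc V})\), the operator \(\Delta_{X,\mc V}-\lambda\) is injective, so \(u=0\) and hence \(h=0\). This proves \(\mc K_X(\lambda)\cap\mc V=\{0\}\).

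There is no genuine obstacle. The only point to check is that membership in \(D_{\mc V}\) depends on \(u\) solely through \(\pi_S u\), which is exactly how \(D_{\mc V}\) was defined; in particular, no choice of a Lagrangian complement \(\mc V^\#\) or Poisson operator is needed. As an optional consistency check, one can instead use Proposition~\ref{prop:Gamma-characterization}: with any Lagrangian complement \(\mc V^\#\), one has \(\widetilde\pi_{\mc V^\#}(u)=P_{\mc V^\#}h=0\), so \(u=\Gamma^X_{\mc V,\mc V^\#}(\lambda)0=0\).

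The result then yields, via the dimension count \(\dim\mc K_X(\lambda)=m\), that \(\mc K_X(\lambda)\in\mc U_{\mc V}\). Consequently \(\mc K_X(\lambda)\) is the graph of \(M^X_{\mc V,\mc V^\#}(\lambda)\) in the chart \(\kappa_{(\mc V,\mc V^\#)}\), by the identity \(\pi_S\Gamma f=Mf+f\).
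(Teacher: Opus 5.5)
Your proof is correct and follows the paper's own argument: choose \(u\in\ker(\Delta_X-\lambda)\) with \(\pi_Su=h\in\mc V\), observe that \(u\in D_{\mc V}\) so \((\Delta_{X,\mc V}-\lambda)u=0\), and conclude \(u=0\), hence \(h=0\), from \(\lambda\in\rho(\Delta_{X,\mc V})\). Your alternative check through the Poisson operator is also valid, and your closing remark about \(\mc K_X(\lambda)\in\mc U_{\mc V}\) matches how the paper uses the lemma.
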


\begin{proof}
Let \(h\in\mc K_X(\lambda)\cap\mc V\). By the definition of
\(\mc K_X(\lambda)\), there exists
\(u_\lambda\in\ker(\Delta_X-\lambda)\) such that
\(\pi_Su_\lambda=h\). Since \(h\in\mc V\), we have
\(u_\lambda\in D_{\mc V}\). Hence
\(
    (\Delta_{X,\mc V}-\lambda)u_\lambda=0.
\)
Since \(\lambda\in\rho(\Delta_{X,\mc V})\),
the operator \(\Delta_{X,\mc V}-\lambda\) is invertible. Therefore
\(u_\lambda=0\), and consequently \(h=\pi_Su_\lambda=0\).
\end{proof}

Let \((\mc V,\mc V^\#)\) be a Lagrangian pair. If
\(\lambda\in\rho(\Delta_{X,\mc V})\), then
\(\mc K_X(\lambda)\cap\mc V=\{0\}\), and hence
\(\mc K_X(\lambda)\in\mc U_{\mc V}\). By the definition of the Weyl
function,
\[
    \mc K_X(\lambda)
    =
    \operatorname{graph}_{(\mc V,\mc V^\#)}
    \bigl(M_{\mc V,\mc V^\#}^X(\lambda)\bigr).
\]
Consequently,
\[
    \kappa_{(\mc V,\mc V^\#)}
    \bigl(\mc K_X(\lambda)\bigr)
    =
    M_{\mc V,\mc V^\#}^X(\lambda)
    \in
    \operatorname{Hom}_{\mb C}(\mc V^\#,\mc V).
\]

\begin{prop}
The map
\(
    \mc K_X:
    \Sigma_X
    \longrightarrow
    \operatorname{Gr}_m(V_S)
\)
is holomorphic.
\end{prop}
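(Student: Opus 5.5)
Holomorphy is a local property, so the plan is to work in the graph charts \(\kappa_{(E,F)}\) of \(\operatorname{Gr}_m(V_S)\). Fix \(\lambda_0\in\Sigma_X\). By the definition of \(\Sigma_X\), there is a Lagrangian \(\mc V\) with \(\lambda_0\in\rho(\Delta_{X,\mc V})\). Since the resolvent set is open, a disk \(U\ni\lambda_0\) lies in \(\rho(\Delta_{X,\mc V})\). Choose a Lagrangian complement \(\mc V^\#\); such a complement exists for a nondegenerate skew-Hermitian form of this type, e.g.\ by taking \(\mc V^\#:=J\mc V\) for a compatible complex structure, or by the standard inductive construction. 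Lemma~\ref{lem:cauchy-data-transverse} gives \(\mc K_X(\lambda)\cap\mc V=\{0\}\) for \(\lambda\in U\), so \(\mc K_X(U)\subset\mc U_{\mc V}\). The graph identity established just before the statement then gives
\[
\kappa_{(\mc V,\mc V^\#)}\circ\mc K_X\big|_U
=
M_{\mc V,\mc V^\#}^X\big|_U .
\]

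By Lemma~\ref{lem:M-derivative-identity}, \(M_{\mc V,\mc V^\#}^X\) is holomorphic on \(\rho(\Delta_{X,\mc V})\) as a \(\operatorname{Hom}_{\mb C}(\mc V^\#,\mc V)\)-valued function. Hence the local representative of \(\mc K_X\) in the chart \(\kappa_{(\mc V,\mc V^\#)}\) is holomorphic near \(\lambda_0\).

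It remains to record that the charts \(\kappa_{(E,F)}\) form a holomorphic atlas of \(\operatorname{Gr}_m(V_S)\), i.e.\ they are the standard affine charts. It suffices to note that \(\mc U_E\) is precisely the open set where \(P_F|_{\mc L}\) is invertible. On overlaps, if a subspace is the column space of an \(\dim V_S\times m\) matrix \(\binom{A}{I}\) in an adapted basis, changing the decomposition gives a linear-fractional transition \(A\mapsto (\alpha A+\beta)(\gamma A+\delta)^{-1}\), which is holomorphic where defined. Since \(\lambda_0\) was arbitrary, \(\mc K_X\) is holomorphic on \(\Sigma_X\).

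The only point needing care is that holomorphy of \(M\) was proved for a fixed Lagrangian pair, while different points of \(\Sigma_X\) may require different pairs. This is handled exactly by the compatibility of charts in the previous step, since on overlaps the local representatives \(M_{\mc V,\mc V^\#}^X\) and \(M_{\mc W,\mc W^\#}^X\) are related by the holomorphic chart transitions. I therefore expect no genuine obstacle beyond this bookkeeping.
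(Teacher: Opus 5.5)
Your proposal is correct and follows the paper's proof essentially step for step: near $\lambda_0$ you pick $\mc{V}$ with $\lambda_0\in\rho(\Delta_{X,\mc{V}})$ and a Lagrangian complement $\mc{V}^\#$, use Lemma~\ref{lem:cauchy-data-transverse} to place $\mc{K}_X(\lambda)$ in $\mc{U}_{\mc{V}}$, identify the chart representative of $\mc{K}_X$ with $M_{\mc{V},\mc{V}^\#}^X$, and conclude from the holomorphy proved in Lemma~\ref{lem:M-derivative-identity}. Your extra remarks are correct bookkeeping that the paper takes for granted: a Lagrangian complement exists (for instance $J\mc{V}$), and the graph charts form a holomorphic atlas with linear-fractional transitions. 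Your final worry about needing different pairs at different points is also already handled, since holomorphy only has to be checked in one chart near each point.
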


\begin{proof}
Let \(\lambda_0\in\Sigma_X\). By the definition of \(\Sigma_X\), there
exists \(\mc V\in\operatorname{LGr}(V_S,\omega_S)\) such that
\(\lambda_0\in\rho(\Delta_{X,\mc V})\). Choose a Lagrangian complement
\(\mc V^\#\) of \(\mc V\).

For every \(\lambda\in\rho(\Delta_{X,\mc V})\), the previous lemma
gives \(\mc K_X(\lambda)\cap\mc V=\{0\}\). Hence
\(
    \mc K_X\bigl(\rho(\Delta_{X,\mc V})\bigr)
    \subset
    \mc U_{\mc V}.
\)
Consider the identity chart on
\(\rho(\Delta_{X,\mc V})\subset\mb C\) and the Grassmannian chart
\(
    \kappa_{(\mc V,\mc V^\#)}:
    \mc U_{\mc V}
    \longrightarrow
    \operatorname{Hom}_{\mb C}(\mc V^\#,\mc V).
\)
The corresponding local coordinate representation of \(\mc K_X\) is
\[
    \kappa_{(\mc V,\mc V^\#)}
    \circ
    \mc K_X\big|_{\rho(\Delta_{X,\mc V})}
    =
    M_{\mc V,\mc V^\#}^X.
\]
Since \(M_{\mc V,\mc V^\#}^X\) is holomorphic on
\(\rho(\Delta_{X,\mc V})\), the map \(\mc K_X\) is holomorphic at
\(\lambda_0\). Since \(\lambda_0\) was arbitrary, \(\mc K_X\) is
holomorphic on \(\Sigma_X\).
\end{proof}

\begin{defn}
The holomorphic map
\[
    \mc K_X:
    \Sigma_X
    \longrightarrow
    \operatorname{Gr}_m(V_S)
\]
is called the Weyl curve of \(X\).
\end{defn}

Differentiating the local coordinate identity established above
immediately gives the following description of the differential of
the Weyl curve.

\begin{prop}\label{prop:differential-weyl-curve}
Let \(\lambda_0\in\Sigma_X\). The holomorphic differential of the
Weyl curve at \(\lambda_0\) is the complex-linear map
\[
    d\mc K_X|_{\lambda_0}:
    T_{\lambda_0}^{1,0}\Sigma_X
    \longrightarrow
    T_{\mc K_X(\lambda_0)}^{1,0}
    \operatorname{Gr}_m(V_S).
\]
Let \((\mc V,\mc V^\#)\) be a Lagrangian pair such that
\(\lambda_0\in\rho(\Delta_{X,\mc V})\). Under the natural
identification
\[
    T_{\mc K_X(\lambda_0)}^{1,0}
    \operatorname{Gr}_m(V_S)
    \simeq
    \operatorname{Hom}_{\mb C}
    \bigl(
        \mc K_X(\lambda_0),
        V_S/\mc K_X(\lambda_0)
    \bigr),
\]
the tangent vector
\(
    d\mc K_X|_{\lambda_0}
    \bigl(
        \left.\frac{\partial}{\partial\lambda}\right|_{\lambda_0}
    \bigr)
\)
is given by
\[
\begin{aligned}
    &d\mc K_X|_{\lambda_0}
    \left(
        \left.\frac{\partial}{\partial\lambda}\right|_{\lambda_0}
    \right)
    \left(
        M_{\mc V,\mc V^\#}^X(\lambda_0)h^\#+h^\#
    \right)
    \\
    &\qquad
    =
    \left[
        \left.
        \frac{d}{d\lambda}
        M_{\mc V,\mc V^\#}^X(\lambda)h^\#
        \right|_{\lambda=\lambda_0}
    \right]
\end{aligned}
\]
for every \(h^\#\in\mc V^\#\), where the brackets denote the class in
\(V_S/\mc K_X(\lambda_0)\).

Equivalently, in the graph-coordinate chart determined by
\((\mc V,\mc V^\#)\),
\[
    d\kappa_{(\mc V,\mc V^\#)}|_{\mc K_X(\lambda_0)}
    \left(
        d\mc K_X|_{\lambda_0}
        \left(
            \left.\frac{\partial}{\partial\lambda}\right|_{\lambda_0}
        \right)
    \right)
    =
    \left.
    \frac{d}{d\lambda}
    M_{\mc V,\mc V^\#}^X(\lambda)
    \right|_{\lambda=\lambda_0}.
\]
\end{prop}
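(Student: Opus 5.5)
The plan is to reduce everything to the standard description of the holomorphic tangent space of a Grassmannian in a graph chart, and then use the local coordinate identity \(\kappa_{(\mc V,\mc V^\#)}\circ\mc K_X=M_{\mc V,\mc V^\#}^X\) on \(\rho(\Delta_{X,\mc V})\), which was established just before the proposition stating that \(\mc K_X\) is holomorphic. Since \(\mc K_X\) is holomorphic, its differential is complex linear and maps \(T^{1,0}_{\lambda_0}\Sigma_X\) into \(T^{1,0}\operatorname{Gr}_m(V_S)\). The second displayed formula, the chart version, then follows at once by the chain rule: differentiate \(\kappa_{(\mc V,\mc V^\#)}(\mc K_X(\lambda))=M^X_{\mc V,\mc V^\#}(\lambda)\) at \(\lambda_0\). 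Here \(\operatorname{Hom}_{\mb C}(\mc V^\#,\mc V)\) is a vector space, so its tangent space is canonically identified with itself. The holomorphy of \(M^X_{\mc V,\mc V^\#}\) comes from Lemma~\ref{lem:M-derivative-identity}.

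For the intrinsic formula, I will recall the identification
\[
T_L^{1,0}\operatorname{Gr}_m(V_S)\simeq\operatorname{Hom}_{\mb C}(L,V_S/L)
\]
and describe it concretely. A holomorphic curve \(L(t)\) with \(L(0)=L\) has velocity the map \(\ell\mapsto[\dot\ell(0)]\), where \(\ell(t)\in L(t)\) is any holomorphic lift with \(\ell(0)=\ell\). I need to check that this is well defined. If \(\ell_1(t)\) and \(\ell_2(t)\) are two such lifts, then \(\ell_1(t)-\ell_2(t)\in L(t)\) vanishes at \(t=0\). It can therefore be written as \(t\,k(t)\) with \(k(t)\in L(t)\) holomorphic, for instance by expressing it in a holomorphic frame of \(L(t)\). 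Its derivative at \(0\) is \(k(0)\in L\), so the class \([\dot\ell(0)]\) does not depend on the lift. I will then compute this identification in the chart \(\kappa_{(E,F)}\) with \(E=\mc V\) and \(F=\mc V^\#\). For \(L(t)=\operatorname{graph}(A(t))\), the lift \(\ell(t)=A(t)h+h\) has derivative \(\dot A(0)h\). So the identification sends \(\dot A(0)\) to the map \(A(0)h+h\mapsto[\dot A(0)h]\). This map is an isomorphism \(\operatorname{Hom}(F,E)\to\operatorname{Hom}(L,V_S/L)\), because \(E\to V_S/L\) is an isomorphism when \(E\cap L=\{0\}\).

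Next I apply this with \(A(\lambda)=M^X_{\mc V,\mc V^\#}(\lambda)\) and the lift \(\ell(\lambda)=M^X_{\mc V,\mc V^\#}(\lambda)h^\#+h^\#\). This lift equals \(\pi_S(\Gamma^X_{\mc V,\mc V^\#}(\lambda)h^\#)\), so it lies in \(\mc K_X(\lambda)\). Differentiating at \(\lambda_0\) gives exactly the asserted class \([\tfrac{d}{d\lambda}M^X_{\mc V,\mc V^\#}(\lambda)h^\#|_{\lambda_0}]\). The remaining check is that the answer does not depend on the choice of the Lagrangian pair. This is automatic, because the intrinsic description in terms of holomorphic lifts makes no reference to the chart. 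A change of pair only changes the lift, and the independence of the lift was shown above. I do not expect a real obstacle here. The only point needing care is this lift-independence (the frame argument), together with making sure that \(\mc V\) maps isomorphically onto \(V_S/\mc K_X(\lambda_0)\), which is Lemma~\ref{lem:cauchy-data-transverse}.
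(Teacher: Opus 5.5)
Your proposal is correct and follows the paper's proof. Both take the holomorphic lift \(M_{\mc V,\mc V^\#}^X(\lambda)h^\#+h^\#\in\mc K_X(\lambda)\), reduce its derivative modulo \(\mc K_X(\lambda_0)\), and get the chart formula by differentiating \(\kappa_{(\mc V,\mc V^\#)}\circ\mc K_X=M_{\mc V,\mc V^\#}^X\). Your checks of lift-independence and of the isomorphism \(\operatorname{Hom}_{\mb C}(\mc V^\#,\mc V)\to\operatorname{Hom}_{\mb C}(\mc K_X(\lambda_0),V_S/\mc K_X(\lambda_0))\) (via Lemma~\ref{lem:cauchy-data-transverse}) simply spell out what the paper calls the standard identification.
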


\begin{proof}
Fix \(h^\#\in\mc V^\#\). Near \(\lambda_0\), the vector
\(
    s_{h^\#}(\lambda)
    =
    M_{\mc V,\mc V^\#}^X(\lambda)h^\#+h^\#
\)
belongs to \(\mc K_X(\lambda)\). Differentiating at \(\lambda_0\)
gives
\[
    s_{h^\#}'(\lambda_0)
    =
    \left.
    \frac{d}{d\lambda}
    M_{\mc V,\mc V^\#}^X(\lambda)h^\#
    \right|_{\lambda=\lambda_0}.
\]
Under the standard identification of the holomorphic tangent space of
the Grassmannian with
\(
    \operatorname{Hom}_{\mb C}
    \bigl(
        \mc K_X(\lambda_0),
        V_S/\mc K_X(\lambda_0)
    \bigr),
\)
the corresponding tangent vector is obtained by taking the class of
\(s_{h^\#}'(\lambda_0)\) modulo \(\mc K_X(\lambda_0)\). This proves
the first formula. The coordinate formula follows by differentiating
\(
    \kappa_{(\mc V,\mc V^\#)}\circ\mc K_X
    =
    M_{\mc V,\mc V^\#}^X
\)
at \(\lambda_0\).
\end{proof}

We now study the transition between the local coordinate
representations of the Weyl curve determined by two Lagrangian pairs
\((\mc V,\mc V^\#)\) and \((\mc W,\mc W^\#)\) in
\((V_S,\omega_S)\).

Let \(V,W,E,F\) be subspaces of \(V_S\) such that
\(
    V_S=V\oplus W=E\oplus F.
\)
Every linear map \(T:V_S\to V_S\) admits a block representation
relative to the source decomposition \(V_S=V\oplus W\) and the target
decomposition \(V_S=E\oplus F\). Namely, there exist linear maps
\(
    T_{11}:V\to E,
    T_{12}:W\to E,
    T_{21}:V\to F,
\)
and \(T_{22}:W\to F\) such that, for \(v\in V\) and \(w\in W\),
\[
    T(v+w)
    =
    \bigl(T_{11}v+T_{12}w\bigr)
    +
    \bigl(T_{21}v+T_{22}w\bigr),
\]
where the first term belongs to \(E\) and the second belongs to \(F\).
We denote this block representation by
\[
    [T]_{(V,W)}^{(E,F)}
    =
    \begin{bmatrix}
        T_{11} & T_{12}\\
        T_{21} & T_{22}
    \end{bmatrix}.
\]

Assume that
\(
    \dim_{\mb C}V
    =
    \dim_{\mb C}W
    =
    \dim_{\mb C}E
    =
    \dim_{\mb C}F
    =
    m.
\)
Consider the transition map
\(
    \kappa_{(E,F)}
    \circ
    \kappa_{(V,W)}^{-1}
\)
between the graph-coordinate charts determined by the decompositions
\(V_S=V\oplus W\) and \(V_S=E\oplus F\).
Its domain is the open subset of
\(\operatorname{Hom}_{\mb C}(W,V)\) corresponding to
\(\mc U_V\cap\mc U_E\), and its values lie in
\(\operatorname{Hom}_{\mb C}(F,E)\).

Write
\[
    [\id_{V_S}]_{(V,W)}^{(E,F)}
    =
    \begin{bmatrix}
        A&B\\
        C&D
    \end{bmatrix}.
\]
The standard transition formula for graph coordinates is
\[
    \bigl(
        \kappa_{(E,F)}
        \circ
        \kappa_{(V,W)}^{-1}
    \bigr)(L)
    =
    (AL+B)(CL+D)^{-1}.
\]
Moreover,
\[
    \kappa_{(V,W)}
    \bigl(\mc U_V\cap\mc U_E\bigr)
    =
    \left\{
        L\in\operatorname{Hom}_{\mb C}(W,V):
        CL+D:W\longrightarrow F
        \text{ is invertible}
    \right\}.
\]

We therefore obtain the following transformation law for the local
coordinate representations of the Weyl curve.

\begin{thm}\label{thm:weyl-transformation-law}
Let \((\mc V,\mc V^\#)\) and \((\mc W,\mc W^\#)\) be Lagrangian
pairs. Let
\[
    [\id_{V_S}]_{(\mc V,\mc V^\#)}^{(\mc W,\mc W^\#)}
    =
    \begin{bmatrix}
        A&B\\
        C&D
    \end{bmatrix}
\]
be the block representation of the identity map relative to the two
Lagrangian decompositions. If
\(
    \lambda
    \in
    \rho(\Delta_{X,\mc V})
    \cap
    \rho(\Delta_{X,\mc W}),
\)
then
\(
    C M_{\mc V,\mc V^\#}^X(\lambda)+D:
    \mc V^\#
    \longrightarrow
    \mc W^\#
\)
is invertible, and
\[
    M_{\mc W,\mc W^\#}^X(\lambda)
    =
    \bigl(
        A M_{\mc V,\mc V^\#}^X(\lambda)+B
    \bigr)
    \bigl(
        C M_{\mc V,\mc V^\#}^X(\lambda)+D
    \bigr)^{-1}.
\]
\end{thm}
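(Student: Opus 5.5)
The plan is to read the statement directly off the graph-coordinate transition formula stated just before the theorem. We apply it with \((V,W)=(\mc V,\mc V^\#)\) and \((E,F)=(\mc W,\mc W^\#)\), and to the point \(\mc K_X(\lambda)\) of the Grassmannian. For \(\lambda\in\rho(\Delta_{X,\mc V})\cap\rho(\Delta_{X,\mc W})\), Lemma~\ref{lem:cauchy-data-transverse} applied to both \(\mc V\) and \(\mc W\) gives \(\mc K_X(\lambda)\cap\mc V=\{0\}=\mc K_X(\lambda)\cap\mc W\). Hence \(\mc K_X(\lambda)\in\mc U_{\mc V}\cap\mc U_{\mc W}\). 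The identities established after Lemma~\ref{lem:cauchy-data-transverse} give
\[
\kappa_{(\mc V,\mc V^\#)}(\mc K_X(\lambda))=M_{\mc V,\mc V^\#}^X(\lambda),
\qquad
\kappa_{(\mc W,\mc W^\#)}(\mc K_X(\lambda))=M_{\mc W,\mc W^\#}^X(\lambda).
\]
Thus \(M_{\mc V,\mc V^\#}^X(\lambda)\) lies in \(\kappa_{(\mc V,\mc V^\#)}(\mc U_{\mc V}\cap\mc U_{\mc W})\), and the characterization of that set yields invertibility of \(CM_{\mc V,\mc V^\#}^X(\lambda)+D\). Applying the transition map then gives the displayed formula.

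Since the transition formula is only quoted as standard, I would also verify it directly in this case so the proof is self-contained. Fix \(h\in\mc V^\#\) and write \(M:=M_{\mc V,\mc V^\#}^X(\lambda)\). The vector \(Mh+h\) lies in \(\mc K_X(\lambda)\). By the definition of the block representation, its \(\mc W\)-component is \((AM+B)h\) and its \(\mc W^\#\)-component is \((CM+D)h\).

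\textbf{Injectivity.} Suppose \((CM+D)h=0\). Then \(Mh+h\in\mc K_X(\lambda)\cap\mc W=\{0\}\). Applying \(P_{\mc V^\#}\) gives \(h=0\). Since \(\dim\mc V^\#=\dim\mc W^\#=m\), the map \(CM+D\) is an isomorphism.

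\textbf{Formula.} For \(g\in\mc W^\#\), set \(h:=(CM+D)^{-1}g\). The element \(Mh+h\in\mc K_X(\lambda)\) has \(\mc W^\#\)-component \(g\) and \(\mc W\)-component \((AM+B)(CM+D)^{-1}g\). On the other hand, \(M_{\mc W,\mc W^\#}^X(\lambda)g+g\) is the unique element of \(\mc K_X(\lambda)\) whose \(\mc W^\#\)-component is \(g\). Uniqueness holds because \(P_{\mc W^\#}|_{\mc K_X(\lambda)}\) is injective, since \(\mc K_X(\lambda)\cap\mc W=\{0\}\). Comparing \(\mc W\)-components gives \(M_{\mc W,\mc W^\#}^X(\lambda)=(AM+B)(CM+D)^{-1}\).

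There is no real obstacle here. The only point needing care is that the graph description of \(\mc K_X(\lambda)\) in the \((\mc W,\mc W^\#)\) chart uses the \(\mc W^\#\)-component as the parameter. This matches \(\widetilde\pi_{\mc W^\#}(\Gamma^X_{\mc W,\mc W^\#}(\lambda)g)=g\), which requires \(\lambda\in\rho(\Delta_{X,\mc W})\) and is exactly why both resolvent conditions are assumed.
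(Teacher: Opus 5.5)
Your proof is correct and is essentially the paper's argument. The paper also takes \(u_\lambda=\Gamma^X_{\mc V,\mc V^\#}(\lambda)h^\#\) and reads off the \(\mc W\)- and \(\mc W^\#\)-components of its boundary data as \((AM+B)h^\#\) and \((CM+D)h^\#\). It then obtains injectivity and the identity \(M^X_{\mc W,\mc W^\#}(\lambda)(CM+D)=AM+B\) from a uniqueness statement, with only one difference from your route. The paper applies uniqueness to formal solutions: Proposition~\ref{prop:Gamma-characterization} gives \(u_\lambda=\Gamma^X_{\mc W,\mc W^\#}(\lambda)k^\#\). You apply it to boundary data in \(V_S\), using \(\mc K_X(\lambda)\cap\mc W=\{0\}\) from Lemma~\ref{lem:cauchy-data-transverse}, which rests on the same resolvent-set argument.
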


\begin{proof}
We argue directly from the definition of the Weyl function.

Let \(h^\#\in\mc V^\#\), and set
\(
    u_\lambda
    =
    \Gamma_{\mc V,\mc V^\#}^X(\lambda)h^\#.
\)
Then
\(\widetilde\pi_{\mc V^\#}(u_\lambda)=h^\#\), and
\[
    \pi_S(u_\lambda)
    =
    M_{\mc V,\mc V^\#}^X(\lambda)h^\#+h^\#.
\]
With respect to the decomposition
\(V_S=\mc W\oplus\mc W^\#\), its boundary components are
\[
\begin{aligned}
    \widetilde\pi_{\mc W}(u_\lambda)
    &=
    \bigl(
        A M_{\mc V,\mc V^\#}^X(\lambda)+B
    \bigr)h^\#,
    \\
    \widetilde\pi_{\mc W^\#}(u_\lambda)
    &=
    \bigl(
        C M_{\mc V,\mc V^\#}^X(\lambda)+D
    \bigr)h^\#.
\end{aligned}
\]
Set
\(
    k^\#
    =
    \bigl(
        C M_{\mc V,\mc V^\#}^X(\lambda)+D
    \bigr)h^\#.
\)
Since \(\lambda\in\rho(\Delta_{X,\mc W})\), the defining property of
the Poisson operator implies that
\(
    u_\lambda
    =
    \Gamma_{\mc W,\mc W^\#}^X(\lambda)k^\#.
\)

If \(k^\#=0\), then \(u_\lambda=0\), and hence
\(h^\#=\widetilde\pi_{\mc V^\#}(u_\lambda)=0\). Therefore
\(C M_{\mc V,\mc V^\#}^X(\lambda)+D\) is injective. Since
\(\dim_{\mb C}\mc V^\#=\dim_{\mb C}\mc W^\#=m\), it is invertible.

By the definition of the Weyl function,
\[
    M_{\mc W,\mc W^\#}^X(\lambda)k^\#
    =
    \bigl(
        A M_{\mc V,\mc V^\#}^X(\lambda)+B
    \bigr)h^\#.
\]
Since this holds for every \(h^\#\in\mc V^\#\), we obtain
\[
    M_{\mc W,\mc W^\#}^X(\lambda)
    \bigl(
        C M_{\mc V,\mc V^\#}^X(\lambda)+D
    \bigr)
    =
    A M_{\mc V,\mc V^\#}^X(\lambda)+B.
\]
The asserted transformation formula now follows from the
invertibility established above.
\end{proof}

Set
\(
    \Sigma_X^{\mb R}
    :=
    \Sigma_X\cap\mb R.
\)
For every \(\lambda\in\Sigma_X^{\mb R}\),
Lemma~\ref{lem:cauchy-data-orthogonal-complement} implies that
\(\mc K_X(\lambda)\) is Lagrangian. Hence the restriction
\[
    \mc K_X^{\mb R}
    :=
    \mc K_X|_{\Sigma_X^{\mb R}}:
    \Sigma_X^{\mb R}
    \longrightarrow
    \operatorname{LGr}(V_S,\omega_S)
\]
is smooth. We call \(\mc K_X^{\mb R}\) the real Weyl curve. We now
introduce local graph coordinates on
\(\operatorname{LGr}(V_S,\omega_S)\) and prove that the restriction of
the real Weyl curve to each connected component of
\(\Sigma_X^{\mb R}\) is regular.

We equip \(\operatorname{LGr}(V_S,\omega_S)\) with the subspace
topology induced from \(\operatorname{Gr}_m(V_S)\). Let
\((\mc V,\mc V^\#)\) be a Lagrangian pair, and define
\(
    \mc U_{\mc V}^{\operatorname{Lag}}
    :=
    \mc U_{\mc V}
    \cap
    \operatorname{LGr}(V_S,\omega_S).
\)
Since \(\mc U_{\mc V}\) is open in
\(\operatorname{Gr}_m(V_S)\), the set
\(\mc U_{\mc V}^{\operatorname{Lag}}\) is open in
\(\operatorname{LGr}(V_S,\omega_S)\) with respect to the induced
topology. Equivalently,
\[
    \mc U_{\mc V}^{\operatorname{Lag}}
    =
    \left\{
        \mc L\in\operatorname{LGr}(V_S,\omega_S):
        \mc L\cap\mc V=\{0\}
    \right\}.
\]

For \(M\in\operatorname{Hom}_{\mb C}(\mc V^\#,\mc V)\), its
symplectic adjoint
\(M^\dagger:\mc V^\#\to\mc V\) is defined by
\[
    \omega_S(Mh^\#,k^\#)
    =
    \omega_S(h^\#,M^\dagger k^\#),
    \qquad
    h^\#,k^\#\in\mc V^\#.
\]
The map \(M^\dagger\) is uniquely determined by the nondegeneracy of
the pairing between \(\mc V\) and \(\mc V^\#\) induced by
\(\omega_S\). Moreover,
\[
    \omega_S
    \bigl(
        Mh^\#+h^\#,
        Mk^\#+k^\#
    \bigr)
    =
    \omega_S
    \bigl(
        h^\#,
        (M^\dagger+M)k^\#
    \bigr).
\]
Hence
\(\operatorname{graph}_{(\mc V,\mc V^\#)}(M)\) is Lagrangian if and
only if \(M^\dagger=-M\). Set
\[
    \operatorname{Herm}_{\omega_S}(\mc V^\#,\mc V)
    :=
    \left\{
        M\in\operatorname{Hom}_{\mb C}(\mc V^\#,\mc V):
        M^\dagger=-M
    \right\}.
\]
This is a real vector subspace of
\(\operatorname{Hom}_{\mb C}(\mc V^\#,\mc V)\). The Grassmannian
graph chart determined by \(V_S=\mc V\oplus\mc V^\#\) therefore
restricts to the real coordinate chart
\[
    \kappa_{(\mc V,\mc V^\#)}^{\operatorname{Lag}}:
    \mc U_{\mc V}^{\operatorname{Lag}}
    \longrightarrow
    \operatorname{Herm}_{\omega_S}(\mc V^\#,\mc V),
\]
given by
\(
    \kappa_{(\mc V,\mc V^\#)}^{\operatorname{Lag}}
    \left(
        \operatorname{graph}_{(\mc V,\mc V^\#)}(M)
    \right)
    =
    M.
\)

For every
\(\lambda\in\rho(\Delta_{X,\mc V})\cap\Sigma_X^{\mb R}\), we have
\(
    \mc K_X^{\mb R}(\lambda)
    \in
    \mc U_{\mc V}^{\operatorname{Lag}}
\)
and
\(
    \kappa_{(\mc V,\mc V^\#)}^{\operatorname{Lag}}
    \bigl(\mc K_X^{\mb R}(\lambda)\bigr)
    =
    M_{\mc V,\mc V^\#}^X(\lambda).
\)
Thus the Weyl function is the local coordinate representation of the
real Weyl curve in the Lagrangian graph chart determined by
\((\mc V,\mc V^\#)\). In particular,
\[
    \bigl(M_{\mc V,\mc V^\#}^X(\lambda)\bigr)^\dagger
    =
    -M_{\mc V,\mc V^\#}^X(\lambda).
\]

Let
\(
    \mc L:I\to\operatorname{LGr}(V_S,\omega_S)
\)
be a smooth curve. Its tangent vector at \(\lambda\) is naturally
identified with a Hermitian form
\(\mathfrak q_\lambda\) on \(\mc L(\lambda)\), called the tangent form
of the curve. More explicitly, suppose that, relative to a fixed
Lagrangian pair \((\mc V,\mc V^\#)\), the curve is represented by
\(
    \mc L(\lambda)
    =
    \left\{
        M(\lambda)h^\#+h^\#:
        h^\#\in\mc V^\#
    \right\}.
\)
Then
\[
    \mathfrak q_\lambda\bigl(
        M(\lambda)h^\#+h^\#,
        M(\lambda)k^\#+k^\#
    \bigr)
    =
    \omega_S\bigl(
        \dot M(\lambda)h^\#,
        k^\#
    \bigr)
\]
for \(h^\#,k^\#\in\mc V^\#\). The curve is called positive if
\(\mathfrak q_\lambda(v,v)>0\) for every
\(0\neq v\in\mc L(\lambda)\) and every \(\lambda\in I\).

\begin{prop}[Positivity of the real Weyl curve]
\label{prop:positive-weyl-curve}
Let \((\mc V,\mc V^\#)\) be a Lagrangian pair, and let
\(I\subset\rho(\Delta_{X,\mc V})\cap\Sigma_X^{\mb R}\) be an open
interval. Then the restriction
\[
    \mc K_X^{\mb R}|_I:
    I
    \longrightarrow
    \operatorname{LGr}(V_S,\omega_S)
\]
is a positive curve. Its tangent form satisfies
\[
\mathfrak q_\lambda\bigl(
    M_{\mc V,\mc V^\#}^X(\lambda)h^\#+h^\#,
    M_{\mc V,\mc V^\#}^X(\lambda)k^\#+k^\#
\bigr)
=
\left\langle
    \Gamma_{\mc V,\mc V^\#}^X(\lambda)h^\#,
    \Gamma_{\mc V,\mc V^\#}^X(\lambda)k^\#
\right\rangle_{L^2(X')}
\]
for \(h^\#,k^\#\in\mc V^\#\). In particular,
\[
\mathfrak q_\lambda\bigl(
    M_{\mc V,\mc V^\#}^X(\lambda)h^\#+h^\#,
    M_{\mc V,\mc V^\#}^X(\lambda)h^\#+h^\#
\bigr)
=
\left\|
    \Gamma_{\mc V,\mc V^\#}^X(\lambda)h^\#
\right\|_{L^2(X')}^2
>0
\]
for every \(h^\#\neq0\).
\end{prop}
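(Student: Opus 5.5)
The plan is to combine the definition of the tangent form with the derivative identity of Lemma~\ref{lem:M-derivative-identity}, evaluated at real \(\lambda\).

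First, fix \(\lambda\in I\). By the discussion preceding the proposition, on \(I\subset\rho(\Delta_{X,\mc V})\cap\mb R\) the real Weyl curve is represented in the Lagrangian graph chart of \((\mc V,\mc V^\#)\) by
\[
    \mc K_X^{\mb R}(\lambda)
    =
    \left\{
        M_{\mc V,\mc V^\#}^X(\lambda)h^\#+h^\#:
        h^\#\in\mc V^\#
    \right\}.
\]
Hence \(M(\lambda)=M_{\mc V,\mc V^\#}^X(\lambda)\) in the definition of \(\mathfrak q_\lambda\). This function is holomorphic by Lemma~\ref{lem:M-derivative-identity}, so its restriction to \(I\) is smooth, and its real derivative \(\dot M(\lambda)\) coincides with the complex derivative \(\frac{d}{d\lambda}M_{\mc V,\mc V^\#}^X(\lambda)\). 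By the definition of the tangent form,
\[
\mathfrak q_\lambda\bigl(
    M_{\mc V,\mc V^\#}^X(\lambda)h^\#+h^\#,
    M_{\mc V,\mc V^\#}^X(\lambda)k^\#+k^\#
\bigr)
=
\omega_S\Bigl(
    \frac{d}{d\lambda}M_{\mc V,\mc V^\#}^X(\lambda)h^\#,
    k^\#
\Bigr).
\]

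Next, apply Lemma~\ref{lem:M-derivative-identity} with \(f=h^\#\) and \(g=k^\#\). Since \(\lambda\) is real, \(\overline{\lambda}=\lambda\), so the right-hand side of that lemma becomes
\[
\bigl\langle
    \Gamma_{\mc V,\mc V^\#}^X(\lambda)h^\#,
    \Gamma_{\mc V,\mc V^\#}^X(\lambda)k^\#
\bigr\rangle_{L^2(X')}.
\]
This is exactly the asserted identity for the tangent form.

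For positivity, take \(k^\#=h^\#\). The form value is then \(\|\Gamma_{\mc V,\mc V^\#}^X(\lambda)h^\#\|^2_{L^2(X')}\geq0\). By Proposition~\ref{prop:Gamma-characterization}, \(\Gamma_{\mc V,\mc V^\#}^X(\lambda)\) is injective, since \(\widetilde\pi_{\mc V^\#}\circ\Gamma_{\mc V,\mc V^\#}^X(\lambda)=\id_{\mc V^\#}\). Therefore \(h^\#\neq0\) gives a nonzero element of \(L^2(X')\), and the value is strictly positive. Every nonzero \(v\in\mc K_X^{\mb R}(\lambda)\) has the form \(M_{\mc V,\mc V^\#}^X(\lambda)h^\#+h^\#\) with \(h^\#=P_{\mc V^\#}v\neq0\), so \(\mathfrak q_\lambda(v,v)>0\). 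This is positivity of the curve on \(I\).

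The main point requiring care is checking that the convention in the definition of \(\mathfrak q_\lambda\) matches Lemma~\ref{lem:M-derivative-identity}. Both use \(\omega_S(\dot M h^\#,k^\#)\) with the same ordering of arguments and the same sesquilinearity, so no sign or conjugation discrepancy arises. It is also worth noting that the result is a well-defined Hermitian form on \(\mc K_X(\lambda)\), which follows from the inner-product expression.
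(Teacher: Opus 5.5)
Your proposal is correct and follows essentially the same route as the paper: you read off the tangent form as \(\omega_S\bigl(\frac{d}{d\lambda}M_{\mc V,\mc V^\#}^X(\lambda)h^\#,k^\#\bigr)\), use Lemma~\ref{lem:M-derivative-identity} with \(\overline{\lambda}=\lambda\) to turn it into the \(L^2(X')\)-pairing of the Poisson solutions, and conclude strict positivity from the injectivity of \(\Gamma_{\mc V,\mc V^\#}^X(\lambda)\). Your extra remarks only spell out details the paper leaves implicit: that the real derivative agrees with the complex one, and that each nonzero \(v\in\mc K_X^{\mb R}(\lambda)\) has \(h^\#=P_{\mc V^\#}v\neq0\).
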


\begin{proof}
By the definition of the tangent form,
\[
\mathfrak q_\lambda\bigl(
    M_{\mc V,\mc V^\#}^X(\lambda)h^\#+h^\#,
    M_{\mc V,\mc V^\#}^X(\lambda)k^\#+k^\#
\bigr)
=
\omega_S\!\left(
    \frac{d}{d\lambda}
    M_{\mc V,\mc V^\#}^X(\lambda)h^\#,
    k^\#
\right).
\]
By Lemma~\ref{lem:M-derivative-identity},
\[
\omega_S\!\left(
    \frac{d}{d\lambda}
    M_{\mc V,\mc V^\#}^X(\lambda)h^\#,
    k^\#
\right)
=
\left\langle
    \Gamma_{\mc V,\mc V^\#}^X(\lambda)h^\#,
    \Gamma_{\mc V,\mc V^\#}^X(\lambda)k^\#
\right\rangle_{L^2(X')}.
\]
The desired formula follows immediately. Taking
\(k^\#=h^\#\neq0\), and using the fact that
\(\Gamma_{\mc V,\mc V^\#}^X(\lambda)\) is an isomorphism, gives
positivity.
\end{proof}

We next pull back the tautological bundle on
\(\operatorname{Gr}_m(V_S)\) along the Weyl curve. Scott and
Wojciechowski studied determinant lines over Grassmannians of
self-adjoint elliptic boundary conditions and related their canonical
sections to zeta determinants and boundary Fredholm determinants
\cite{ScottWojciechowski1997,ScottWojciechowski2000}; relative zeta
metrics on determinant lines were further considered in
\cite{ScottRelativeZeta}. A related use of the determinant of the
tautological bundle in connection with Schubert divisors appears in
Wang's abstract setting \cite[Section~10.1]{WangWeylCurve}. The
identification made here is specific to the present conic problem: the
base is the spectral-parameter curve \(\mc K_X\), the boundary
determinants arising from the resolvent trace identity become
transition functions of its pullback determinant line, and the
corrected zeta determinants become twisted local representatives.

We recall the local bundle coordinates only to fix the direction of
these transition functions and to identify them precisely with
\(d_{\mc V,\mc W}^X\).

Let \(\mc E\to\operatorname{Gr}_m(V_S)\) be the tautological vector
bundle, whose fiber over
\(\mc L\in\operatorname{Gr}_m(V_S)\) is \(\mc L\) itself. We first
describe its local trivializations and transition functions.

Let \(E\in\operatorname{Gr}_m(V_S)\), and choose
\(F\in\operatorname{Gr}_m(V_S)\) such that \(V_S=E\oplus F\). Recall
that the graph-coordinate chart
\(\kappa_{(E,F)}:\mc U_E\to\operatorname{Hom}_{\mb C}(F,E)\) is
defined on
\(\mc U_E=\{\mc L\in\operatorname{Gr}_m(V_S):\mc L\cap E=\{0\}\}\).
For \(\mc L\in\mc U_E\), write
\(A_{\mc L}=\kappa_{(E,F)}(\mc L)\), so that
\(\mc L=\{A_{\mc L}h+h:h\in F\}\).

Define
\[
    \tau_{(E,F)}:
    \mc U_E\times F
    \longrightarrow
    \mc E|_{\mc U_E},
    \qquad
    \tau_{(E,F)}(\mc L,h)
    =
    (\mc L,A_{\mc L}h+h).
\]
For each \(\mc L\in\mc U_E\), the map
\(h\mapsto A_{\mc L}h+h\) is a linear isomorphism from \(F\) onto
\(\mc L\). Hence \(\tau_{(E,F)}\) is a holomorphic local
trivialization of \(\mc E\) over \(\mc U_E\). Its inverse is given by
\(\tau_{(E,F)}^{-1}(\mc L,\xi)=(\mc L,P_F\xi)\), where
\(P_F:V_S\to F\) is the projection associated with
\(V_S=E\oplus F\).
Now suppose that
\(V_S=V\oplus W=E\oplus F\), and write
\[
    [\id_{V_S}]_{(V,W)}^{(E,F)}
    =
    \begin{bmatrix}
        A&B\\
        C&D
    \end{bmatrix}.
\]
Let \(\mc L\in\mc U_V\cap\mc U_E\), and set
\(L=\kappa_{(V,W)}(\mc L)\). Since
\(\mc L=\{Lh+h:h\in W\}\), the \(F\)-component of \(Lh+h\) is
\((CL+D)h\). Therefore
\[
    \bigl(
        \tau_{(E,F)}^{-1}
        \circ
        \tau_{(V,W)}
    \bigr)(\mc L,h)
    =
    \bigl(\mc L,(CL+D)h\bigr).
\]
Thus the transition function of the tautological vector bundle
\(\mc E\) on \(\mc U_V\cap\mc U_E\) is
\[
    g_{(E,F),(V,W)}:
    \mc U_V\cap\mc U_E
    \longrightarrow
    \operatorname{Iso}_{\mb C}(W,F),
\]
given by
\(
    g_{(E,F),(V,W)}(\mc L)
    =
    C\,\kappa_{(V,W)}(\mc L)+D.
\)
The determinant line bundle of \(\mc E\) is
\(\operatorname{Det}(\mc E):=\bigwedge^m\mc E\), whose fiber over
\(\mc L\in\operatorname{Gr}_m(V_S)\) is
\(\operatorname{Det}(\mc L):=\bigwedge^m\mc L\). Its transition
functions are obtained by taking the top exterior powers of the
transition functions of \(\mc E\). Thus the transition function of
\(\operatorname{Det}(\mc E)\) on \(\mc U_V\cap\mc U_E\) is
\[
    \bigwedge\nolimits^m g_{(E,F),(V,W)}:
    \mc U_V\cap\mc U_E
    \longrightarrow
    \operatorname{Iso}_{\mb C}
    \left(
        \bigwedge\nolimits^m W,
        \bigwedge\nolimits^m F
    \right),
\]
given by
\[
    \left(
        \bigwedge\nolimits^m g_{(E,F),(V,W)}
    \right)(\mc L)
    =
    \bigwedge\nolimits^m
    \left(
        C\,\kappa_{(V,W)}(\mc L)+D
    \right).
\]
After choosing nonzero elements of
\(\bigwedge^m W\) and \(\bigwedge^m F\), this transition function is
represented by the scalar
\(
    \det\!\left(
        C\,\kappa_{(V,W)}(\mc L)+D
    \right).
\)

Pulling back \(\mc E\) along the Weyl curve gives the holomorphic
vector bundle
\(
    \mc E_X:=\mc K_X^*\mc E\to\Sigma_X,
\)
whose fiber at \(\lambda\in\Sigma_X\) is naturally identified with
the boundary-data subspace
\(\mc K_X(\lambda)\subset V_S\).
Since the determinant construction commutes naturally with pullback,
there is a canonical isomorphism
\[
    \operatorname{Det}(\mc E_X)
    =
    \operatorname{Det}(\mc K_X^*\mc E)
    \simeq
    \mc K_X^*\operatorname{Det}(\mc E).
\]
Consequently, the fiber of \(\operatorname{Det}(\mc E_X)\) at
\(\lambda\) is \(\bigwedge^m\mc K_X(\lambda)\).

Whenever several Lagrangian pairs are considered simultaneously, we
choose once and for all a nonzero frame of
\(\bigwedge^m\mc V^\#\) for each pair
\((\mc V,\mc V^\#)\), and use these fixed frames in every pairwise
determinant. With this convention, the resulting scalar transition
functions satisfy the cocycle identities exactly.

Let \((\mc V,\mc V^\#)\) be a Lagrangian pair. Since
\(
    \mc K_X\bigl(\rho(\Delta_{X,\mc V})\bigr)
    \subset\mc U_{\mc V},
\)
the local trivialization
\(\tau_{(\mc V,\mc V^\#)}\) of \(\mc E\) pulls back to the local
trivialization
\[
    \tau_{\mc V,\mc V^\#}^X:
    \rho(\Delta_{X,\mc V})\times\mc V^\#
    \longrightarrow
    \mc E_X|_{\rho(\Delta_{X,\mc V})},
\]
given by
\(
    \tau_{\mc V,\mc V^\#}^X(\lambda,h^\#)
    =
    \bigl(
        \lambda,
        M_{\mc V,\mc V^\#}^X(\lambda)h^\#+h^\#
    \bigr).
\)
In particular, for every fixed \(h^\#\in\mc V^\#\), the map
\(
    s_{h^\#}(\lambda)
    =
    M_{\mc V,\mc V^\#}^X(\lambda)h^\#+h^\#
\)
defines a local holomorphic section of \(\mc E_X\).

Let \((\mc W,\mc W^\#)\) be another Lagrangian pair, and write
\[
    [\id_{V_S}]_{(\mc V,\mc V^\#)}^{(\mc W,\mc W^\#)}
    =
    \begin{bmatrix}
        A&B\\
        C&D
    \end{bmatrix}.
\]
On
\(
    \rho(\Delta_{X,\mc V})
    \cap
    \rho(\Delta_{X,\mc W}),
\)
the transition function of \(\mc E_X\) from the
\((\mc V,\mc V^\#)\)-trivialization to the
\((\mc W,\mc W^\#)\)-trivialization is
\[
    g_{(\mc W,\mc W^\#),(\mc V,\mc V^\#)}^X:
    \rho(\Delta_{X,\mc V})
    \cap
    \rho(\Delta_{X,\mc W})
    \longrightarrow
    \operatorname{Iso}_{\mb C}(\mc V^\#,\mc W^\#),
\]
given by
\(
    g_{(\mc W,\mc W^\#),(\mc V,\mc V^\#)}^X(\lambda)
    =
    C M_{\mc V,\mc V^\#}^X(\lambda)+D.
\)
Taking the top exterior power gives the transition function of
\(\operatorname{Det}(\mc E_X)\):
\[
    \bigwedge\nolimits^m
    g_{(\mc W,\mc W^\#),(\mc V,\mc V^\#)}^X(\lambda)
    =
    \bigwedge\nolimits^m
    \left(
        C M_{\mc V,\mc V^\#}^X(\lambda)+D
    \right).
\]
After choosing nonzero elements of
\(\bigwedge^m\mc V^\#\) and \(\bigwedge^m\mc W^\#\) as local frames,
the corresponding scalar transition function from the
\((\mc V,\mc V^\#)\)-trivialization to the
\((\mc W,\mc W^\#)\)-trivialization is
\[
    g_{(\mc W,\mc W^\#),(\mc V,\mc V^\#)}
    ^{\operatorname{Det},X}(\lambda)
    =
    \det\!\left(
        C M_{\mc V,\mc V^\#}^X(\lambda)+D
    \right).
\]

On the other hand, the transition map in the opposite direction is
\[
\begin{aligned}
    g_{(\mc V,\mc V^\#),(\mc W,\mc W^\#)}^X(\lambda)
    &=
    \left(
        g_{(\mc W,\mc W^\#),(\mc V,\mc V^\#)}^X(\lambda)
    \right)^{-1} \\
    &=
    C_{\mc V}^{\dagger}
    M_{\mc W,\mc W^\#}^X(\lambda)
    +
    A_{\mc V}^{\dagger}.
\end{aligned}
\]
Therefore its scalar determinant is
\[
\begin{aligned}
    g_{(\mc V,\mc V^\#),(\mc W,\mc W^\#)}
    ^{\operatorname{Det},X}(\lambda)
    &=
    \det\!\left(
        C_{\mc V}^{\dagger}
        M_{\mc W,\mc W^\#}^X(\lambda)
        +
        A_{\mc V}^{\dagger}
    \right) \\
    &=
    d_{(\mc V,\mc V^\#),(\mc W,\mc W^\#)}^X(\lambda).
\end{aligned}
\]
Thus the boundary determinant \(d_{\mc V,\mc W}^X\) is precisely the
scalar transition function of the pullback determinant line bundle
\(\operatorname{Det}(\mc E_X)\) from the
\((\mc W,\mc W^\#)\)-trivialization to the
\((\mc V,\mc V^\#)\)-trivialization.

We now restrict the preceding construction to the real Weyl curve.
Set
\(
    \mc E_X^{\mb R}
    :=
    \mc E_X|_{\Sigma_X^{\mb R}}.
\)
Equivalently,
\(
    \mc E_X^{\mb R}
    =
    (\mc K_X^{\mb R})^*
    \bigl(
        \mc E|_{\operatorname{LGr}(V_S,\omega_S)}
    \bigr).
\)
Its fiber at
\(\mu\in\Sigma_X^{\mb R}\) is the Lagrangian boundary-data subspace
\(\mc K_X^{\mb R}(\mu)\). The associated determinant line bundle is
\(
    \operatorname{Det}(\mc E_X^{\mb R})
    =
    \operatorname{Det}(\mc E_X)|_{\Sigma_X^{\mb R}}.
\)
For two Lagrangian pairs
\((\mc V,\mc V^\#)\) and \((\mc W,\mc W^\#)\), the scalar transition
function from the \((\mc W,\mc W^\#)\)-trivialization to the
\((\mc V,\mc V^\#)\)-trivialization on
\(
    \rho(\Delta_{X,\mc V})
    \cap
    \rho(\Delta_{X,\mc W})
    \cap
    \mb R
\)
is \(d_{\mc V,\mc W}^X(\mu)\). Thus the boundary determinants
appearing in the comparison formula are naturally interpreted as the
transition functions of the determinant line bundle induced by the
real Weyl curve.

Let \(I\) be a connected component of \(\Sigma_X^{\mb R}\). Fix a
collection \(\mathfrak A\) of Lagrangian pairs such that the nonempty
sets below cover \(I\), and suppose that
Assumption~\ref{ass:zeta-regularity} holds for every pair of
realizations indexed by \(\mathfrak A\) on each connected component
of their overlap. For \((\mc V,\mc V^\#)\in\mathfrak A\), set
\(
    I_{\mc V}:=I\cap\rho(\Delta_{X,\mc V}).
\)
The nonempty sets \(I_{\mc V}\) form an open cover
\(
\mathfrak U=
\{I_{\mc V}:(\mc V,\mc V^\#)\in\mathfrak A,\
I_{\mc V}\neq\varnothing\}
\)
of \(I\).
For \(\mu\in I_{\mc V}\), define the individual correction factor
\[
    E_{\mc V}^X(\mu)
    :=
    \prod_{\lambda_\ell^{\mc V}<\mu}
    (\mu-\lambda_\ell^{\mc V})^{m(\lambda_\ell^{\mc V})}.
\]
The product is finite, and an empty product is understood to be
\(1\). On \(I_{\mc V}\cap I_{\mc W}\), one then has
\(
    E_{\mc V,\mc W}^X
    =
    E_{\mc V}^X/E_{\mc W}^X.
\)

For pairs in \(\mathfrak A\), on every nonempty overlap
\(I_{\mc V}\cap I_{\mc W}\),
Theorem~\ref{thm:shifted-comparison-formula}, applied on each
connected component of the overlap, gives
\[
    \frac{
        \operatorname{Det}_{\zeta}
        (\Delta_{X,\mc V}-\mu)
    }{
        \operatorname{Det}_{\zeta}
        (\Delta_{X,\mc W}-\mu)
    }
    =
    C_{\mc V,\mc W}(\mu)
    \frac{E_{\mc W}^X(\mu)}{E_{\mc V}^X(\mu)}
    d_{\mc V,\mc W}^X(\mu),
\]
where \(C_{\mc V,\mc W}\) is a locally constant
\(\mb C^\times\)-valued function on the overlap.

Because the fixed determinant frames specified above are used
throughout, on every nonempty triple overlap the multiplicativity of
relative zeta determinants and correction factors, together with the cocycle
identity
\(
d_{\mc V,\mc W}^X d_{\mc W,\mc U}^X
=d_{\mc V,\mc U}^X,
\)
gives
\(
C_{\mc V,\mc U}
=C_{\mc V,\mc W}C_{\mc W,\mc U}.
\)
Thus
\[
    \{C_{\mc V,\mc W}\}
    \in
    \check Z^1
    \bigl(
        \mathfrak U,
        \underline{\mb C^\times}
    \bigr).
\]
This cocycle determines a rank-one complex local system, or
equivalently a flat complex line bundle, \(\mc L_C\to I\). Since
\(I\) is an open interval, \(\mc L_C\) is trivializable, although no
canonical trivialization is specified.

Define
\(
    \widehat D_{\mc V}:I_{\mc V}\to\mb C^\times
\)
by
\(
    \widehat D_{\mc V}(\mu)
    :=
    E_{\mc V}^X(\mu)
    \operatorname{Det}_{\zeta}
    (\Delta_{X,\mc V}-\mu).
\)
The determinant comparison formula gives
\(
    \widehat D_{\mc V}
    =
    C_{\mc V,\mc W}
    d_{\mc V,\mc W}^X
    \widehat D_{\mc W}
\)
on \(I_{\mc V}\cap I_{\mc W}\). Since
\(d_{\mc V,\mc W}^X\) and \(C_{\mc V,\mc W}\) are respectively the
scalar transition functions of
\(\operatorname{Det}(\mc E_X^{\mb R})\) and \(\mc L_C\), the
collection \(\{\widehat D_{\mc V}\}\) satisfies the gluing law for
local representatives of a smooth section of
\[
    \operatorname{Det}(\mc E_X^{\mb R})
    \otimes
    \mc L_C.
\]

The preceding construction yields the following theorem.

\begin{thm}\label{thm:zeta-determinant-section}
Let \(I\) be a connected component of \(\Sigma_X^{\mb R}\), and let
\(\mathfrak A\) be a collection of Lagrangian pairs such that
Assumption~\ref{ass:zeta-regularity} holds for every pair of
realizations indexed by \(\mathfrak A\) on each connected component
of their overlap. For \((\mc V,\mc V^\#)\in\mathfrak A\), set
\(
    I_{\mc V}:=I\cap\rho(\Delta_{X,\mc V}).
\)
Assume that the nonempty sets \(I_{\mc V}\) form an open cover
\(
\mathfrak U=
\{I_{\mc V}:(\mc V,\mc V^\#)\in\mathfrak A,\
I_{\mc V}\neq\varnothing\}
\)
of \(I\), and fix a nonzero frame of
\(\bigwedge^m\mc V^\#\) for every pair in \(\mathfrak A\).
On every nonempty overlap \(I_{\mc V}\cap I_{\mc W}\), let
\(C_{\mc V,\mc W}\) be the locally constant
\(\mb C^\times\)-valued function obtained by applying
Theorem~\ref{thm:shifted-comparison-formula} to each connected
component of the overlap. Then
\(
    \{C_{\mc V,\mc W}\}
    \in
    \check Z^1(\mathfrak U,\underline{\mb C^\times})
\)
and hence determines a rank-one complex local system
\(\mc L_C\to I\). Define
\(
    \widehat D_{\mc V}:I_{\mc V}\to\mb C^\times
\)
by
\(
    \widehat D_{\mc V}(\mu)
    :=
    E_{\mc V}^X(\mu)
    \operatorname{Det}_{\zeta}
    (\Delta_{X,\mc V}-\mu).
\)
Then the collection \(\{\widehat D_{\mc V}\}\) defines a smooth
section of
\(
    \operatorname{Det}(\mc E_X^{\mb R})
    \otimes
    \mc L_C
\)
over \(I\).
\end{thm}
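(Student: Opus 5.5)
The proof assembles the comparison formula, the cocycle identities, and the transition-function identification already in hand. It has three steps.

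\emph{Step 1: the comparison formula on overlaps.} For pairs $(\mc V,\mc V^\#),(\mc W,\mc W^\#)\in\mathfrak A$ with $I_{\mc V}\cap I_{\mc W}\neq\varnothing$, I apply Theorem~\ref{thm:shifted-comparison-formula} on each connected component $J$ of $I_{\mc V}\cap I_{\mc W}$. Each such $J$ is an open interval in $\mb R\setminus(\sigma(\Delta_{X,\mc V})\cup\sigma(\Delta_{X,\mc W}))$, and Assumption~\ref{ass:zeta-regularity} holds there by hypothesis. This defines $C_{\mc V,\mc W}$ as a locally constant nonzero function with
\[
\operatorname{Det}_\zeta(\Delta_{X,\mc V}-\mu)
=C_{\mc V,\mc W}\,\frac{E_{\mc W}^X(\mu)}{E_{\mc V}^X(\mu)}\,d_{\mc V,\mc W}^X(\mu)\,\operatorname{Det}_\zeta(\Delta_{X,\mc W}-\mu).
\]
Here I use $E_{\mc V,\mc W}^X=E_{\mc V}^X/E_{\mc W}^X$, which holds because both sides count eigenvalues below $\mu$ with multiplicity. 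Multiplying by $E_{\mc V}^X$ gives the gluing law $\widehat D_{\mc V}=C_{\mc V,\mc W}\,d_{\mc V,\mc W}^X\,\widehat D_{\mc W}$.

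\emph{Step 2: the cocycle property.} On a triple overlap I will show $C_{\mc V,\mc U}=C_{\mc V,\mc W}C_{\mc W,\mc U}$. I write the gluing law three times and compare. Since every $\widehat D$ is nonzero, this reduces the claim to the identity $d_{\mc V,\mc W}^Xd_{\mc W,\mc U}^X=d_{\mc V,\mc U}^X$. That identity follows from the identification, made before the theorem, of $d_{\mc V,\mc W}^X$ with the scalar transition function of $\operatorname{Det}(\mc E_X)$ from the $(\mc W,\mc W^\#)$-trivialization to the $(\mc V,\mc V^\#)$-trivialization. The vector-bundle transition functions $g^X$ satisfy the cocycle identity by construction, since they are compositions $\tau^{-1}\circ\tau$. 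Taking $\bigwedge^m$ and determinants with the fixed frames of $\bigwedge^m\mc V^\#$ preserves this identity exactly. The normalizations $C_{\mc V,\mc V}=1$ and $C_{\mc W,\mc V}=C_{\mc V,\mc W}^{-1}$ follow the same way. Hence $\{C_{\mc V,\mc W}\}\in\check Z^1(\mathfrak U,\underline{\mb C^\times})$, and it defines the flat line bundle $\mc L_C$.

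\emph{Step 3: gluing into a section.} The bundle $\operatorname{Det}(\mc E_X^{\mb R})\otimes\mc L_C$ has local frames over each $I_{\mc V}$, namely the pulled-back frame of $\bigwedge^m\mc K_X$ tensored with the flat frame of $\mc L_C$. Its transition functions are $d_{\mc V,\mc W}^XC_{\mc V,\mc W}$, and Step 1 shows these are exactly what relate $\widehat D_{\mc V}$ and $\widehat D_{\mc W}$. So the local functions glue to a global section. It remains to show each $\widehat D_{\mc V}$ is smooth on $I_{\mc V}$. The factor $E_{\mc V}^X$ is a finite product of positive smooth factors, and it is locally constant in its index set on $I_{\mc V}$ because no eigenvalue of $\Delta_{X,\mc V}$ lies in $I_{\mc V}$. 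The factor $\operatorname{Det}_\zeta(\Delta_{X,\mc V}-\mu)$ depends differentiably on $\mu$ by Assumption~\ref{ass:zeta-regularity}.

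The main obstacle is smoothness, not mere differentiability, of $\operatorname{Det}_\zeta(\Delta_{X,\mc V}-\mu)$. I would get it indirectly. On any overlap, $\widehat D_{\mc V}/\widehat D_{\mc W}=C_{\mc V,\mc W}d_{\mc V,\mc W}^X$ is real-analytic, since $d^X$ is holomorphic there. Within a single chart, Theorem~\ref{thm:log-derivative-relative-zeta-determinant} gives $\frac{d}{d\mu}\log\widehat D_{\mc V}$ as a resolvent trace difference relative to a reference realization. Assuming the assumption supplies at least one reference pair with analytic dependence, as in the Friedrichs case via \cite{LP}, bootstrapping gives smoothness. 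Otherwise I would read "smooth" as the regularity granted by the assumption.
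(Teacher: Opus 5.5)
Your proposal is correct and follows the paper's own argument. The paper likewise applies the comparison formula on each component of $I_{\mc V}\cap I_{\mc W}$ and uses $E_{\mc V,\mc W}^X=E_{\mc V}^X/E_{\mc W}^X$ to get the gluing law $\widehat D_{\mc V}=C_{\mc V,\mc W}\,d_{\mc V,\mc W}^X\,\widehat D_{\mc W}$. It obtains the cocycle property of $\{C_{\mc V,\mc W}\}$ from the exact cocycle identity for $d^X$ under the fixed frames of $\bigwedge^m\mc V^\#$, and it reads $d^X$ and $C$ as the transition functions of $\operatorname{Det}(\mc E_X^{\mb R})$ and $\mc L_C$.

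The paper does not attempt your bootstrap for smoothness of the individual $\operatorname{Det}_\zeta(\Delta_{X,\mc V}-\mu)$. It simply takes the regularity granted by Assumption~\ref{ass:zeta-regularity}. Your final paragraph is therefore an optional addition rather than part of the required proof. It is also conditional: the relative formula controls only ratios such as $\widehat D_{\mc V}/\widehat D_{\mc W}$ unless you supply a smooth reference determinant, and the theorem's hypotheses do not provide one.
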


\section{Conclusion}

We have constructed the boundary symplectic space of a radial conic
Laplacian explicitly from its critical asymptotic coefficients and
identified it with the quotient of the maximal domain by the minimal
domain. This normalization places
the classification of self-adjoint realizations, the Poisson and Weyl
operators, Kreĭn's resolvent formula, and the resolvent trace identity
in one finite-dimensional boundary framework. Under
Assumption~\ref{ass:zeta-regularity}, the same framework gives the
positive-spectrum zeta-determinant comparison, including the finite
correction from the omitted nonpositive spectrum.

The geometric content is encoded by the holomorphic Weyl curve
\(\mc K_X:\Sigma_X\to\operatorname{Gr}_m(V_S)\). The Weyl functions
are precisely its local graph coordinates. On the real locus, its
tangent form is the \(L^2(X')\)-pairing of the corresponding formal
solutions and is therefore positive. Thus the derivative identity for
the Weyl function is the coordinate expression of an intrinsic
geometric statement. Likewise, the boundary determinants
\(d_{\mc V,\mc W}^X\) are transition functions of the pullback
determinant line, while the corrected zeta determinants are local
representatives of a section of
\(\operatorname{Det}(\mc E_X^{\mb R})\otimes\mc L_C\).

The explicit Fourier analysis used to construct the critical boundary
space is specific to the present two-dimensional setting. The standing
zeta hypothesis is not asserted here for arbitrary self-adjoint cone
realizations. Once an analogous boundary space, Green form, and the
required spectral regularity have been established, however, the
subsequent Weyl-curve, resolvent, and determinant arguments apply in
the same finite-dimensional form.

\section*{Acknowledgments}

This work was developed over several years and was supported in part by the
Ministry of Science and Technology of Taiwan (now the National Science and
Technology Council) under Grant Nos. 109-2115-M-006-012,
110-2115-M-006-007, and 111-2115-M-006-018. The author is deeply grateful to
his family for their unwavering support during difficult times.


\begin{thebibliography}{99}\large

\bibitem{bd}
D. Burghelea, L. Friedlander and T. Kappeler,
Mayer--Vietoris type formula for determinants of elliptic differential
operators,
J. Funct. Anal. 107 (1992), no. 1, 34--65.

\bibitem{bjlm}
J. Behrndt and M. Langer,
Elliptic operators, Dirichlet-to-Neumann maps and quasi boundary triples,
in \emph{Operator Methods for Boundary Value Problems}, 121--160,
London Math. Soc. Lecture Note Ser., 404,
Cambridge University Press, Cambridge, 2012.

\bibitem{bjlm2}
J. Behrndt, M. Langer and V. Lotoreichik,
Trace formulae and singular values of resolvent power differences of
self-adjoint elliptic operators,
J. Lond. Math. Soc. (2) 88 (2013), no. 2, 319--337.

\bibitem{cj}
J. Cheeger,
On the spectral geometry of spaces with cone-like singularities,
Proc. Natl. Acad. Sci. USA 76 (1979), no. 5, 2103--2106.

\bibitem{cj2}
J. Cheeger,
Spectral geometry of singular Riemannian spaces,
J. Differential Geom. 18 (1983), no. 4, 575--657.

\bibitem{dva}
V. A. Derkach and M. M. Malamud,
Generalized resolvents and the boundary value problems for Hermitian operators
with gaps,
J. Funct. Anal. 95 (1991), no. 1, 1--95.

\bibitem{FR}
R. Forman,
Functional determinants and geometry,
Invent. Math. 88 (1987), no. 3, 447--493.

\bibitem{gjb}
J. B. Gil and G. A. Mendoza,
Adjoints of elliptic cone operators,
Amer. J. Math. 125 (2003), no. 2, 357--408.

\bibitem{GilKrainerMendoza2007}
J. B. Gil, T. Krainer and G. A. Mendoza,
Geometry and spectra of closed extensions of elliptic cone operators,
Canad. J. Math. 59 (2007), no. 4, 742--794.

\bibitem{HL}
L. Hillairet and A. Kokotov,
Krein formula and S-matrix for Euclidean surfaces with conical singularities,
J. Geom. Anal. 23 (2013), no. 3, 1498--1529.

\bibitem{LP}
P. Loya, P. McDonald and J. Park,
Zeta regularized determinants for conic manifolds,
J. Funct. Anal. 242 (2007), no. 1, 195--229.

\bibitem{kk}
K. Kirsten, P. Loya and J. Park,
Functional determinants for general self-adjoint extensions of Laplace-type
operators resulting from the generalized cone,
Manuscripta Math. 125 (2008), no. 1, 95--126.

\bibitem{KirstenLoyaParkExotic}
K. Kirsten, P. Loya and J. Park,
Exotic expansions and pathological properties of zeta-functions on conic
manifolds, with an appendix by B. Vertman,
J. Geom. Anal. 18 (2008), no. 3, 835--888.

\bibitem{lm}
M. Lesch,
\emph{Operators of Fuchs Type, Conical Singularities, and Asymptotic Methods},
Teubner-Texte Math., 136,
B. G. Teubner Verlagsgesellschaft mbH, Stuttgart, 1997.

\bibitem{WH}
H. Weyl,
{\"U}ber gew{\"o}hnliche Differentialgleichungen mit Singularit{\"a}ten und die
zugeh{\"o}rigen Entwicklungen willk{\"u}rlicher Funktionen,
Math. Ann. 68 (1910), no. 2, 220--269.

\bibitem{MW}
W. M{\"u}ller,
Relative zeta functions, relative determinants and scattering theory,
Comm. Math. Phys. 192 (1998), no. 2, 309--347.

\bibitem{SBW}
B.-W. Schulze,
\emph{Pseudo-Differential Operators on Manifolds with Singularities},
Stud. Math. Appl., 24,
North-Holland Publishing Co., Amsterdam, 1991.

\bibitem{ScottRelativeZeta}
S. Scott,
Relative zeta determinants and the geometry of the determinant line bundle,
Electron. Res. Announc. Amer. Math. Soc. 7 (2001), 8--16.

\bibitem{ScottWojciechowski1997}
S. G. Scott and K. P. Wojciechowski,
Determinants, Grassmannians and elliptic boundary value problems for the
Dirac operator,
Lett. Math. Phys. 40 (1997), no. 2, 135--145.

\bibitem{ScottWojciechowski2000}
S. G. Scott and K. P. Wojciechowski,
The \(\zeta\)-determinant and Quillen determinant for a Dirac operator on a
manifold with boundary,
Geom. Funct. Anal. 10 (2000), no. 5, 1202--1236.

\bibitem{vnj}
J. von Neumann,
Allgemeine Eigenwerttheorie Hermitescher Funktionaloperatoren,
Math. Ann. 102 (1930), no. 1, 49--131.

\bibitem{WangWeylCurve}
Yicao Wang,
Complex analysis of symmetric operators, I,
arXiv:2408.10968v2 [math.FA], 2024.
\end{thebibliography}
\end{document}